\documentclass[reqno, 11pt]{amsart} 
\usepackage{amsfonts, amsmath, amssymb, amsthm}
\usepackage{mathrsfs}

\usepackage{amsfonts,amsmath,latexsym,verbatim,amscd,mathrsfs,color,array}
\usepackage[colorlinks=true]{hyperref}
\usepackage{amsmath,amssymb,amsthm,amsfonts,graphicx,color}
\usepackage{amssymb}
\usepackage{pdfsync}
\usepackage{epstopdf}
\usepackage{cite}
\usepackage{graphicx}
\usepackage{amsmath}
\usepackage{amsfonts}%
\providecommand{\U}[1]{\protect\rule{.1in}{.1in}}

\numberwithin{equation}{section}

\newtheorem{theorem}{Theorem}[section]

\newtheorem{proposition}[theorem]{Proposition}

\newtheorem{remark}[theorem]{Remark}

\newtheorem{lemma}[theorem]{Lemma}

\begin{document}

	\title[Concentrated solutions to fractional]{Concentrated solutions to fractional Schr\"{o}dinger-Poisson system with non-homogeneous potentials}
	
	\author{Lintao Liu}
	\address[Lintao Liu]{Department of Mathematics, North University of China, Taiyuan 030051, Shanxi, PR China}
	\email{liulintao\_math@nuc.edu.cn}

	\author{Haidong Yang}
	\address[Haidong Yang]{School of Mathematics and Statistics, Central South
		University, Changsha, Hunan, PR China}
	\email{hdyang@csu.edu.cn}

	\maketitle

	\begin{abstract}
		This paper mainly investigates several limit properties of normalized solutions for the fractional Schr\"{o}dinger-Poisson system, including existence, concentration behaviors and local uniqueness. It is worth noting that our results on the existence and asymptotic behaviors of normalized solutions are obtained in a doubly nonlocal setting and without assuming homogeneity of the potential, which generalize the results in \cite{GDCDS} in several aspects and improve our previous work in \cite{LIUYANG}. Meanwhile, some precise properties of solution sequence such as energy estimates, decay estimates and uniform regularity are also established by introducing some new techniques.
		
	\end{abstract}
	\textbf{Keywords:} Concentrated solutions; Fractional Schr\"{o}dinger-Poisson system; Non-homogeneous.
	
	\textbf{2020 Mathematics Subject Classification:} 35A02, 39A14.

	\section{Introduction}
	In this paper, we study the solutions with prescribed mass for the following fractional Schr\"{o}dinger-Poisson system
	\begin{equation}\label{equ1-1}
		\left\{
		\begin{array}{ll}
			i\frac{\partial\psi }{\partial t}=(-\Delta)^{s}\psi+V(x)\psi-\phi \psi-a^{p-2}|\psi|^{p-2}\psi & \hbox{in $\mathbb{R}^{3}$},\\
			(-\Delta)^{s}\phi=\psi ^{2}& \hbox{in $\mathbb{R}^{3}$},
		\end{array}
		\right.
	\end{equation}
	where $\psi: \mathbb{R}^{3}\times\mathbb{R}\rightarrow\mathbb{C}$  is the time dependent wave function, $s\in (\frac{3}{4}, 1)$, $a>0$ and $2<p<2+\frac{4s}{3}$. We seek standing wave solutions to \eqref{equ1-1}, which are given by $\left(\psi(x, t)=e^{-i\mu t}u(x),  \phi(x)\right) $ for $\mu\in \mathbb{R}$. The pair $(u(x), \phi (x))$ thus satisfies the following equation
	\begin{equation}\label{equ1}
		\left\{
		\begin{array}{ll}
			(-\Delta)^{s}u+V(x)u=\mu u+\phi u+a^{p-2}|u|^{p-2}u& \hbox{in $\mathbb{R}^{3}$},\\
			(-\Delta)^{s}\phi=u^{2}& \hbox{in $\mathbb{R}^{3}$},
		\end{array}
		\right.
	\end{equation}
	where $\mu\in \mathbb{R}$ is the corresponding Lagrange multiplier. $(-\Delta)^{s}$ is a nonlocal operator defined by
	\begin{equation*}
		(-\Delta)^{s}u(x):=C_{s}P.V.\int_{\mathbb{R}^{3}}\frac{u(x)-u(y)}{|x-y|^{3+2s}}dy,
	\end{equation*}
	and the symbol $P.V.$ stands for the Cauchy principal value of the singular integral, $C_{s}$ is a suitable normalization constant. Laskin \cite{LP,LPR} originally proposed a method that generalizes the Feynman path integral from Brownian-like quantum paths to L\'{e}vy-like ones, and this generalization has practical implications for the energy spectrum of hydrogen-like atoms, such as the fractional Bohr atom. It is a well-established fact that system \eqref{equ1} holds profound physical significance, as it arises naturally in the context of semiconductor theory. Specifically, in \eqref{equ1}, the first equation is a nonlinear stationary equation, whose nonlinear term models the inter-particle interactions of many-body systems, and this equation is coupled with the Poisson equation and must be satisfied by the function $\phi$. This dependency implies that the potential is determined by the charge density of the wave function \cite{M}.
	
	Over the past few years, numerous scholars have extensively studied the existence and multiplicity of positive solutions, ground state solutions, sign-changing solutions and semiclassical states for the fractional Schr\"{o}dinger-Poisson system \eqref{equ1}, and have developed several effective methods for handling equations or systems containing local terms, such as the perturbation approach, quantitative deformation lemma, global compactness lemma, Pohozaev–Nehari manifold, approximation method and Ljusternik–Schnirelmann theory. We refer the readers to see \cite{DH,JC,MM,TTT,TTTT,TZ} and the references therein for more details. 
	
	It is noted that the research findings of scholars in the aforementioned studies are mainly obtained without imposing any constraints on the $L^{2}$-norm. However, in this paper, we focus more on the existence, asymptotic behavior and uniqueness of the normalized solutions for the fractional Schr\"{o}dinger-Poisson system \eqref{equ1}. Based on the current literature, only a few studies have addressed this type of problem. For instance, Meng and He \cite{MHCV} studied the existence and properties of ground states for the fractional
	Schr\"{o}dinger-Poisson system with combined power nonlinearities
	\begin{equation*}
		\left\{
		\begin{array}{ll}
			(-\Delta)^{s}u-\phi |u|^{2_{s}^{\ast}-3}=\lambda u+\mu |u|^{q-2}u+|u|^{2_{s}^{\ast}-2}u& \hbox{in $\mathbb{R}^{3}$},\\
			(-\Delta)^{s}\phi=u^{2_{s}^{\ast}-1}& \hbox{in $\mathbb{R}^{3}$},
		\end{array}
		\right.
	\end{equation*}
	where $\mu >0$ is a parameter, $2<q<2_{s}^{\ast}:=\frac{6}{3-2s}$ and $\lambda\in \mathbb{R}$ appears as a Lagrange multiplier. For the three perturbation scenarios of $L^{2}$-subcritical, $L^{2}$-critical and $L^{2}$-supercritical, the authors  proved several existence and non-existence results. Furthermore, they also studied the qualitative behavior of the ground states as $\mu\rightarrow0^{+}$. Later, He et al. \cite{HCV,HJGA} considered the existence of normalized solutions for the fractional critical Schr\"{o}dinger-Poisson system
	\begin{equation*}
		\left\{
		\begin{array}{ll}
			(-\Delta)^{s}u+\lambda\phi u=\alpha u+\mu |u|^{q-2}u+|u|^{2_{s}^{\ast}-2}u& \hbox{in $\mathbb{R}^{3}$},\\
			(-\Delta)^{t}\phi=u^{2}& \hbox{in $\mathbb{R}^{3}$},
		\end{array}
		\right.
	\end{equation*}
	where $s, t\in (0, 1)$ satisfy $2s+2t>3$, $q\in (2, 2_{s}^{\ast})$, $\lambda, \mu>0$ are parameters and $\alpha\in \mathbb{R}$ is an undetermined parameter. Under the subcritical perturbation,  the authors obtained the existence of multiple normalized solutions by means of the truncation technique, concentration-compactness principle and the genus theory. In the critical regime, they proved the existence results by applying the Jeanjean theory, Pohozaev manifold method and Brezis-Nirenberg technique. In the supercritical perturbation, they proved two different results for normalized solutions when parameters satisfy different assumptions, by employing the constrained variational methods and the mountain pass theorem.
	These results comprehensively address the existence of normalized solutions for the fractional Schr\"{o}dinger-Poisson system in the autonomous case, with no existing results on the concentration behavior and local uniqueness of normalized solutions in the non-autonomous case. 
	
	When $\phi(x)=0$ and $s\in (0, 1)$, system \eqref{equ1} reduces to a fractional Schr\"{o}dinger equation. In \cite{H}, He and Long studied the following fractional Schr\"{o}dinger equation
	\begin{equation*}
		(-\Delta)^{s}u+V(x)u=a|u|^{\frac{4s}{N}}u\quad \hbox{in $\mathbb{R}^{N}$},
	\end{equation*}
	where $N\geq1$, $a\in\mathbb{R}$ and $V(x)$ is a measurable function. The authors proved the existence or nonexistence of ground states under the $L^{2}$-constraint and certain assumptions on $V(x)$ and $a$. In addition, they analyzed the  behavior of ground states as $a$ tends to a fixed constant. Du et al. \cite{D} studied the existence, nonexistence and mass concentration of normalized solutions for nonlinear fractional Schr\"{o}dinger equations
	\begin{equation*}
		(-\Delta)^{s}u+V(x)u=\mu u+f(u)\quad \hbox{in $\mathbb{R}^{N}$},
	\end{equation*}
	where $N\geq2$, $V(x)$ is an external potential function,  $\mu\in\mathbb{R}$ and $a>0$ are parameters, and $f$ is a subcritical nonlinearity. By deriving the exact expression of the optimal embedding constant in the fractional Gagliardo–Nirenberg–Sobolev inequality, they established the existence and nonexistence of normalized solutions to the equation. Furthermore, the authors obtained the concentration behavior of normalized solutions under the mass critical case by using direct energy estimates. Li et al. \cite{LLZ} investigated the existence and blow-up profile of normalized solutions to the fractional nonlinear Schr\"{o}dinger equation
	\begin{equation*}
		(-\Delta)^{s}u+V(x)u+\lambda u=|u|^{\frac{4s}{N}}u\quad \hbox{in $\mathbb{R}^{N}$},
	\end{equation*}
	with $N\geq2$, $\lambda\in \mathbb{R}$ and negative potentials $V(x)$. The authors proved the existence and nonexistence of normalized solutions under negative potentials $V(x)$. Moreover, they also obtained nonexistence results and analyzed the asymptotic behavior of minimizers under two types of potentials: one is a bounded potential, and the other is a singular potential. Their study provides precise estimates of the energy during the convergence of minimizers and sharp decay rates of blow-up solutions. It is worth noting that the results mentioned above are primarily extensions and generalizations derived from the findings in \cite{GLMP,GAH}.
	
	When $\phi(x)=0$ and $s=1$, system \eqref{equ1} reduces to the Schr\"{o}dinger equation. The concentration behavior and uniqueness of the normalized solutions to such problems have been thoroughly investigated, with a wealth of results documented in the literature.  For instance, Guo and his collaborators have studied the following Gross-Pitaevskii equation
	\begin{equation}\label{equ01}
		-\Delta u+V(x)u=\mu u+af(x, u)\quad \hbox{in $\mathbb{R}^{N}$},
	\end{equation}
	where $a>0$ describes the strength of the attractive interactions. In \cite{GLMP,GSIM}, Guo et al. first studied the concentration behavior and local uniqueness of normalized solutions to equation \eqref{equ01} when $N=2$ and $f(x, u)=|u|^{2}u$ by applying energy estimates and blow-up analysis. Additionally, they studied the concentration and blow-up phenomena of normalized solutions to \eqref{equ01} under the conditions of ring-shaped trapping potentials \cite{GAH} and multi-well potentials \cite{GNA}. When $N\geq1$ and $f(x, u)=|u|^{p-1}u$ wtith $p\in(1, 1+\frac{4}{N})$, Li and Zhu \cite{LZ} proved the mass concentration and local uniqueness of ground states for problem \eqref{equ01}, which extends the concentration results shown in \cite{GLMP,GSIM}. Combining blow-up and the constraint variational method, Deng et al. \cite{DC,DJ} proved the existence and asymptotical behavior of ground state to equation \eqref{equ01} when $N=2$ and $f(x, u)=m(x)|u|^{2}u$ with $0<m(x)\leq1$. For the asymptotic behavior of solutions, we also refer the reader to \cite{CWJMPA,LWZ,WWMZ}. Recent studies have yielded several results on the planar Schr\"{o}dinger-Poisson system. In \cite{GJDE}, Guo et al. considered the following planar Schr\"{o}dinger-Poisson system
	\begin{equation}\label{equ02}
		-\Delta u+(V(x)-\mu) u+\gamma\left[ \Phi_{N}\ast u^{2}\right]u=|u|^{p-2}u\quad \hbox{in $\mathbb{R}^{N}$},
	\end{equation}
	where
	\begin{equation*}
		\Phi_{N}(x):=
		\left\{
		\begin{array}{ll}
			\frac{1}{N(2-N)\mathit{w}_{N}} |x|^{2-N}&\quad \hbox{$N\geq3$},\\
			\frac{1}{2\pi}\ln |x|&\quad \hbox{$N=2$},
		\end{array}
		\right.
	\end{equation*}
	and $\mathit{w}_{N}>0$ denotes the volume of the unit ball in $\mathbb{R}^{N}$. By overcoming the sign-changing property of the logarithmic convolution potential and the non-invariance under translations of the logarithmic external potential, the authors proved the existence and uniqueness of constraint minimizers for system \eqref{equ02} with $V(x)=\ln (1+|x|^{2})$, $N=2$, $p=4$ and $\gamma=2\pi$. Following this, Wang and Zhang \cite{WZDCDS} generalize their results to the mass subcritical regime. Liu and Zhang \cite{LZAFA} studied the limiting behavior and local uniqueness of normalized solutions to \eqref{equ02} when $V(x)=|x|^{2}$, $N=2$ and $p=4$ by overcoming the non-invariance under translations of the harmonic potential.

	Following a literature review, the existence, concentration behavior and local uniqueness of normalized solutions for non-autonomous fractional Schr\"{o}dinger-Poisson systems have received little attention in existing literature. Even for the existing results on classical elliptic equations, they are mainly obtained under the condition of unbounded harmonic potentials.
	{\bf Therefore, it is nature to ask whether the normalized solutions of \eqref{equ1} admit similar limit properties when potential function is bounded and lack of homogeneity.} Using the Lax-Milgram theorem, the fractional Schr\"{o}dinger-Poisson system \eqref{equ1} can be transformed into a single fractional Schr\"{o}dinger equation with a nonlocal term. Indeed, given any fixed $u\in H^{s}(\mathbb{R}^{3})$, the Poisson equation $(-\Delta)^{s}\phi=u^{2}$ possesses a unique weak solution $\phi_{u}\in \mathcal{D}^{s, 2}(\mathbb{R}^{3})$ and $\phi_{u}$ can be expressed as
	\begin{equation}\label{equ03}
		\phi_{u}(x)=c_{s}\int_{\mathbb{R}^{3}}\frac{|u(y)|^{2}}{|x-y|^{3-2s}}dy\quad \hbox{with}\quad c_{s}=\pi^{-\frac{3}{2}}2^{-2s}\frac{\Gamma\left( \frac{3-2s}{2}\right) }{\Gamma(s)}.
	\end{equation}
	In the sequel, we often omit the constant $c_{s}$ for convenience. Substituting \eqref{equ03} into \eqref{equ1}, it reduces to a single fractional Schr\"{o}dinger equation
	\begin{equation}\label{equ04}
		(-\Delta)^{s}u+V(x)u=\mu u+\phi_{u} u+a^{p-2}|u|^{p-2}u\quad \hbox{in $\mathbb{R}^{3}$},
	\end{equation}
	where $s\in (\frac{3}{4}, 1)$, $a>0$ and $2<p<2+\frac{4s}{3}$. The main purpose of this paper is to focus on the existence and asymptotic behavior of normalized solutions to problem \eqref{equ04}. Concretely speaking, for a properly chosen Lagrange multiplier $\mu$, the ground states of \eqref{equ04} are equivalently the minimizers of the following minimization problem
	\begin{equation}\label{equ2}
		e_{m}(a):=\inf_{u\in S_{m}}E_{a}(u) \quad \hbox{$a>0$},
	\end{equation}
	where $S_{m}:=\{u\in H^{s}(\mathbb{R}^{3}): \|u\|_{2}^{2}=m\}$ and  energy functional $E_{a}(u)$ is given by
	\begin{equation}\label{equ3}
		E_{a}(u):=\int_{\mathbb{R}^{3}}(|(-\Delta)^{\frac{s}{2}} u|^{2}+V(x)|u|^{2})dx-\frac{1}{2}\int_{\mathbb{R}^{3}}\int_{\mathbb{R}^{3}}\frac{|u(x)|^{2}|u(y)|^{2}}{|x-y|^{3-2s}}dxdy-\frac{2a^{p-2}}{p}\int_{\mathbb{R}^{3}}|u|^{p}dx.
	\end{equation}	
Now we summarize the first main result of this paper. Assume that\\

$\mathbf{(V_1)}$\quad $V(x)\in L^{\infty}(\mathbb{R}^{3})\cap C^{\alpha}(\mathbb{R}^{3})$ with $\alpha\in (0, 1)$, and $V_{\infty}:=\lim\limits_{|x|\rightarrow\infty}V(x)=\sup\limits_{x \in \mathbb{R}^3}V(x)$.
	\begin{theorem}\label{the1}
		Suppose $V(x)$ satisfies that $\mathbf{(V_1)}$,
	then $e_{m}(a)$ admits at least one minimiser for any $m>0$.
	\end{theorem}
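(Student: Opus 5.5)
We argue by the direct method combined with Lions' concentration--compactness. The potential is bounded rather than coercive, so compactness of minimizing sequences has to come from a strict energy comparison with the problem at infinity.

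\textbf{Coercivity.} We first show that $e_m(a)$ is finite and that minimizing sequences are bounded in $H^s(\mathbb{R}^3)$. For the Hartree term we use Hardy--Littlewood--Sobolev: since $s>\frac34$ we have $\frac{6}{3+2s}\in(2,2_s^*)$, and then fractional Gagliardo--Nirenberg gives
\[
\int\int\frac{|u(x)|^2|u(y)|^2}{|x-y|^{3-2s}}\,dx\,dy\le C\|u\|_{\frac{12}{3+2s}}^4\le C\,m^{\frac{4s-1}{2s}}\|(-\Delta)^{s/2}u\|_2^{\frac{3-2s}{s}} .
\]
The exponent $\frac{3-2s}{s}<2$. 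Likewise $\|u\|_p^p\le C m^{\cdot}\|(-\Delta)^{s/2}u\|_2^{\frac{3(p-2)}{2s}}$, and here $\frac{3(p-2)}{2s}<2$ because $p<2+\frac{4s}{3}$. Since $V\in L^\infty$, we obtain
\[
E_a(u)\ge \|(-\Delta)^{s/2}u\|_2^2-\|V\|_\infty m-C\|(-\Delta)^{s/2}u\|_2^{\theta_1}-C\|(-\Delta)^{s/2}u\|_2^{\theta_2},\qquad \theta_i<2,
\]
so $e_m(a)>-\infty$ and minimizing sequences are bounded.

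\textbf{Problem at infinity.} Let $e_m^\infty(a)$ denote the same minimization with $V$ replaced by the constant $V_\infty$. Then $e_m^\infty(a)=V_\infty m+I_m$, where $I_m$ is the autonomous level. The scaling $u_t=t^{3/2}u(t\,\cdot)$ keeps the mass and makes all three nonpositive terms of strictly lower homogeneity than the kinetic term, so $I_m<0$.

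The amplification $u\mapsto\sqrt{\theta}\,u$ gives the standard strict subadditivity $I_{\theta m}<\theta I_m$ for $\theta>1$. This works because the Hartree term scales as $\theta^2$ and the $L^p$ term as $\theta^{p/2}$, both strictly faster than $\theta$, and the minimizing sequence for $I_m$ has these terms bounded below (they cannot vanish since $I_m<0$). Consequently $I_m<I_\alpha+I_{m-\alpha}$ for $0<\alpha<m$, and by translation invariance $I_m$ is attained by some $w$.

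\textbf{Strict comparison.} This is the key step: we show $e_m(a)<e_m^\infty(a)$. Since $V\le V_\infty$ we have $E_a(w)\le e_m^\infty(a)$, so $e_m(a)\le e_m^\infty(a)$. If $V\not\equiv V_\infty$ on a set of positive measure, strict inequality follows by using the minimizer $w$, which is positive. Positivity holds after replacing $w$ by $|w|$, using $\|(-\Delta)^{s/2}|w|\|_2\le\|(-\Delta)^{s/2}w\|_2$ together with the strong maximum principle for the Euler--Lagrange equation. Then
\[
\int (V-V_\infty)w^2<0 .
\]
If $V\equiv V_\infty$, the problem is autonomous and already solved.

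\textbf{Conclusion.} Take a minimizing sequence $u_n$, which is bounded, and pass to $u_n\rightharpoonup u$ in $H^s$. We use the Brezis--Lieb splitting for the kinetic, $L^p$ and Hartree terms; the nonlocal Brezis--Lieb lemma applies thanks to the HLS bound above. For the potential term, $\int V|u_n-u|^2-V_\infty\|u_n-u\|_2^2\to0$ because $V\to V_\infty$ at infinity and $u_n-u\to0$ in $L^2_{\mathrm{loc}}$. This gives
\[
e_m(a)\ge E_a(u)+e^\infty_{m-\|u\|_2^2}(a)+o(1),
\]
with the convention $e_0^\infty=0$.

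If $u=0$, then $e_m(a)\ge e_m^\infty(a)$, contradicting the strict comparison. If $0<\|u\|_2^2=\alpha<m$, we combine $E_a(u)\ge e_\alpha(a)$ with a version of strict subadditivity for $e$. Namely, $e_m(a)<e_\alpha(a)+e_{m-\alpha}^\infty(a)$, which follows from $e_{\theta\alpha}<\theta e_\alpha$ by the same amplification argument, together with $e_\beta\le e_\beta^\infty$. This again gives a contradiction. Hence $\|u\|_2^2=m$, so $u_n\to u$ in $L^2$ and, by interpolation, in $L^p$ and in the Hartree term. Weak lower semicontinuity of the kinetic part then shows that $u$ is a minimizer.

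The main obstacle is the dichotomy exclusion. It needs strict subadditivity that mixes the $V$-problem with the problem at infinity, and the strict inequality $e_m<e_m^\infty$, which requires positivity of the autonomous minimizer in the fractional nonlocal setting.
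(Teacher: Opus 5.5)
Your proof is correct, but it is organized differently from the paper's. The paper applies Lions' vanishing/dichotomy/compactness trichotomy twice. First it treats the autonomous functional: vanishing is excluded by a negative-energy trial function, dichotomy by $e^0_{\theta\lambda}(a)<\theta e^0_\lambda(a)$, and a separate lemma gives continuity of $m\mapsto e^0_m(a)$. Then it treats $E_a$: vanishing is excluded by $e_m(a)<e^\infty_m(a)$, dichotomy is dismissed by referring back to the autonomous case, and in the compactness alternative an extra step shows the translations $x_n$ stay bounded, again by comparison with $e^\infty_m(a)$.

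You instead take the weak limit of the untranslated minimizing sequence and split $E_a(u_n)=E_a(u)+E^\infty_a(u_n-u)+o(1)$. This reduces vanishing, escape of mass to infinity and partial loss of mass to two strict inequalities: $e_m(a)<e^\infty_m(a)$ and $e_m(a)<e_\alpha(a)+e^\infty_{m-\alpha}(a)$. This is Lions' original formulation with a problem at infinity, and it buys three things:
\begin{itemize}
\item it removes the bookkeeping of translations;
\item it makes explicit the mixed inequality with $e^\infty$ that the paper's ``same argument as in Claim II'' leaves implicit;
\item it justifies the strictness of $e_m(a)<e^\infty_m(a)$ via positivity of the autonomous minimizer, which the paper only asserts.
\end{itemize}
The price is the nonlocal Brezis--Lieb lemma for the Hartree term. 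You also need, to pass from $E^\infty_a(u_n-u)$ to $e^\infty_{m-\alpha}(a)$, either continuity of $\beta\mapsto e^\infty_\beta(a)$ (the analogue of Lemma~\ref{lem2-1}) or a rescaling of $u_n-u$ to mass exactly $m-\alpha$. You should state this.

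Two small repairs are needed. First, the strict amplification $e_{\theta\alpha}(a)<\theta e_\alpha(a)$ for the $V$-problem is not literally ``the same'' as for $I_m$. You must add the following: if the nonlinear terms of a minimizing sequence tended to zero, then $\|v_n\|_p\to0$ forces local vanishing. Hence $\int Vv_n^2\to V_\infty\alpha$ and $e_\alpha(a)\ge V_\infty\alpha>e^\infty_\alpha(a)\ge e_\alpha(a)$, a contradiction. Alternatively, put the strictness on the autonomous side, using only $I_{\theta\beta}<\theta I_\beta$: $\tfrac{\alpha}{m}e_m(a)\le e_\alpha(a)$ and $\tfrac{m-\alpha}{m}e_m(a)\le\tfrac{m-\alpha}{m}e^\infty_m(a)<e^\infty_{m-\alpha}(a)$.

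Second, in the coercivity step the relevant exponent is $\frac{12}{3+2s}\in(2,2_s^*)$, which holds for all $s>\frac12$; it is not $\frac{6}{3+2s}$, which is less than $2$. The hypothesis $s>\frac34$ is what makes $\frac{3-2s}{s}<2$, and the mass factor should be $m^{\frac{6s-3}{2s}}$.
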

	\begin{remark}
	\rm $(1)$ In order to prove Theorem \ref{the1}, we first recall a fractional Gagliardo-Nirenberg inequality (cf. \cite[Lemma 2.1]{H})
	\begin{equation}\label{equ4}
		\int_{\mathbb{R}^{3}}|u|^{p}dx\leq C_{opt}\left(\int_{\mathbb{R}^{3}}|(-\Delta)^{\frac{s}{2}} u|^{2}dx\right)^{\frac{3(p-2)}{4s}}\left(\int_{\mathbb{R}^{3}}|u|^{2}dx\right)^{\frac{p}{2}-\frac{3p-6}{4s}}
	\end{equation}
	with
	\begin{equation*}
		C_{opt}=\frac{2sp}{6-p(3-2s)}\left( \frac{6-p(3-2s)}{3p-6}\right)^{\frac{3(p-2)}{4s}} \frac{1}{\|Q\|_{2}^{p-2}},
	\end{equation*}
	where $Q(x)=Q(|x|)$ is the unique (up to translations) positive solution of the following nonlinear equation
	\begin{equation}\label{equ5}
		(-\Delta)^{s}u+u-u^{p-1}=0 \quad \hbox{in $\mathbb{R}^3$}.
	\end{equation}	
	$(2)$ If $u$ is a minimizer of \eqref{equ2}, then we can assume that $u$ is nonnegative, due to the fact that $E_{a}(u)\geq E_{a}(|u|)$. Consequently, without loss of generality, we can restrict the minimization to nonnegative functions. Unlike the fractional Schr\"{o}dinger-Poisson system studied in \cite{MHCV}, the potential $V(x)$ in problem \eqref{equ04} seems to render the existence of normalized solutions more intricate. In contrast to the coercive potentials considered in \cite{D,GLMP,GSIM,H}, which ensure the validity of the embedding, this property, combined with constrained variational methods, directly yields the existence of normalized solutions. Conversely, for general bounded potentials, the compact embedding property is not guaranteed, and thus the concentration-compactness lemma must be invoked in this scenario.
\end{remark}
	
	We next focus on the concentration behaviour of nonnegative
	minimizers for \eqref{equ2} as $a\rightarrow\infty$. For simplicity, we assume $m=1$. Inspired by \cite{GSIM,LZ}, we assume that \\
	
	$\mathbf{(V_2)}$\quad $V(x)$ is almost homogeneous of some degree $r_{i}>0$ around $x_{i}$, where $x_{i}$ is the global minimum points of $V(x)$ satisfying
	\begin{equation*}
		Z:=\left\lbrace x\in\mathbb{R}^{3}: V(x)=0\right\rbrace=\left\lbrace x_{1}, x_{2},...,x_{n}\right\rbrace \quad \hbox{$n\geq1$},
	\end{equation*}
	and there exist $V_{i}(x)\in C_{loc}^{2}(\mathbb{R}^{3})$ is homogeneous of degree $r_{i}>0$, such that
	\begin{equation}\label{equ9}
		\frac{V(x+x_{i})}{V_{i}(x)}\rightarrow1 \quad \hbox{as $|x|\rightarrow0$ for $i=1, 2,...,n $}.
	\end{equation}
Moreover, we define
	\begin{equation}\label{equ10}
		H_{i}(y):=	\int_{\mathbb{R}^{3}}V_{i}(x+y)Q^{2}(x)dx\quad \hbox{$i=1, 2,...,n$}.
	\end{equation}
	Let
	\begin{equation}\label{equ11}
		r:=\max_{1\leq i\leq m}r_{i}, \quad  \overline{Z}:=\left\lbrace x_{i}\in Z: r_{i}=r\right\rbrace \subset Z,
	\end{equation}
	and
	\begin{equation}\label{equ12}
		\bar{\lambda}_{0}:=\min_{i\in \Lambda}\bar{\lambda}_{i} \quad \mathrm {where} \quad \bar{\lambda}_{i}:=\min_{y\in\mathbb{R}^{3}}H_{i}(y)\quad \mathrm {and} \quad\Lambda:=\left\lbrace i: x_{i}\in\overline{Z}\right\rbrace.
	\end{equation}
	To further streamline the presentation of our subsequent results, we now introduce some useful notation.
	\begin{equation}\label{equ13}
		H(y):=	\int_{\mathbb{R}^{3}}V_{0}(x+y)Q^{2}(x)dx\quad \mathrm {where} \quad V_{0}(x):=V_{i}(x) \quad \mathrm {and }~~i~~\mathrm {satisfying }~~ \bar{\lambda}_{i}=\bar{\lambda}_{0},
	\end{equation}
	and
	\begin{equation}\label{equ14}
		Z_{0}:=\left\lbrace x_{i}\in\overline{Z}: \bar{\lambda}_{i}=\bar{\lambda}_{0}\right\rbrace,\quad K_{0}:=\left\lbrace y: H(y)= \bar{\lambda}_{i}=\bar{\lambda}_{0}\right\rbrace.
	\end{equation}
	The result of the concentration of minimizers as $a\rightarrow\infty$ is stated as follows. 
	
	\begin{theorem}\label{the2}
		Assume that $V(x)$ satisfies $\mathbf{(V_1)}$ and $\mathbf{(V_2)}$, let $u_{k}$ be a nonnegative minimizer of $e_{1}(a_{k})$ with $a_{k}\rightarrow\infty$ as $k\rightarrow\infty$. Then there exists a subsequence of $\{u_{k}\}$, still denoted by $\{u_{k}\}$, such that $u_{k}$ satisfies
		\begin{equation}\label{equ15}
			\lim_{k\rightarrow\infty}\varepsilon_{k}^{\frac{3}{2}}u_{k}(\varepsilon_{k}x+x_{k})=\frac{Q(x)}{\sqrt{a^{\ast}}}\quad \hbox{in $H^{s}(\mathbb{R}^{3})\cap L^{\infty}(\mathbb{R}^{3})$},
		\end{equation}
		where $\varepsilon_{k}:=\left(\frac{a_{k}}{\sqrt{a^{\ast}}}\right)^{\frac{2p-4}{3p-6-4s}}$, $a^{\ast}:=\|Q\|_{2}^{2}$ and $Q$ is the unique positive solution of \eqref{equ5}. Moreover, $x_{k}$ is the unique global maximal point of $u_{k}$ satisfying
		\begin{equation}\label{equ16}
			\lim\limits_{k\rightarrow \infty}x_{k}=x_{0}\quad \hbox{with $V(x_{0})=0$},
		\end{equation}
		and
		\begin{equation}\label{equ17}
			\lim\limits_{k\rightarrow \infty}\frac{x_{k}-x_{0}}{\varepsilon_{k}}=y_{0} \quad \hbox{for some $x_{0}\in Z_{0}$ and $y_{0}\in K_{0}$}.
		\end{equation}
	\end{theorem}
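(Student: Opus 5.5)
The plan is a blow-up analysis in the spirit of \cite{GSIM,LZ}. It has three ingredients: sharp two-sided energy estimates for $e_{1}(a_{k})$, a rescaling argument that identifies the profile $Q/\sqrt{a^{\ast}}$, and a second-order energy expansion that locates the concentration point.

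\textbf{Scaling and first-order energy.} Under $u\mapsto \varepsilon^{-3/2}w(\cdot/\varepsilon)$ the three terms of $E_{a}$ scale as follows:
\begin{itemize}
\item the kinetic term scales like $\varepsilon^{-2s}$;
\item the $L^{p}$ term scales like $a^{p-2}\varepsilon^{-3(p-2)/2}$, which is also of order $\varepsilon^{-2s}$ for the $\varepsilon_{k}$ in \eqref{equ15}, and $\varepsilon_{k}\to0$ because $3p-6<4s$;
\item the Hartree term scales like $\varepsilon^{-(3-2s)}$, which is $o(\varepsilon^{-2s})$ since $s>\frac34$.
\end{itemize}
For the upper bound I take the trial function $u_{\tau}(x)=\frac{\tau^{3/2}}{\|Q\|_{2}}Q(\tau(x-x_{i}-y/\tau))$ with $x_{i}\in Z_{0}$ and $\tau=\varepsilon_{k}^{-1}$. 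Using the Pohozaev identities for \eqref{equ5} and $\mathbf{(V_2)}$, this gives
\begin{equation*}
e_{1}(a_{k})\le -\lambda_{\ast}\varepsilon_{k}^{-2s}+C\varepsilon_{k}^{-(3-2s)}+\frac{\varepsilon_{k}^{r}}{a^{\ast}}H(y)(1+o(1)),
\end{equation*}
where $\lambda_{\ast}>0$ is explicit. For the lower bound I drop $V\ge0$, control the Hartree term by Hardy--Littlewood--Sobolev and interpolation (it is $\le C\|(-\Delta)^{s/2}u\|_{2}^{(3-2s)/s}$, of lower order), and apply \eqref{equ4}. Together these give
\begin{equation*}
\varepsilon_{k}^{2s}e_{1}(a_{k})\to-\lambda_{\ast},\qquad \varepsilon_{k}^{2s}\|(-\Delta)^{s/2}u_{k}\|_{2}^{2}\to C_{1}>0,
\end{equation*}
and $\int Vu_{k}^{2}\to0$.

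\textbf{Blow-up and identification of the profile.} Set $w_{k}(x)=\varepsilon_{k}^{3/2}u_{k}(\varepsilon_{k}x+x_{k})$, where $x_{k}$ is a global maximum point of $u_{k}$; existence of the maximum will follow from the decay estimates below. Then $w_{k}$ is bounded in $H^{s}$ and $\|w_{k}\|_{p}$ is bounded below. By the Euler--Lagrange equation the rescaled multipliers satisfy $\mu_{k}\varepsilon_{k}^{2s}\to-\beta^{2s}<0$. I first use a Lions-type vanishing argument, centering at approximate maxima, and then pass to the limit to get $w_{k}\rightharpoonup w_{0}\ge0$, a solution of $(-\Delta)^{s}w+\beta^{2s}w=c\,w^{p-1}$. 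The energy equalities force $w_{0}$ to be an optimizer of \eqref{equ4} with $\|w_{0}\|_{2}=1$. Uniqueness of $Q$ and the choice of maximum point then give $w_{0}=Q/\sqrt{a^{\ast}}$, and norm convergence upgrades weak to strong $H^{s}$ convergence. Since $\int Vu_{k}^{2}\to0$, $\mathbf{(V_1)}$ and $V>0$ away from $Z$ give $x_{k}\to x_{0}\in Z$.

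For $L^{\infty}$ convergence I use the rescaled equation, whose right-hand side contains the Hartree potential $\varepsilon_{k}^{2s}\phi_{u_{k}}$. This potential is uniformly bounded, because $s>\frac34$ makes the kernel locally integrable against $L^{2}\cap L^{p}$ densities. A Moser/De Giorgi iteration then gives uniform $L^{\infty}$ and $C^{\gamma}$ bounds. A comparison argument with the fractional fundamental solution of $(-\Delta)^{s}+\beta^{2s}/2$ yields the uniform decay $w_{k}(x)\le C|x|^{-3-2s}$. Combining these with local uniform convergence and the nondegeneracy of $Q$ at its maximum, I expect to conclude that $x_{k}$ is the unique global maximum point for large $k$.

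\textbf{Second-order expansion (main obstacle).} To show $x_{0}\in Z_{0}$ and that $(x_{k}-x_{0})/\varepsilon_{k}$ converges to some $y_{0}\in K_{0}$, I compare the lower bound $e_{1}(a_{k})\ge -\lambda_{\ast}\varepsilon_{k}^{-2s}+(\text{Hartree correction})+\int Vu_{k}^{2}$ with the upper bound above. The Hartree correction can be matched on both sides to first order using the strong convergence of $w_{k}$. The key estimate is
\begin{equation*}
\int V u_{k}^{2}=\int V(\varepsilon_{k}x+x_{k})w_{k}^{2}(x)\,dx\ \ge\ \varepsilon_{k}^{r_{i}}\bigl(H_{i}(\tfrac{x_{k}-x_{i}}{\varepsilon_{k}})+o(1)\bigr).
\end{equation*}
To obtain it I argue by contradiction:
\begin{itemize}
\item if $|x_{k}-x_{i}|/\varepsilon_{k}\to\infty$, homogeneity of $V_{i}$ and Fatou make $\varepsilon_{k}^{-r}\int Vu_{k}^{2}\to\infty$;
\item if $r_{i}<r$, the term is too large in the same way.
\end{itemize}
Either case contradicts the upper bound. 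This forces $r_{i}=r$ and $\lambda_{i}\le\bar\lambda_{0}$, hence $x_{0}\in Z_{0}$, together with boundedness of $(x_{k}-x_{0})/\varepsilon_{k}$ and a limit $y_{0}$ with $H(y_{0})=\bar\lambda_{0}$.

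The delicate point is that the remaining error terms, chiefly the Hartree term of order $\varepsilon_{k}^{-(3-2s)}$, must be controlled to precision $o(\varepsilon_{k}^{r})$, or else cancel between the two bounds. I would try to obtain this cancellation from the optimality of $w_{k}$ plus the decay estimate. The polynomial tail $Q^{2}\sim|x|^{-6-4s}$ is also a concern: it makes $H_{i}$ finite only when $r<3+4s$, and the Fatou/dominated-convergence step has to be justified with the uniform decay bound in place of the exponential decay available when $s=1$.
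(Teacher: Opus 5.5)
You follow the paper closely in the rescaled first-order analysis and in identifying the profile $Q/\sqrt{a^{\ast}}$ (Lemmas \ref{lem3-1}--\ref{lem3-3}). There is, however, a genuine gap in the step that locates the concentration point, and the same gap already affects your claim that $\int Vu_k^2\to0$.

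\textbf{Where the argument fails.} In the original scaling the Hartree term has size $\varepsilon_k^{2s-3}\to\infty$. The potential term you want to detect is $O(1)$ at first order and $\varepsilon_k^{r}\to0$ at second order. Comparing $u_k$ with your $Q$-profile trial function $\psi_k$ gives $\int Vu_k^2\le\int V\psi_k^2+\bigl[E^{0}_{a_k}(\psi_k)-E^{0}_{a_k}(u_k)\bigr]$. Since $\psi_k$ exactly minimizes the kinetic-minus-$L^p$ part, the bracket is at most $\frac12\varepsilon_k^{2s-3}\bigl[D(w_k)-D(Q/\sqrt{a^{\ast}})\bigr]$, where $D$ is the Hartree double integral. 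Strong $H^s$ convergence only makes $D(w_k)-D(Q/\sqrt{a^{\ast}})=o(1)$, so the error is $o(\varepsilon_k^{2s-3})$, which need not even tend to zero. Your displayed upper bound already carries an error $C\varepsilon_k^{-(3-2s)}$. Hence your two bounds give only $\varepsilon_k^{2s}\int Vu_k^2\to0$, which is empty for bounded $V$. They certainly do not give $\int Vu_k^2\le\frac{\varepsilon_k^{r}}{a^{\ast}}(\bar{\lambda}_{0}+o(1))$.

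A sharper expansion does not rescue this. $Q/\sqrt{a^{\ast}}$ is not a critical point of the rescaled functional containing the $\varepsilon_k^{4s-3}$ Hartree term. So heuristically $E^{0}_{a_k}(\psi_k)-E^{0}_{a_k}(u_k)$ is of order $\varepsilon_k^{-2s}(\varepsilon_k^{4s-3})^2=\varepsilon_k^{6s-6}$, which diverges for $s<1$. Optimality of $w_k$ plus decay cannot turn this into the cancellation you hope for.

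\textbf{The missing idea.} The paper compares $u_k$ not with $Q$ but with its own translates. The kinetic, Hartree and $L^p$ terms are translation invariant, so minimality gives $E_{a_k}(u_k)\le E_{a_k}(u_k(\cdot+\xi))$ for every $\xi\in\mathbb{R}^3$. That is exactly $\int Vu_k^2\le\int V(x)u_k^2(x+\xi)\,dx$, with no remainder at all. Write $w_k(x)=\varepsilon_k^{3/2}u_k(\varepsilon_kx+x_k)$.
\begin{itemize}
\item Choosing $\xi=x_k-x_0$ with $x_0\in Z$ gives $\int Vu_k^2\le\int V(\varepsilon_kx+x_0)w_k^2\,dx\to V(x_0)=0$. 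This is what you need for $x_k\to Z$.
\item Choosing $\xi=x_k-x_0-\varepsilon_ky_0$ with $x_0\in Z_0$, $y_0\in K_0$, and using \eqref{equ9}, homogeneity of $V_0$ and dominated convergence with your uniform decay of $w_k$, gives the sharp bound $\int Vu_k^2\le\frac{\varepsilon_k^{r}}{a^{\ast}}(\bar{\lambda}_{0}+o(1))$. This is \eqref{equ3-37}.
\end{itemize}
Your Fatou/homogeneity contradiction argument for the lower bound is then exactly the paper's \eqref{equ3-38} and closes the proof. Your remark that the dominated convergence step needs $r<3+4s$ under merely polynomial decay is legitimate; the paper passes over it.
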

	
	We now proceed to address the local uniqueness of minimizers for $e_{1}(a)$ under the following non-degenerate condition on the potential $V(x)$. Assume that\\
	
	$\mathbf{(V_3)}$\quad $Z_{0}$ contains only one element and $y_0$ is the unique and non-degenerate critical point of $K_{0}$, where $Z_{0}$ and $K_{0}$ are given in \eqref{equ14}.

	\begin{theorem}\label{the3}
		Assume that $V(x)$ satisfies $\mathbf{(V_1)}$-$\mathbf{(V_3)}$, then there exists a unique nonnegative minimizer of $e_{1}(a_{k})$ as $k\rightarrow\infty$.
	\end{theorem}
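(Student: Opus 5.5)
We argue by contradiction, following the blow-up and local Pohozaev identity approach of \cite{GSIM,LZ}. Suppose there are two distinct nonnegative minimizers $u_{1,k}\not\equiv u_{2,k}$ of $e_{1}(a_{k})$ along a sequence $a_k\to\infty$. Each satisfies \eqref{equ04} with Lagrange multipliers $\mu_{1,k},\mu_{2,k}$. By Theorem \ref{the2} and $\mathbf{(V_3)}$ (so that $Z_0=\{x_0\}$ and $K_0$ reduces to the nondegenerate point $y_0$), both concentrate at the same point. Their maximum points $x_{j,k}$ satisfy $(x_{j,k}-x_0)/\varepsilon_k\to y_0$. We then rescale around a common point, say $\bar x_k:=x_{1,k}$:
\[
\bar u_{j,k}(x):=\varepsilon_k^{\frac32}\sqrt{a^\ast}\,u_{j,k}(\varepsilon_k x+\bar x_k).
\]
The rescaled functions satisfy fractional equations whose limit is \eqref{equ5}, with $\varepsilon_k^{2s}\mu_{j,k}\to -1$ after the natural normalization. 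The first step is to sharpen Theorem \ref{the2} into quantitative estimates: energy expansions for $e_1(a_k)$, an expansion of $\mu_{j,k}$, the decay bound $\bar u_{j,k}(x)\le C(1+|x|)^{-(3+2s)}$ uniformly in $k$ (via comparison with the Bessel-type kernel of $(-\Delta)^s+\tfrac12$), and uniform $C^{1,\beta}$ bounds coming from the $C^\alpha$ regularity of $V$.

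Next we set
\[
\xi_k:=\frac{\bar u_{1,k}-\bar u_{2,k}}{\|\bar u_{1,k}-\bar u_{2,k}\|_{L^\infty}},
\]
which solves a linear equation
\[
(-\Delta)^s\xi_k+\varepsilon_k^{2s}V(\varepsilon_kx+\bar x_k)\xi_k-\varepsilon_k^{2s}\mu_{1,k}\xi_k=g_k .
\]
Here $g_k$ collects three pieces: the difference quotient of the $|u|^{p-2}u$ term, the difference quotient of the Poisson term $\phi_u u$, and the term $\frac{\mu_{1,k}-\mu_{2,k}}{\|\cdot\|_\infty}\varepsilon_k^{2s}\bar u_{2,k}$. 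The Poisson term carries the factor $\varepsilon_k^{3-2s}$ relative to the rest after scaling, and since $s>3/4$ it vanishes in the limit. We control the multiplier difference by testing both equations against the other solution and using the $L^2$ constraint. Passing to the limit, $\xi_k\to\xi_0$ in $C_{loc}$ with
\[
(-\Delta)^s\xi_0+\xi_0-(p-1)Q^{p-2}\xi_0=\gamma\, Q
\]
for some constant $\gamma$. By the nondegeneracy of $Q$ for the fractional problem (Frank--Lenzmann--Silvestre), this gives
\[
\xi_0=b_0\Big(\tfrac{2s}{p-2}Q+x\cdot\nabla Q\Big)+\sum_{i=1}^3 b_i\,\partial_i Q .
\]
The mass constraint, tested against $Q$, forces $b_0=0$.

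The main step is to show $b_i=0$ for $i=1,2,3$. For this we use a local Pohozaev identity, obtained by multiplying the equations by $\partial_i u_{j,k}$ and integrating over a ball $B_\delta(\bar x_k)$. Because $(-\Delta)^s$ is nonlocal, I would carry this out through the Caffarelli--Silvestre extension, so that boundary terms appear as integrals over $\partial^+B^+_\delta$ in $\mathbb{R}^4_+$. These must be shown to be exponentially or polynomially small using the decay estimates together with estimates on the extended functions. Subtracting the identities for $j=1,2$ and dividing by the $L^\infty$ norm of the difference yields
\[
\int \partial_i V(\varepsilon_k x+\bar x_k)\,(\bar u_{1,k}+\bar u_{2,k})\,\xi_k = o(\cdot).
\]
Expanding $V$ through $\mathbf{(V_2)}$ near $x_0$ with the homogeneous $V_0$ gives $\sum_j b_j\,\partial_{ij}H(y_0)=0$. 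Nondegeneracy of $y_0$ in $\mathbf{(V_3)}$ then gives $b=0$, so $\xi_0\equiv0$.

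Finally, we reach a contradiction. Let $y_k$ be a point where $|\xi_k(y_k)|=1$. By the uniform decay of the $\bar u_{j,k}$ and a maximum-principle argument for the linear operator (its potential is close to $1$ away from a large ball), $y_k$ stays bounded, so $|\xi_0(\bar y)|=1$ for some limit point $\bar y$, contradicting $\xi_0\equiv0$.

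I expect the main obstacle to be the Pohozaev step: controlling the nonlocal boundary terms in the extension, and making the expansion $\partial_iV\approx\varepsilon_k^{r-1}\partial_iV_0$ precise enough. This requires matching the order of $(x_{j,k}-x_0)/\varepsilon_k-y_0$ against $\varepsilon_k^{r}$. The nonhomogeneity of $V$ means only the asymptotic relation \eqref{equ9} is available, which may need a slightly stronger derivative version of \eqref{equ9}.
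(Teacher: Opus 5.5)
Your proposal is sound. It follows the paper's skeleton: blow-up about $x_{1k}$, the normalized difference $\xi_k=\hat\eta_k$, the limit equation \eqref{equ4-20} with the nondegeneracy of $Q$, a translation identity plus nondegeneracy of $y_0$ for $b_1,b_2,b_3$, and boundedness of the point where $|\xi_k|=1$. The real difference is how you eliminate $b_0$, and there your route is simpler.

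The paper obtains $\int Q^{p-1}\hat\eta_0\,dx=0$ from a second, dilation-type Pohozaev identity. This means testing \eqref{equ4-7} with $(x-x_{1k})\cdot\nabla\hat u_{ik}$ and using the extension computation of Lemma \ref{lem5-4}. You instead use $\int\hat u_{1k}^2\,dx=\int\hat u_{2k}^2\,dx=a^\ast$, that is, $\int(\hat u_{1k}+\hat u_{2k})\xi_k\,dx=0$. Since $|\xi_k|\le 1$ and the $\hat u_{ik}$ decay uniformly, this passes to $\int Q\xi_0\,dx=0$. Moreover $\int Q(\frac{2s}{p-2}Q+x\cdot\nabla Q)\,dx=(\frac{2s}{p-2}-\frac32)a^\ast\neq0$ precisely because $p<2+\frac{4s}{3}$, so $b_0=0$. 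This removes the need for the dilation identity and Lemma \ref{lem5-4}. It also avoids handling the quotient $\varepsilon_k^{2s}(\mu_{1k}-\mu_{2k})/\|\hat u_{1k}-\hat u_{2k}\|_\infty$ inside that identity. That quotient is $O(1)$, and the paper drops it by replacing both $\mu_{ik}\varepsilon_k^{2s}$ by $-1$ before subtracting. The paper's conclusion still holds, because the corrected coefficient of $\int Q^{p-1}\hat\eta_0\,dx$ is $\frac{6+4s-3p}{2}>0$.

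For the translation step, the paper's global identity is preferable to your local one on $B_\delta$. Every term of $E_{a_k}$ except the potential term is translation invariant. Hence each minimizer satisfies exactly $\int V(\varepsilon_kx+x_{1k})\,\partial_j(\hat u_{ik}^2)\,dx=0$, either by Lemma \ref{lem5-3} or directly from minimality along $t\mapsto u_{ik}(\cdot+te_j)$. Your version produces boundary terms on $\partial^+B^+_\delta$ that are only polynomially small in the fractional setting, and you would have to show they are $o(\varepsilon_k^{2s+r})$ after normalization.

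Because the global identities are exact, you can subtract them and divide by $\varepsilon_k^{r}\|\hat u_{1k}-\hat u_{2k}\|_\infty$ using only $(x_{1k}-x_0)/\varepsilon_k\to y_0$, with no rate. So your worry about matching orders disappears.

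Your worry about needing a derivative version of \eqref{equ9} is legitimate. The paper's \eqref{equ4-28} differentiates $V$, which $\mathbf{(V_1)}$--$\mathbf{(V_2)}$ do not allow. You can avoid this by keeping the derivative on $(\hat u_{1k}+\hat u_{2k})\xi_k$, which needs decay of $\nabla\xi_k$, provable as in Lemma \ref{lem5-1}. You then integrate by parts onto $V_0\in C^2_{loc}$ only in the limit. The region $|\varepsilon_kx|\ge\delta$ contributes $O(\varepsilon_k^{3+4s-r})$, which is harmless because finiteness of $H$ already forces $r<3+4s$.

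One small slip: after scaling, the Poisson term carries $\varepsilon_k^{4s-3}$ (see \eqref{equ3-25}), not $\varepsilon_k^{3-2s}$. That is where $s>\frac34$ enters.
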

	\begin{remark}
	\rm 	 Compared with the study of the concentration and uniqueness of normalized solutions in classical elliptic problems \cite{GSIM,GJDE,LZ,WZDCDS}, the fractional Laplacian operator introduces some additional difficulties, which require the development of new analytical methods.\\
	$(1)$ Compared with our previous work \cite{LIUYANG}, we obtain the asymptotic behaviors and local uniqueness without conditions of homogeneity and unboundedness of the potential, while the potential is $|x|^2$ in \cite{LIUYANG}. Our results relax the global condition on the potential function to the local condition, which only depends on the asymptotic behaviors of the potential function near its minimum point. Similar enhancements have also been widely applied in the study of classical elliptic equations, we refer to \cite{DWY,LiWeiYang}. This improvement enables our results to be applied more widely.\\ 
	$(2)$ The proof of Theorem \ref{the2} follows from optimal energy estimates 
	\begin{equation*}
		\lim_{a\rightarrow\infty}\frac{e_{1}(a)}{\left(\frac{a}{\sqrt{a^{\ast}}}\right)^{\frac{4s(p-2)}{4s+6-3p}}}=\frac{3p-6-4s}{6-(3-2s)p},
	\end{equation*}
	where $a^{\ast}:=\|Q\|_{2}^{2}$ and $Q>0$ is the unique positive solution of \eqref{equ5}. However, when obtaining a lower bound estimate for $e_{1}(a)$, the $L^{2}$-subcritical nonlinearity term renders the fractional Gagliardo-Nirenberg inequality inapplicable directly. To address this, we have to employ the fact that
	\begin{equation*}
		\frac{e_{1}(a)}{\left(\frac{a}{\sqrt{a^{\ast}}}\right)^{\frac{4s(p-2)}{4s+6-3p}}}\geq\tilde{e}_{1}(\sqrt{a^{\ast}})+o(1)\quad \hbox{as $a\rightarrow\infty$},
	\end{equation*} 
	where $\tilde{e}_{1}(\sqrt{a^{\ast}})$ is a new auxiliary minimization problem defined in \eqref{equ3-2-1}, see Lemma \ref{lem3-1}.\\
	$(3)$ The inapplicability of classical local elliptic estimates leads to a significant increase in the difficulty of deriving the decay estimate for $|\nabla\hat{u}_{ik}(x)|$, where $\hat{u}_{ik}$ be as in \eqref{equ4-4}. Inspired by \cite[Theorem 12.2.4]{C}, we obtain the upper bound estimate for $\|\hat{u}_{ik}(y)\|_{C^{2s}(B_{1}(x))}$ by combining the $L^{\infty}$-estimate and the nonlocal term estimate, thereby deriving the decay property of $|\nabla\hat{u}_{ik}(x)|$, see Lemma \ref{lem5-1}. Moreover, the decay estimate of $\hat{\eta}_{k}$ serves as key information for establishing the local uniqueness of the constrained minimizers via contradiction, where $\hat{\eta}_{k}$ is defined in \eqref{equ4-5}. However, due to the complexity of the equation satisfied by $\hat{\eta}_{k}$, classical methods for handling fractional elliptic equations, such as Nash-Moser methods and Bessel kernel properties, are no longer applicable. To address this challenge, we have to develop a new comparison principle, see Lemma \ref{lem4-2} and Lemma \ref{lem5-2}.\\
	$(4)$ The Pohozaev-type identity plays an important role in proving the local uniqueness of constrained minimizers. Nevertheless, the non-locality of the fractional Laplacian operator makes it impossible to apply integration by parts and other conventional methods directly. To overcome this difficulty, we employ the extension method proposed by Caffarelli and Silvestre\cite{CC}, which transforms the nonlocal problem into a local one in higher dimensions. Through precise integral estimates, we establish the Pohozaev-type identity, see Lemma \ref{lem4-3}, Lemma \ref{lem5-3} and Lemma \ref{lem5-4}.
		\end{remark}

	This paper is organized as follows. Section \ref{sec 2} focuses on proving Theorem \ref{the1}, which is concerned with the existence of minimisers for $e_{m}(a)$. In Section \ref{sec 3}, we shall prove Theorem \ref{the2} on the refined limiting profiles of nonnegative minimizers	for $e_{1}(a)$ as $a\rightarrow\infty$.  The proof of Theorem \ref{the3} is given in Section \ref{sec 4}, which centers on the local uniqueness of nonnegative minimizers. Finally, Appendix A contains the detailed proofs of some results used in the proof of Theorem \ref{the3}.

	\section{Existence of minimizers}\label{sec 2}
	In this section, we mainly prove Theorem \ref{the1} on the existence of minimizers for $e_{m}(a)$. As shown in \cite[Theorem 1.1]{D}, $Q(x)$ exhibits the following decay behavior
	\begin{equation}\label{equ6}
		\frac{C_{1}}{1+|x|^{3+2s}}\leq Q(x)\leq\frac{C_{2}}{1+|x|^{3+2s}}\quad \mathrm {and} \quad	|\nabla Q(x)|\leq\frac{C_{3}}{1+|x|^{3+2s}}.
	\end{equation}
	where $Q$ is the unique positive solution of \eqref{equ5} and $C_{1}$, $C_{2}$ and $C_{3}$ are positive constants. Using  Pohozaev identity, one knows that
	\begin{equation}\label{equ7}
		\int_{\mathbb{R}^{3}}|(-\Delta)^{\frac{s}{2}}Q|^{2}dx=\frac{3(p-2)}{2sp}\int_{\mathbb{R}^{3}}|Q|^{p}dx=\frac{3(p-2)}{6-(3-2s)p}\int_{\mathbb{R}^{3}}|Q|^{2}dx.
	\end{equation}
	Recall the Hardy–Littewood–Sobolev inequality (cf. \cite[Theorem 4.3]{L}), one has
	\begin{equation}\label{equ8}
		\int_{\mathbb{R}^{3}}\int_{\mathbb{R}^{3}}\frac{|u(x)||v(y)|}{|x-y|^{3-2s}}dxdy\leq C\|u\|_{\frac{6}{3+2s}}\|v\|_{\frac{6}{3+2s}}\quad \hbox{ $u, v\in L^{\frac{6}{3+2s}}(\mathbb{R}^{3})$}.
	\end{equation}
We now consider the existence of constrained minimizers for $e_{m}(a)$ when $V(x)\equiv constant$. Without loss of generality, we may assume that $V(x)\equiv0$. For this purpose, we introduce the following constrained minimization problem
	\begin{equation*}
		e^{0}_{m}(a):=\inf_{u\in S_{m}}E^{0}_{a}(u),
	\end{equation*}
	where energy functional $E^{0}_{a}(u)$ is given by
	\begin{equation*}
		E^{0}_{a}(u):=\int_{\mathbb{R}^{3}}|(-\Delta)^{\frac{s}{2}} u|^{2}dx-\frac{1}{2}\int_{\mathbb{R}^{3}}\int_{\mathbb{R}^{3}}\frac{|u(x)|^{2}|u(y)|^{2}}{|x-y|^{3-2s}}dxdy-\frac{2a^{p-2}}{p}\int_{\mathbb{R}^{3}}|u|^{p}dx.
	\end{equation*}

	\begin{lemma}\label{lem2-1}
		The map $m\mapsto e^{0}_{m}(a)$ is a continuous function on $\left( 0, \infty\right) $.
	\end{lemma}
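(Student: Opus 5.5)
Fix $a>0$ and $m_0>0$, and take $m_n\to m_0$. The plan is to prove
\[
\limsup_n e^{0}_{m_n}(a)\le e^{0}_{m_0}(a)\quad\text{and}\quad \liminf_n e^{0}_{m_n}(a)\ge e^{0}_{m_0}(a)
\]
by rescaling admissible functions in amplitude: $u\mapsto t u$ with $t=\sqrt{m/m'}$ sends $S_{m'}$ onto $S_{m}$. The preliminary step is to check that $e^{0}_{m}(a)$ is finite, i.e. $e^0_m(a)>-\infty$, and that minimizing sequences are bounded in $H^s(\mathbb{R}^3)$, locally uniformly in $m$. Neither $e^0_m(a)$ nor the bounds need to be positive or sharp; only finiteness and boundedness matter.

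\textbf{Coercivity.} For the local nonlinear term, \eqref{equ4} gives
\[
\int_{\mathbb{R}^3}|u|^p\,dx\le C_{opt}\,\|(-\Delta)^{s/2}u\|_2^{\frac{3(p-2)}{2s}}\, m^{\frac p2-\frac{3p-6}{4s}},
\]
and the exponent $\frac{3(p-2)}{2s}<2$ because $p<2+\frac{4s}{3}$. For the nonlocal term, \eqref{equ8} bounds it by $C\|u\|_{12/(3+2s)}^4$. Interpolating between $L^2$ and $L^{2_s^*}$ and using the fractional Sobolev inequality, this is at most
\[
C\,\|(-\Delta)^{s/2}u\|_2^{\frac{3-2s}{s}}\, m^{\frac{4s-3}{2s}}.
\]
The exponent $\frac{3-2s}{s}$ is less than $2$ exactly when $s>\frac34$, which is where the hypothesis $s\in(\frac34,1)$ enters. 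Young's inequality then yields
\[
E^0_a(u)\ge \tfrac12\|(-\Delta)^{s/2}u\|_2^2-C(m,a)
\]
with $C(m,a)$ bounded for $m$ in compact subsets of $(0,\infty)$. This gives finiteness of $e^0_m(a)$ and the uniform $H^s$ bound on sequences whose energies stay bounded above.

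\textbf{Upper bound.} Given $\varepsilon>0$, pick $u\in S_{m_0}$ with $E^0_a(u)\le e^0_{m_0}(a)+\varepsilon$ and set $u_n=\sqrt{m_n/m_0}\,u\in S_{m_n}$. Writing $t_n^2=m_n/m_0\to1$, we have
\[
E^0_a(u_n)=t_n^2\int_{\mathbb{R}^3}|(-\Delta)^{s/2}u|^2\,dx-\frac{t_n^4}{2}\int_{\mathbb{R}^3}\int_{\mathbb{R}^3}\frac{|u(x)|^2|u(y)|^2}{|x-y|^{3-2s}}\,dx\,dy-t_n^p\,\frac{2a^{p-2}}{p}\int_{\mathbb{R}^3}|u|^p\,dx,
\]
which converges to $E^0_a(u)$. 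Hence $\limsup_n e^0_{m_n}(a)\le e^0_{m_0}(a)+\varepsilon$.

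\textbf{Lower bound.} For each $n$, pick $v_n\in S_{m_n}$ with $E^0_a(v_n)\le e^0_{m_n}(a)+\frac1n$. By the upper bound, $E^0_a(v_n)$ is bounded above, so the coercivity estimate makes $\{v_n\}$ bounded in $H^s$. Consequently the kinetic, Hartree and $L^p$ terms of $v_n$ are all uniformly bounded. Set $w_n=\sqrt{m_0/m_n}\,v_n\in S_{m_0}$. Because each term of $E^0_a$ is homogeneous in the amplitude and uniformly bounded,
\[
|E^0_a(w_n)-E^0_a(v_n)|\le C\,|t_n^{-1}-1|\to0 .
\]
Therefore $e^0_{m_0}(a)\le E^0_a(w_n)\le e^0_{m_n}(a)+o(1)$, which gives the $\liminf$ inequality and completes the continuity.

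The main obstacle is the coercivity step, specifically controlling the Hartree term via Hardy--Littlewood--Sobolev, interpolation and Sobolev so that its power of $\|(-\Delta)^{s/2}u\|_2$ is below $2$. This is the only place where the restriction $s>\frac34$ is used.
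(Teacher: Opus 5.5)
Your proof is correct and follows the paper's argument essentially step for step: coercivity $E^0_a(u)\ge\frac12\|(-\Delta)^{\frac s2}u\|_2^2-C$ comes from the Gagliardo--Nirenberg and Hardy--Littlewood--Sobolev bounds (using $\frac{3(p-2)}{2s}<2$ and $s>\frac34$), amplitude rescaling $u\mapsto\sqrt{m'/m}\,u$ gives the $\limsup$, and rescaling the uniformly $H^s$-bounded $\frac1n$-near-minimizers of $e^0_{m_n}(a)$ gives the $\liminf$ (you rescale a single $\varepsilon$-near-minimizer where the paper rescales a minimizing sequence, which is only cosmetic). One harmless slip: the mass factor in your Hartree estimate should be $m^{\frac{6s-3}{2s}}$, since the interpolation weight on $L^2$ is $\theta=\frac{6s-3}{4s}$, not $m^{\frac{4s-3}{2s}}$; this changes nothing, because only the kinetic power $\frac{3-2s}{s}<2$ and local boundedness in $m$ are used.
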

	\begin{proof}
		Let $u\in H^{s}(\mathbb{R}^{3})$ with $\|u\|_{2}^{2}=m>0$, applying \eqref{equ4} and \eqref{equ8}, one can see that
		\begin{equation}\label{equ2-1}
			\int_{\mathbb{R}^{3}}\int_{\mathbb{R}^{3}}\frac{u^{2}(x)u^{2}(y)}{|x-y|^{3-2s}}dxdy\leq C\left(\int_{\mathbb{R}^{3}}|u|^{\frac{12}{3+2s}}dx\right)^{\frac{3+2s}{3}}\leq C\left(\int_{\mathbb{R}^{3}}|(-\Delta)^{\frac{s}{2}} u|^{2}dx\right)^{\frac{6-4s}{4s}}.
		\end{equation} 
		Using the Young’s inequality, we deduce from \eqref{equ4} and \eqref{equ2-1} that for any $2<p<2+\frac{4s}{3}$
		\begin{equation}\label{equ2-2}
			\begin{split}
				E^{0}_{a}(u)&\geq\int_{\mathbb{R}^{3}}|(-\Delta)^{\frac{s}{2}} u|^{2}dx-C\left(\int_{\mathbb{R}^{3}}|(-\Delta)^{\frac{s}{2}} u|^{2}dx\right)^{\frac{6-4s}{4s}}-Ca^{p-2}\left(\int_{\mathbb{R}^{3}}|(-\Delta)^{\frac{s}{2}} u|^{2}dx\right)^{\frac{3p-6}{4s}}\\
				&\geq\frac{1}{2} \int_{\mathbb{R}^{3}}|(-\Delta)^{\frac{s}{2}} u|^{2}dx-C_{a},
			\end{split}
		\end{equation}
		which shows that $E^{0}_{a}(u)$ is bounded from below.
		
		Now consider $m\in (0, \infty)$ and a sequence $\left\lbrace m_{n}\right\rbrace \subset\left( 0, \infty\right)$ such that $\lim\limits_{n\rightarrow\infty}m_{n}=m$. There exists a sequence $\left\lbrace u_{n} \right\rbrace \subset S_{m}$ such that $\lim\limits_{n\rightarrow\infty}E^{0}_{a}(u_{n})=e^{0}_{m}(a)$. Moreover, \eqref{equ2-2} shows that $\left\lbrace u_{n}\right\rbrace $ is uniformly bounded in $H^{s}(\mathbb{R}^{3})$. Let $w_{n}:=\sqrt{\frac{m_{n}}{m}}u_{n}$, one has $w_{n}\in S_{m_{n}}$ and
		\begin{equation*}
			\begin{split}
				E^{0}_{a}(w_{n})=&\frac{m_{n}}{m}E^{0}_{a}(u_{n})+\frac{1}{2}\frac{m_{n}}{m}\left( 1-\frac{m_{n}}{m}\right) \int_{\mathbb{R}^{3}}\int_{\mathbb{R}^{3}}\frac{|u_{n}(x)|^{2}|u_{n}(y)|^{2}}{|x-y|^{3-2s}}dxdy\\
				&-\frac{2a^{p-2}}{p}\frac{m_{n}}{m}\left( 1-\left( \frac{m_{n}}{m}\right) ^{\frac{p-2}{2}}\right)\int_{\mathbb{R}^{3}}|u_{n}|^{p}dx.
			\end{split}
		\end{equation*} 
		Them we have
		\begin{equation}\label{equ2-3}
			\begin{split}
				\left| E^{0}_{a}(w_{n})-E^{0}_{a}(u_{n})\right| \leq&\left| \frac{m_{n}}{m}-1\right| \left| E^{0}_{a}(u_{n})\right| +C\left|1- \frac{m_{n}}{m}\right|\left(\int_{\mathbb{R}^{3}}|u_{n}|^{\frac{12}{3+2s}}dx\right)^{\frac{3+2s}{3}}\\
				&+C\left|  1-\left( \frac{m_{n}}{m}\right) ^{\frac{p-2}{2}}\right| \int_{\mathbb{R}^{3}}|u_{n}|^{p}dx\\
				\leq&C\left| \frac{m_{n}}{m}-1\right|+C\left| \left( \frac{m_{n}}{m}\right) ^{\frac{p-2}{2}}-1\right|.
			\end{split}
		\end{equation}
		Thus
		\begin{equation}\label{equ2-4}
			e^{0}_{m_{n}}(a)\leq E^{0}_{a}(w_{n})\leq E^{0}_{a}(u_{n})+C\left| \frac{m_{n}}{m}-1\right|+C\left| \left( \frac{m_{n}}{m}\right) ^{\frac{p-2}{2}}-1\right|,
		\end{equation}
		which implies that
		\begin{equation}\label{equ2-5}
			\limsup_{n\rightarrow\infty}e^{0}_{m_{n}}(a)\leq\lim_{n\rightarrow\infty}E^{0}_{a}(u_{n})=e^{0}_{m}(a).
		\end{equation} 
		Further, for all $n$, there exists $v_{n}\in S_{m_{n}}$ such that
		\begin{equation*}
			e^{0}_{m_{n}}(a)\leq E^{0}_{a}(v_{n})\leq e^{0}_{m_{n}}(a)+\frac{1}{n},
		\end{equation*} 
		from which and \eqref{equ2-4} we deduce that $E^{0}_{a}(v_{n})$ is uniformly bounded. Then \eqref{equ2-2} shows that $\left\lbrace v_{n}\right\rbrace $ is uniformly bounded in $H^{s}(\mathbb{R}^{3})$. Let $h_{n}:=\sqrt{\frac{m}{m_{n}}}v_{n}$, similar to \eqref{equ2-3}, one derive that
		\begin{equation*}
			\left| E^{0}_{a}(h_{n})-E^{0}_{a}(v_{n})\right| \leq C\left| \frac{m}{m_{n}}-1\right|+C\left| \left( \frac{m}{m_{n}}\right) ^{\frac{p-2}{2}}-1\right|.
		\end{equation*} 
		Thus we have
		\begin{equation*}
			\begin{aligned}
				e^{0}_{m_{n}}(a)&\geq E^{0}_{a}(v_{n})-\frac{1}{n}\\
				&\geq E^{0}_{a}(h_{n})-C\left| \frac{m}{m_{n}}-1\right|-C\left| \left( \frac{m}{m_{n}}\right) ^{\frac{p-2}{2}}-1\right|-\frac{1}{n}\\
				&\geq e^{0}_{m}(a)-C\left| \frac{m}{m_{n}}-1\right|-C\left| \left( \frac{m}{m_{n}}\right) ^{\frac{p-2}{2}}-1\right|-\frac{1}{n}
			\end{aligned}
		\end{equation*}
		which means that 
		\begin{equation*}
			\liminf_{n\rightarrow\infty}e^{0}_{m_{n}}(a)\geq e^{0}_{m}(a).
		\end{equation*}
		Combining with \eqref{equ2-5}, we obtain that $\lim\limits_{n\rightarrow\infty}e^{0}_{m_{n}}(a)= e^{0}_{m}(a)$
		as $m_{n}\rightarrow m$, this completes the proof of Lemma \ref{lem2-1}.
	\end{proof}
	
	Let $\left\lbrace u_{n}\right\rbrace $ be a minimizing sequence for $e^{0}_{m}(a)$. By virtue of the concentration compactness lemma from \cite[Lemma 2.4]{W} (see also \cite[Lemma 2.4]{F}), there exists a subsequence, still denoted by $\left\lbrace u_{n}\right\rbrace $ for simplicity, such that vanishing, dichotomy or compactness is satisfied for this sequence. We next prove two key claims to rule out the vanishing case and dichotomy case.
	
	\noindent
	{\bf  Claim I} The vanishing case does not occur. If not, one has $\lim\limits_{n\rightarrow\infty}\sup\limits_{y\in \mathbb{R}^{3}}\int_{B_{R}(y)}|u_{n}|^{2}dx=0$ for all $R>0$. Using the vanishing lemma in \cite{S}, it follows that
	\begin{equation*}
		\lim_{n\rightarrow\infty}\int_{\mathbb{R}^{3}}|u_{n}|^{r}dx=0\quad \hbox{for any $r\in (2, \frac{6}{3-2s})$},
	\end{equation*}
	from which and \eqref{equ8} we deduce that
	\begin{equation*}
		\int_{\mathbb{R}^{3}}\int_{\mathbb{R}^{3}}\frac{|u_{n}(x)|^{2}|u_{n}(y)|^{2}}{|x-y|^{3-2s}}dxdy\leq C\left(\int_{\mathbb{R}^{3}}|u_{n}|^{\frac{12}{3+2s}}dx\right)^{\frac{3+2s}{3}}\rightarrow0\quad \hbox{as $n\rightarrow\infty$}.
	\end{equation*}
	Thus we have
	\begin{equation}\label{equ2-6}
		e^{0}_{m}(a)=\lim _{n\rightarrow\infty}E^{0}_{a}(u_{n})=\lim _{n\rightarrow\infty}\int_{\mathbb{R}^{3}}|(-\Delta)^{\frac{s}{2}} u_{n}|^{2}dx\geq0.
	\end{equation} 
	Now we set $u_{t}(x)=\frac{t^{\frac{3}{2}}\sqrt{m}}{\|Q\|_{2}}Q(tx)$ with $t>0$ is sufficiently small. It then follows from \eqref{equ7} that
	\begin{equation}\label{equ2-7}
		\begin{split}
			e^{0}_{m}(a)\leq E^{0}_{a}(u_{t})&\leq\frac{(3p-6)mt^{2s}}{6-(3-2s)p}-\frac{4spm^{\frac{p}{2}}}{6-(3-2s)p}\frac{t^{\frac{3p-6}{2}}}{\|Q\|_{2}^{p-2}}\\
			&<0\quad \hbox{as $t$ is sufficiently small},
		\end{split}
	\end{equation}
	where we have used the fact that $\frac{3p-6}{2}<2s$. This contradicts \eqref{equ2-6}, and the vanishing case is ruled out.
	
	\noindent
	{\bf  Claim II} The dichotomy case does not occur. For any $\lambda\in (0, m)$ and $\theta>1$, we define
	\begin{equation*}
		e^{0}_{\lambda}(a):=\inf_{u\in S_{\lambda}}E^{0}_{a}(u),
	\end{equation*}
	and
	\begin{equation*}
		e^{0}_{\theta\lambda}(a):=\inf_{u\in S_{\theta\lambda}}E^{0}_{a}(u)=\inf_{v\in S_{\lambda}}E^{0}_{a}(\sqrt{\theta}v).
	\end{equation*}
	Similar to \eqref{equ2-1}-\eqref{equ2-2}, one can derive that there exists $\{v_{n}\}\subset H^{s}(\mathbb{R}^{3})$ such that
	\begin{equation*}
		e^{0}_{\theta\lambda}(a)=\lim _{n\rightarrow\infty}E^{0}_{a}(\sqrt{\theta}v_{n}),\quad  e^{0}_{\lambda}(a)=\lim _{n\rightarrow\infty}E^{0}_{a}(v_{n})\quad \hbox{and}\quad \|v_{n}\|_{2}^{2}=\lambda.
	\end{equation*}
	We prove that there exists some $\delta>0$ such that
	\begin{equation*}
		\liminf_{n\rightarrow\infty}\int_{\mathbb{R}^{3}}\int_{\mathbb{R}^{3}}\frac{|v_{n}(x)|^{2}|v_{n}(y)|^{2}}{|x-y|^{3-2s}}dxdy\geq \delta>0\quad \hbox{or}\quad	\liminf_{n\rightarrow\infty}\int_{\mathbb{R}^{3}}|v_{n}|^{p}dx\geq \delta>0.
	\end{equation*}
	If not, one has
	\begin{equation*}
		e^{0}_{\lambda}(a)=\lim _{n\rightarrow\infty}E^{0}_{a}(v_{n})=\lim _{n\rightarrow\infty}\int_{\mathbb{R}^{3}}|(-\Delta)^{\frac{s}{2}} v_{n}|^{2}dx\geq0,
	\end{equation*}
	which contradicts \eqref{equ2-7}. Thus we have
	\begin{equation}\label{equ2-8}
		\begin{split}
			&e^{0}_{\theta\lambda}(a)=\lim_{n\rightarrow\infty}E^{0}_{a}(\sqrt{\theta}v_{n})\\
			=&\lim_{n\rightarrow\infty}\left( \theta\int_{\mathbb{R}^{3}}|(-\Delta)^{\frac{s}{2}} v_{n}|^{2}dx-\frac{\theta^{2}}{2}\int_{\mathbb{R}^{3}}\int_{\mathbb{R}^{3}}\frac{|v_{n}(x)|^{2}|v_{n}(y)|^{2}}{|x-y|^{3-2s}}dxdy-\frac{2a^{p-2}}{p}\theta^{\frac{p}{2}}\int_{\mathbb{R}^{3}}|v_{n}|^{p}dx\right) \\
			=&\theta e^{0}_{\lambda}(a)+\frac{\theta(1-\theta)}{2}\lim_{n\rightarrow\infty}\int_{\mathbb{R}^{3}}\int_{\mathbb{R}^{3}}\frac{|v_{n}(x)|^{2}|v_{n}(y)|^{2}}{|x-y|^{3-2s}}dxdy+\frac{2a^{p-2}}{p}\theta(1-\theta^{\frac{p-2}{2}})\lim_{n\rightarrow\infty}\int_{\mathbb{R}^{3}}|v_{n}|^{p}dx\\
			<&\theta e^{0}_{\lambda}(a).
		\end{split}
	\end{equation}
	It then follows from \eqref{equ2-8} that
	\begin{equation}\label{equ2-9}
		e^{0}_{m}(a)=\frac{m-\lambda}{m}e^{0}_{\frac{m}{m-\lambda}(m-\lambda)}(a)+\frac{\lambda}{m}e^{0}_{\frac{m}{\lambda}\lambda}(a)<e^{0}_{m-\lambda}(a)+e^{0}_{\lambda}(a).
	\end{equation}
	Suppose the dichotomy case occurs. Then the concentration compactness lemma (cf. \cite[Lemma 2.4]{W}) shows that there exist some $\alpha\in (0, m)$ and two bounded sequences $u_{1n}, u_{2n}\in H^{s}(\mathbb{R}^{3})$ such that for any $\varepsilon>0$, there holds that
	\begin{equation*}
		\left|\int_{\mathbb{R}^{3}}\left| u_{1n}\right|^{2}dx-\alpha \right|\leq\varepsilon, \quad \left|\int_{\mathbb{R}^{3}}\left| u_{2n}\right|^{2}dx-\left(m-\alpha  \right) \right|\leq\varepsilon,
	\end{equation*}
	\begin{equation*}
		\|u_{n}-u_{1n}-u_{2n}\|_{r}\rightarrow0\quad \hbox{for $r\in[2, 2_{s}^{\ast})$}.
	\end{equation*}
	\begin{equation*}
		\liminf_{n\rightarrow\infty} \int_{\mathbb{R}^{3}}\left( |(-\Delta)^{\frac{s}{2}} u_{n}|^{2}-|(-\Delta)^{\frac{s}{2}} u_{1n}|^{2}-|(-\Delta)^{\frac{s}{2}} u_{2n}|^{2}\right)dx\geq0.
	\end{equation*}
	Furthermore, there exist $\left\lbrace y_{n}\right\rbrace \subset\mathbb{R}^{N}$ and $\left\lbrace R_{n}\right\rbrace \subset(0, \infty)$ with $\lim\limits_{n\rightarrow\infty}R_{n}=\infty$, such that
	
	\begin{equation*}
		\begin{cases}
			u_{1n}=u_{n}
			\quad &\text{if } |x-y_{n}|\leq R_{0},\\
			u_{1n}\leq|u_{n}|&\text{if } R_{0}\leq|x-y_{n}|\leq 2R_{0},\\
			u_{1n}=0&\text{if } |x-y_{n}|\geq 2R_{0},
		\end{cases}\quad 
		\begin{cases}
			u_{2n}=0
			\quad &\text{if } |x-y_{n}|\leq R_{n},\\
			u_{2n}\leq|u_{n}|&\text{if } R_{n}\leq|x-y_{n}|\leq 2R_{n},\\
			u_{2n}=u_{n}&\text{if } |x-y_{n}|\geq 2R_{n}.
		\end{cases}
	\end{equation*} 
	Some calculations yield
	\begin{equation*}
		\int_{\mathbb{R}^{3}}\int_{\mathbb{R}^{3}} \frac{|u_{1n}(x)|^{2}|u_{2n}(y)|^{2}}{|x-y|^{3-2s}}dxdy\leq\frac{1}{2^{3-2s}(R_{n}-R_{0})^{3-2s}}\| u_{1n}\|_{2}^{2}\| u_{2n}\|_{2}^{2}\rightarrow0\quad \hbox{as $n\rightarrow\infty$},
	\end{equation*}
	where we have used the fact that $|x-y_{n}|\leq2R_{0}$ for $x\in {\rm supp}~u_{1n}$, and $|y-y_{n}|\geq2R_{n}$ for $y\in {\rm supp}~u_{2n}$. Similarly, one can see that
	\begin{equation*}
		\int_{\mathbb{R}^{3}}\int_{\mathbb{R}^{3}} \frac{|u_{1n}(x)|^{2}u_{1n}(y)u_{2n}(y)}{|x-y|^{3-2s}}dxdy\rightarrow0\quad \hbox{as $n\rightarrow\infty$}.
	\end{equation*}
	Then we have
	\begin{equation*}
		\begin{split}
			&	\int_{\mathbb{R}^{3}}\int_{\mathbb{R}^{3}}\left( \frac{|u_{n}(x)|^{2}|u_{n}(y)|^{2}}{|x-y|^{3-2s}}-\frac{|u_{1n}(x)|^{2}|u_{1n}(y)|^{2}}{|x-y|^{3-2s}}-\frac{|u_{2n}(x)|^{2}|u_{2n}(y)|^{2}}{|x-y|^{3-2s}}\right) dxdy\\
			=&\int_{\mathbb{R}^{3}}\int_{\mathbb{R}^{3}}\left( \frac{|u_{n}(x)|^{2}|u_{n}(y)|^{2}}{|x-y|^{3-2s}}-\frac{|u_{1n}(x)+u_{2n}(x)|^{2}|u_{1n}(y)+u_{2n}(y)|^{2}}{|x-y|^{3-2s}}\right) dxdy+o(1)\\
			=&\int_{\mathbb{R}^{3}}\int_{\mathbb{R}^{3}} \frac{|u_{n}(y)|^{2}\left(|u_{n}(x)|^{2}-|u_{1n}(x)+u_{2n}(x)|^{2} \right) }{|x-y|^{3-2s}}dxdy\\
			&+\int_{\mathbb{R}^{3}}\int_{\mathbb{R}^{3}} \frac{|u_{1n}(x)+u_{2n}(x)|^{2}\left(|u_{n}(y)|^{2}-|u_{1n}(y)+u_{2n}(y)|^{2} \right) }{|x-y|^{3-2s}}dxdy+o(1).
		\end{split}
	\end{equation*}
	Applying the H\"{o}lder's inequality, one can deduce from \eqref{equ8} that
	\begin{equation*}
		\begin{split}
			&	\int_{\mathbb{R}^{3}}\int_{\mathbb{R}^{3}} \frac{|u_{n}(y)|^{2}\left(|u_{n}(x)|^{2}-|u_{1n}(x)+u_{2n}(x)|^{2} \right) }{|x-y|^{3-2s}}dxdy\\
			\leq&C \left(	\int_{\mathbb{R}^{3}}|u_{n}(y)|^{\frac{12}{3+2s}} dy\right)^{\frac{3+2s}{6}}\left( \int_{\mathbb{R}^{3}}\left| |u_{n}(x)|^{2}-|u_{1n}(x)+u_{2n}(x)|^{2}\right|^{\frac{6}{3+2s}}dx\right)^{\frac{3+2s}{6}}\\
			\leq &C\| u_{n}(x)+u_{1n}(x)+u_{2n}(x)\|_{\frac{12}{3+2s}}\| u_{n}(x)-u_{1n}(x)-u_{2n}(x)\|_{\frac{12}{3+2s}}\leq C\varepsilon,
		\end{split}
	\end{equation*}
	and
	\begin{equation*}
		\int_{\mathbb{R}^{3}}\int_{\mathbb{R}^{3}} \frac{|u_{1n}(x)+u_{2n}(x)|^{2}\left(|u_{n}(y)|^{2}-|u_{1n}(y)+u_{2n}(y)|^{2} \right) }{|x-y|^{3-2s}}dxdy\leq C\varepsilon.
	\end{equation*}
	Thus we obtain that
	\begin{equation}\label{equ2-10}
		\begin{split}
			&\lim_{n\rightarrow\infty}\int_{\mathbb{R}^{3}}\int_{\mathbb{R}^{3}}\frac{|u_{n}(x)|^{2}|u_{n}(y)|^{2}}{|x-y|^{3-2s}}dxdy\\
			\leq&\lim_{n\rightarrow\infty}\left( \int_{\mathbb{R}^{3}}\int_{\mathbb{R}^{3}}\frac{|u_{1n}(x)|^{2}|u_{1n}(y)|^{2}}{|x-y|^{3-2s}}dxdy+\int_{\mathbb{R}^{3}}\int_{\mathbb{R}^{3}}\frac{|u_{2n}(x)|^{2}|u_{2n}(y)|^{2}}{|x-y|^{3-2s}}dxdy\right) 
		\end{split}
	\end{equation}
	By the facts $\left|\left| u_{n}\right| ^{p} -\left|u_{1n}+u_{2n} \right| ^{p}\right| \leq C\left| u_{n}-(u_{1n}+u_{2n})\right| \left(\left| u_{n}\right| ^{p-1}+\left|u_{1n}+u_{2n} \right| ^{p-1} \right) $ and ${\rm supp}u_{1n}\cap{\rm supp}u_{2n}=\emptyset$, we have
	\begin{equation*}
		\begin{split}
			&\int_{\mathbb{R}^{3}}\left(\left| u_{n}\right| ^{p} -\left| u_{1n}\right| ^{p} -\left| u_{2n}\right| ^{p}  \right) 	dx=\int_{\mathbb{R}^{3}}\left( \left| u_{n}\right| ^{p}-\left| u_{1n}+u_{2n}\right| ^{p}\right) dx\\
			\leq &C\int_{\mathbb{R}^{3}}\left| u_{n}-(u_{1n}+u_{2n})\right| \left| \left| u_{n}\right| ^{p-1}+\left|u_{1n}+u_{2n} \right| ^{p-1} \right| dx\\
			\leq&C\| u_{n}-u_{1n}-u_{2n}\|_{2}\leq C\varepsilon,
		\end{split}
	\end{equation*}
	which shows that
	\begin{equation}\label{equ2-11}
		\begin{split}
			\lim_{n\rightarrow\infty}\int_{\mathbb{R}^{3}}\left| u_{n}\right| ^{p}dx\leq\lim_{n\rightarrow\infty}\left(\int_{\mathbb{R}^{3}}\left| u_{1n}\right| ^{p}dx+\int_{\mathbb{R}^{3}}\left| u_{2n}\right| ^{p}dx \right).
		\end{split}
	\end{equation}
	Using \eqref{equ2-10}, \eqref{equ2-11} and Lemma \ref{lem2-1}, one can check that
	\begin{equation*}
		\begin{split}
			e^{0}_{m}(a)=\lim _{n\rightarrow\infty}E^{0}_{a}(u_{n})\geq\lim _{n\rightarrow\infty}E^{0}_{a}(u_{1n})+\lim _{n\rightarrow\infty}E^{0}_{a}(u_{2n})\geq e^{0}_{m-\alpha}(a)+e^{0}_{\alpha}(a),
		\end{split}
	\end{equation*}
	which contradicts \eqref{equ2-9}. This  implies that the dichotomy case cannot occur.
	
	\begin{proposition}\label{lem2-2}
		For any $m>0$, then $e^{0}_{m}(a)$ admits at least one minimiser.
	\end{proposition}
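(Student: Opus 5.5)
Since Claims I and II have already excluded vanishing and dichotomy for a minimizing sequence $\{u_{n}\}\subset S_{m}$ of $e^{0}_{m}(a)$, the concentration compactness lemma (cf. \cite[Lemma 2.4]{W}) leaves only the compactness alternative. There is therefore a sequence $\{y_{n}\}\subset\mathbb{R}^{3}$ such that for every $\varepsilon>0$ there is $R_{\varepsilon}>0$ with $\int_{B_{R_{\varepsilon}}(y_{n})}|u_{n}|^{2}dx\geq m-\varepsilon$ for all $n$. The plan is to translate, $\tilde u_{n}:=u_{n}(\cdot+y_{n})$. Because $E^{0}_{a}$ contains no potential, it is translation invariant, so $\{\tilde u_{n}\}\subset S_{m}$ is still a minimizing sequence. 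By \eqref{equ2-2} it is bounded in $H^{s}(\mathbb{R}^{3})$. Passing to a subsequence, $\tilde u_{n}\rightharpoonup u_{0}$ weakly in $H^{s}(\mathbb{R}^{3})$ and $\tilde u_{n}\to u_{0}$ in $L^{2}_{loc}(\mathbb{R}^{3})$, by the compact local embedding of $H^{s}$.

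Next I would upgrade this to strong convergence in $L^{2}(\mathbb{R}^{3})$. Tightness gives $\int_{B_{R_{\varepsilon}}(0)}|u_{0}|^{2}dx=\lim_{n}\int_{B_{R_{\varepsilon}}(0)}|\tilde u_{n}|^{2}dx\geq m-\varepsilon$ for every $\varepsilon$. Hence $\|u_{0}\|_{2}^{2}\geq m$, and weak lower semicontinuity gives the reverse inequality, so $\|u_{0}\|_{2}^{2}=m$ and $\tilde u_{n}\to u_{0}$ in $L^{2}(\mathbb{R}^{3})$. Interpolating between $L^{2}$ and the uniform $H^{s}$ bound, via the Gagliardo–Nirenberg inequality \eqref{equ4} applied to $\tilde u_{n}-u_{0}$, yields $\tilde u_{n}\to u_{0}$ in $L^{r}(\mathbb{R}^{3})$ for all $r\in[2,2_{s}^{\ast})$. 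In particular this holds for $r=p$ and for $r=\frac{12}{3+2s}$. The latter lies in $(2,2_{s}^{\ast})$ precisely because $s>\frac34$ (one needs $\frac{12}{3+2s}<\frac{6}{3-2s}$, which is always true, and $>2$, which is true since $s<\frac32$).

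The remaining step is convergence of the nonlocal term. I would write the difference $\int\int\frac{|\tilde u_{n}(x)|^{2}|\tilde u_{n}(y)|^{2}-|u_{0}(x)|^{2}|u_{0}(y)|^{2}}{|x-y|^{3-2s}}dxdy$ as a sum of two terms, each containing a factor $|\tilde u_{n}|^{2}-|u_{0}|^{2}$. Each term is controlled, as in the estimates preceding \eqref{equ2-10}, by the Hardy–Littlewood–Sobolev inequality \eqref{equ8} and H\"older's inequality, giving a bound $C\|\tilde u_{n}+u_{0}\|_{\frac{12}{3+2s}}\|\tilde u_{n}-u_{0}\|_{\frac{12}{3+2s}}\to0$. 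The $L^{p}$ term converges by the previous step, and the kinetic term is weakly lower semicontinuous. Together these give $E^{0}_{a}(u_{0})\leq\liminf_{n}E^{0}_{a}(\tilde u_{n})=e^{0}_{m}(a)$. Since $u_{0}\in S_{m}$, equality holds and $u_{0}$ is a minimizer; as a byproduct, $\tilde u_{n}\to u_{0}$ strongly in $H^{s}(\mathbb{R}^{3})$.

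The only delicate point I expect is the precise form of the compactness alternative in \cite[Lemma 2.4]{W}, namely that it supplies uniform tightness after translation. Once that is granted, the argument is the standard lower semicontinuity argument above.
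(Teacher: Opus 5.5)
Your proposal is correct and follows essentially the same route as the paper. Both arguments translate the minimizing sequence by the compactness centers and use tightness together with $L^2_{loc}$ convergence to get $\|u_0\|_2^2=m$, which upgrades to strong $L^r$ convergence; both then pass to the limit in the Hardy--Littlewood--Sobolev term and use weak lower semicontinuity of the kinetic term. One small slip: $\frac{12}{3+2s}<\frac{6}{3-2s}$ is not ``always true'' but is equivalent to $s>\frac12$, which holds under the standing assumption $s\in(\frac34,1)$, so nothing breaks.
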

	\begin{proof}
		In view of {\bf  Claim I} and {\bf  Claim II}, there exist a sequence $\left\lbrace x_{n}\right\rbrace \subset\mathbb{R}^{3}$ and $u_{0}\in H^{s}(\mathbb{R}^{3})$ such that
		\begin{equation*}
			\int_{B_{R_{\varepsilon}}(x_{n})}|u_{n}|^{2}dx\geq m-\varepsilon\quad \hbox{for all $n\in\mathbb{N}$}.
		\end{equation*}
		Define $v_{n}(x):=u_{n}(x+x_{n})$, then we have
		\begin{equation}\label{equ2-12}
			\int_{B_{R_{\varepsilon}}(0)}|v_{n}|^{2}dx\geq m-\varepsilon.
		\end{equation}
		Since $\left\lbrace v_{n}\right\rbrace $ is uniformly bounded in $H^{s}(\mathbb{R}^{3})$, there exists $v_{0}\in H^{s}(\mathbb{R}^{3})$ such that
		\begin{equation*}
			v_{n}\rightharpoonup v_{0} \quad  \hbox{in $H^{s}(\mathbb{R}^{3})$ and $v_{n}\rightarrow v_{0}$ in $L^{2}_{loc}(\mathbb{R}^{3})$ as $n\rightarrow\infty$}.
		\end{equation*}
		Thus \eqref{equ2-9} shows that $\| v_{0}\|_{2}^{2}\geq m-\varepsilon$. This means that $\| v_{0}\|_{2}^{2}=m$ and $v_{n}\rightarrow v_{0}$ in $L^{2}(\mathbb{R}^{3})$ as $n\rightarrow\infty$. Consequently, 
		\begin{equation*}
			v_{n}\rightarrow v_{0}\quad  \hbox{in $L^{r}(\mathbb{R}^{3})$ for $r\in \left[ 2, \frac{6}{3-2s}\right) $ as $n\rightarrow\infty$},
		\end{equation*}
		from which and \eqref{equ8} we deduce that
		\begin{equation*}
			\begin{split}
				&	\left| \int_{\mathbb{R}^{3}}\int_{\mathbb{R}^{3}}\frac{|v_{n}(x)|^{2}|v_{n}(y)|^{2}}{|x-y|^{3-2s}}dxdy-\int_{\mathbb{R}^{3}}\int_{\mathbb{R}^{3}}\frac{|v_{0}(x)|^{2}|v_{0}(y)|^{2}}{|x-y|^{3-2s}}dxdy\right| \\
				\leq&\left| \int_{\mathbb{R}^{3}}\int_{\mathbb{R}^{3}}\frac{|v_{n}(x)|^{2}\left||v_{n}(y)|^{2}-|v_{0}(y)|^{2} \right| }{|x-y|^{3-2s}}dxdy\right| +\left| \int_{\mathbb{R}^{3}}\int_{\mathbb{R}^{3}}\frac{|v_{0}(y)|^{2}\left||v_{n}(x)|^{2}-|v_{0}(x)|^{2} \right| }{|x-y|^{3-2s}}dxdy\right|\\
				\leq& \| v_{n}\|_{\frac{12}{3+2s}}^{2}\| v_{n}+v_{0}\|_{\frac{12}{3+2s}}\| v_{n}-v_{0}\|_{\frac{12}{3+2s}}+\| v_{0}\|_{\frac{12}{3+2s}}^{2}\| v_{n}+v_{0}\|_{\frac{12}{3+2s}}\| v_{n}-v_{0}\|_{\frac{12}{3+2s}}\\
				\leq& C\| v_{n}-v_{0}\|_{\frac{12}{3+2s}}\rightarrow0\quad  \hbox{ as $n\rightarrow\infty$}.
			\end{split}
		\end{equation*}
		Thus we get 
		\begin{equation*}
			e^{0}_{m}(a)=\lim _{n\rightarrow\infty}E^{0}_{a}(u_{n})=\lim _{n\rightarrow\infty}E^{0}_{a}(v_{n})\geq E^{0}_{a}(v_{0})\geq e^{0}_{m}(a),
		\end{equation*}
		which shows that $v_{0}$ is a minimiser of $e^{0}_{m}(a)$. This completes the proof of proposition \ref{lem2-2}.
	\end{proof}

	Now we are ready to prove Theorem \ref{the1}.
	
	\begin{proof}[Proof of Theorem \ref{the1}] We first introduce the following constrained minimization problem
		\begin{equation*}
			e^{\infty}_{m}(a):=\inf_{u\in S_{m}}E^{\infty}_{a}(u),
		\end{equation*}
		where energy functional $E^{\infty}_{a}(u)$ is given by
		\begin{equation*}
			E^{\infty}_{a}(u):=\int_{\mathbb{R}^{3}}\left( |(-\Delta)^{\frac{s}{2}} u|^{2}+V_{\infty}\left| u\right| ^{2}\right) dx-\frac{1}{2}\int_{\mathbb{R}^{3}}\int_{\mathbb{R}^{3}}\frac{|u(x)|^{2}|u(y)|^{2}}{|x-y|^{3-2s}}dxdy-\frac{2a^{p-2}}{p}\int_{\mathbb{R}^{3}}|u|^{p}dx.
		\end{equation*}	
		By Proposition \ref{lem2-2}, we know that $e^{\infty}_{m}(a)$ admits at least one minimiser for $m>0$. In fact, if $V(x)\equiv constant\neq0$, $\int_{\mathbb{R}^{3}}V(x)\left| u\right| ^{2}dx$ is directly replaceable by the constant $Cm$. Suppose $v_{m}^{\infty}$ is a minimiser of $e^{\infty}_{m}(a)$, one can see that
		\begin{equation}\label{equ2-13}
			e_{m}(a)\leq E_{a}(v_{m}^{\infty})=E^{\infty}_{a}(v_{m}^{\infty})+\int_{\mathbb{R}^{3}}\left( V(x)-V_{\infty}\right) \left| v_{m}^{\infty}\right| ^{2}dx<E^{\infty}_{a}(v_{m}^{\infty})=e^{\infty}_{m}(a).
		\end{equation}
		Similar to \eqref{equ2-1}-\eqref{equ2-2},  there exists a minimising sequence $\left\lbrace u_{n}\right\rbrace \subset S_{m}$ such that
		\begin{equation*}
			\| u_{n}\|_{H^{s}(\mathbb{R}^{3})}\leq C,	\quad  \lim _{n\rightarrow\infty}E_{a}(u_{n})=e_{m}(a)\quad \hbox{and}\quad \|u_{n}\|_{2}^{2}=m.
		\end{equation*}
		Applying the concentration compactness lemma, there exists a subsequence, still denoted by $\left\lbrace u_{n}\right\rbrace $ for simplicity, such that vanishing, dichotomy or compactness is satisfied for this sequence.
		
		Suppose the vanishing case occurs, for any $\varepsilon>0$, there exists $R>0$ large enough, such that
		\begin{equation*}
			\max _{|x|\geq R}\left| V(x)-V_{\infty}\right| <\varepsilon\quad \hbox{and}\quad \int_{B_{R}(0)}|u_{n}|^{2}dx<C\varepsilon\quad  \hbox{as $n\rightarrow\infty$}.
		\end{equation*}
		Hence we have
		\begin{equation*}
			\int_{\mathbb{R}^{3}}\left|  V(x)-V_{\infty}\right|  \left| u_{n}\right| ^{2}dx\leq C\int_{B_{R}(0)}|u_{n}|^{2}dx+\varepsilon\int_{B^{c}_{R}(0)}|u_{n}|^{2}dx<C\varepsilon\rightarrow0\quad  \hbox{as $n\rightarrow\infty$}.
		\end{equation*}
		This implies that
		\begin{equation*}
			e_{m}(a)=\lim _{n\rightarrow\infty}E_{a}(u_{n})=\lim _{n\rightarrow\infty}E^{\infty}_{a}(u_{n})\geq e^{\infty}_{m}(a),
		\end{equation*}
		which contradicts \eqref{equ2-13}, and the vanishing case does not occur.
		
		Now we claim that there exists some $\delta>0$ such that
		\begin{equation*}
			\liminf_{n\rightarrow\infty}\int_{\mathbb{R}^{3}}|u_{n}|^{p}dx\geq \delta>0.
		\end{equation*}
		Indeed, up to a subsequence of $\left\lbrace u_{n}\right\rbrace $, if $\int_{\mathbb{R}^{3}}|u_{n}|^{p}dx\rightarrow0$ as $n\rightarrow\infty$, then there exists $R>0$ large enough, such that
		\begin{equation*}
			\begin{split}
				\int_{\mathbb{R}^{3}}\left|  V(x)-V_{\infty}\right|  \left| u_{n}\right| ^{2}dx
				&\leq C\left( \int_{B_{R}(0)}|u_{n}|^{p}dx\right) ^{\frac{2}{p}}+\varepsilon\int_{B^{c}_{R}(0)}|u_{n}|^{2}dx\\
				&\leq C\left( \int_{\mathbb{R}^{3}}|u_{n}|^{p}dx\right) ^{\frac{2}{P}}+C\varepsilon\rightarrow0\quad  \hbox{as $n\rightarrow\infty$}.
			\end{split}
		\end{equation*}
		This implies that
		\begin{equation*}
			e_{m}(a)=\lim _{n\rightarrow\infty}E_{a}(u_{n})=\lim _{n\rightarrow\infty}E^{\infty}_{a}(u_{n})\geq e^{\infty}_{m}(a),
		\end{equation*}
		which contradicts \eqref{equ2-13}. Thus, via the same argument as in {\bf  Claim II}, we deduce that the dichotomy case cannot occur.
		
		Similar to the proof of Proposition \ref{lem2-2}, there exist a sequence $\left\lbrace x_{n}\right\rbrace \subset\mathbb{R}^{3}$ and $u_{0}\in H^{s}(\mathbb{R}^{3})$ such that
		\begin{equation*}
			u_{n}(x+x_{n})\rightarrow u_{0}(x)\quad  \hbox{in $L^{r}(\mathbb{R}^{3})$ for $r\in \left[ 2, \frac{6}{3-2s}\right) $ as $n\rightarrow\infty$}.
		\end{equation*}
		We now prove that $\left\lbrace x_{n}\right\rbrace $ is uniformly bounded in $\mathbb{R}^{3}$. Assume that $|x_{n}| \rightarrow\infty$ as $n\rightarrow\infty$, set $\check{u}_{n}(x):=u_{n}(x+x_{n})$, one can check that
		\begin{equation*}
			\int_{\mathbb{R}^{3}}V(x) \left| u_{n}\right| ^{2}dx=\int_{\mathbb{R}^{3}}V(x+x_{n}) |\check{u}_{n}|^{2}dx\rightarrow V_{\infty}	\int_{\mathbb{R}^{3}} \left| u_{0}\right| ^{2}dx\quad  \hbox{as $n\rightarrow\infty$}.
		\end{equation*}
		Thus we obtain that
		\begin{equation*}
			e_{m}(a)=\lim _{n\rightarrow\infty}E_{a}(u_{n})=\lim _{n\rightarrow\infty}E^{\infty}_{a}(u_{0})\geq e^{\infty}_{m}(a),
		\end{equation*}
		which contradicts \eqref{equ2-13}, and $\left\lbrace x_{n}\right\rbrace $ is uniformly bounded in $\mathbb{R}^{3}$. Suppose $x_{n}\rightarrow x_{0}$ as $n\rightarrow\infty$. Let $y_{n}=x_{n}-x_{0}$ and $\dot{u}_{n}(x):=u_{n}(x+y_{n})$, then there exists $\dot{u}_{0}\in H^{s}(\mathbb{R}^{3})$ such that
		\begin{equation*}
			\dot{u}_{n}\rightarrow \dot{u}_{0}\quad  \hbox{in $L^{r}(\mathbb{R}^{3})$ for $r\in \left[ 2, \frac{6}{3-2s}\right) $ as $n\rightarrow\infty$}.
		\end{equation*}
		Furthermore, one can see that
		\begin{equation*}
			e_{m}(a)=\lim _{n\rightarrow\infty}E_{a}(u_{n})=\lim _{n\rightarrow\infty}E_{a}(	\dot{u}_{n})\geq E_{a}(	\dot{u}_{0}) \geq e_{m}(a),
		\end{equation*}
		which shows that $\dot{u}_{0}$ is a minimiser of $e_{m}(a)$. This completes the proof of Theorem \ref{the1}.
	\end{proof}
	
	\section{Asymptotic behavior }\label{sec 3}
	
	We investigate in detail the asymptotic behavior of the minimizers for $e_{1}(a)$ as $a\rightarrow\infty$ in the present section. First, we establish following energy estimate.
	
	\begin{lemma}\label{lem3-1}
		Supppose $V(x)$ satisfies $(V_{1})$, then we have
		\begin{equation}\label{equ3-1}
			\lim_{a\rightarrow\infty}\frac{e_{1}(a)}{\left(\frac{a}{\sqrt{a^{\ast}}}\right)^{\frac{4s(p-2)}{4s+6-3p}}}=\frac{3p-6-4s}{6-(3-2s)p},
		\end{equation}
		where $a^{\ast}:=\|Q\|_{2}^{2}$ and $Q>0$ is the unique positive solution of \eqref{equ5}.
	\end{lemma}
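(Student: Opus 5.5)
The plan is a scaling argument. Set
\[
\varepsilon:=\Big(\frac{a}{\sqrt{a^{\ast}}}\Big)^{\frac{2p-4}{3p-6-4s}}\to 0 \quad\hbox{as } a\rightarrow\infty,
\]
so that $\varepsilon^{-2s}=\big(\frac{a}{\sqrt{a^{\ast}}}\big)^{\frac{4s(p-2)}{4s+6-3p}}$.

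\emph{Scaling of each term.} For $u\in S_{1}$ write $u(x)=\varepsilon^{-3/2}w(x/\varepsilon)$, so that $w\in S_{1}$. Then the terms of $E_{a}(u)$ scale as follows:
\begin{itemize}
\item the kinetic term becomes $\varepsilon^{-2s}\int|(-\Delta)^{\frac s2}w|^{2}$;
\item the nonlinear term becomes $\frac{2a^{p-2}}{p}\varepsilon^{-\frac{3(p-2)}{2}}\int|w|^{p}$, and by the choice of $\varepsilon$ the prefactor $a^{p-2}\varepsilon^{-\frac{3(p-2)}{2}}$ equals $\varepsilon^{-2s}(\sqrt{a^{\ast}})^{p-2}$;
\item the Hartree term becomes $\varepsilon^{-(3-2s)}$ times the corresponding integral of $w$;
\item the potential term stays $O(1)$, since $V\in L^{\infty}$.
\end{itemize}
Because $s>\frac34$ we have $3-2s<2s$, so the Hartree and potential terms are $o(\varepsilon^{-2s})$. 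Hence the limit in \eqref{equ3-1} should equal
\[
\tilde e_{1}(\sqrt{a^{\ast}}):=\inf_{w\in S_{1}}\Big\{\int_{\mathbb{R}^{3}}|(-\Delta)^{\frac s2}w|^{2}dx-\frac{2(\sqrt{a^{\ast}})^{p-2}}{p}\int_{\mathbb{R}^{3}}|w|^{p}dx\Big\},
\]
and I will show that $\tilde e_{1}(\sqrt{a^{\ast}})=\frac{3p-6-4s}{6-(3-2s)p}$.

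\emph{Step 1: computing $\tilde e_{1}$.} For $w\in S_{1}$ put $K=\int|(-\Delta)^{\frac s2}w|^{2}$. By \eqref{equ4},
\[
\int|w|^{p}\le C_{opt}K^{\frac{3(p-2)}{4s}},
\]
so the functional is bounded below by $f(K)=K-\frac{2(a^{\ast})^{\frac{p-2}{2}}}{p}C_{opt}K^{\frac{3(p-2)}{4s}}$. Since $\frac{3(p-2)}{4s}<1$, $f$ has a unique minimum point on $(0,\infty)$, and I will compute $\min f$ using the explicit value of $C_{opt}$. Equality in \eqref{equ4} holds for $w=Q/\sqrt{a^{\ast}}$ (up to dilations $t^{3/2}Q(tx)$), so the bound $\min f$ is attained. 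A quicker check is to evaluate directly at $w=Q/\sqrt{a^{\ast}}$ using \eqref{equ7}: the kinetic term is $\frac{3(p-2)}{6-(3-2s)p}$ and the nonlinear term is $\frac{2}{p}\int Q^{p}/a^{\ast}=\frac{4s}{6-(3-2s)p}$. Their difference is $\frac{3p-6-4s}{6-(3-2s)p}$. One must also verify that $t=1$ is the optimal dilation; this holds because $Q$ solves \eqref{equ5} and \eqref{equ7} is exactly the stationarity condition in $t$. This algebra is the step I expect to be the most error-prone, but it is purely computational.

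\emph{Step 2: upper bound.} Take the trial function $u_{\varepsilon}(x)=\varepsilon^{-3/2}Q\big(\frac{x-x_{1}}{\varepsilon}\big)/\sqrt{a^{\ast}}$ with $x_{1}$ arbitrary. By the scaling above,
\[
E_{a}(u_{\varepsilon})\le \varepsilon^{-2s}\,\tilde e_{1}(\sqrt{a^{\ast}})+\|V\|_{\infty},
\]
where the Hartree term is simply dropped since it is nonnegative and enters with a minus sign. Dividing by $\varepsilon^{-2s}$ gives
\[
\limsup_{a\rightarrow\infty}\varepsilon^{2s}e_{1}(a)\le\tilde e_{1}(\sqrt{a^{\ast}}).
\]

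\emph{Step 3: lower bound.} Let $u\in S_{1}$ with $E_{a}(u)\le e_{1}(a)+1$, which is negative for large $a$ by Step 2. After rescaling, combining $V\ge -\|V\|_{\infty}$, \eqref{equ2-1} and \eqref{equ4} gives
\[
\varepsilon^{2s}E_{a}(u)\ge K_{w}-C K_{w}^{\frac{3(p-2)}{4s}}-C\varepsilon^{4s-3}K_{w}^{\frac{3-2s}{2s}}-C\varepsilon^{2s},
\]
where $K_{w}=\int|(-\Delta)^{\frac s2}w|^{2}$. Both exponents $\frac{3(p-2)}{4s}$ and $\frac{3-2s}{2s}$ are less than $1$, and $\varepsilon^{2s}E_{a}(u)$ is bounded above. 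Hence $K_{w}$ is bounded uniformly in $a$. Consequently
\[
\varepsilon^{2s}E_{a}(u)\ge\tilde e_{1}(\sqrt{a^{\ast}})-C\varepsilon^{4s-3}-C\varepsilon^{2s},
\]
so $\liminf_{a\rightarrow\infty}\varepsilon^{2s}e_{1}(a)\ge\tilde e_{1}(\sqrt{a^{\ast}})$. Combining with Step 2 and Step 1 yields \eqref{equ3-1}. The only delicate point is this a priori bound on $K_{w}$, which is what makes the Hartree term negligible; it relies on $s>\frac34$ so that $4s-3>0$.
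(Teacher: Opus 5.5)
Your proof is correct. Its architecture matches the paper's: rescale by $\varepsilon_a$, observe that once $\int_{\mathbb{R}^3}|(-\Delta)^{\frac s2}w|^2dx$ is bounded the Hartree and potential terms cost only $O(\varepsilon_a^{4s-3})$ and $O(\varepsilon_a^{2s})$ (this is where $s>\frac34$ enters), and compare with the auxiliary level $\tilde e_1(\sqrt{a^\ast})$.

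The real difference is how you evaluate that level. The paper first proves that $\tilde e_1(a)$ is attained, as in Proposition \ref{lem2-2}. It then scales out $a$, writes the Euler--Lagrange equation, shows the minimizer is positive, and invokes uniqueness of the positive solution of \eqref{equ5} to identify it as a rescaled $Q$, reading the energy off \eqref{equ7}. You instead take the lower bound directly from the sharp inequality \eqref{equ4} and the upper bound from the test function $Q/\sqrt{a^\ast}$.

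The algebra does close. With $D=6-(3-2s)p$, $\gamma=\frac{3(p-2)}{4s}$ and $B=\frac{4s}{D}\bigl(\frac{D}{3p-6}\bigr)^{\gamma}$, the function $K-BK^{\gamma}$ is minimized at $K_0=\frac{3(p-2)}{D}$, with value $K_0(1-\gamma^{-1})=\frac{3p-6-4s}{D}$. Keep this computation as the actual content of Step 1. Your remark that $t=1$ is stationary among dilations of $Q$ only shows optimality within that one-parameter family, so by itself it gives only the upper bound.

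Your route is shorter. It needs no existence of minimizers for either $\tilde e_1$ or $e_1$, since you work with approximate minimizers, and it skips the positivity/uniqueness step. Your untruncated trial function is legitimate because $V\in L^\infty$ and the Hartree term enters with a favorable sign. Your use of $V\ge-\|V\|_\infty$ is also slightly more careful than \eqref{equ3-9}, which drops the potential term even though $(V_1)$ imposes no sign on $V$ (harmlessly, since that term is $O(\varepsilon_a^{2s})$).

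What the paper's route buys is the explicit description \eqref{equ3-3} of the minimizers of $\tilde e_1(a)$. This is reused in Lemma \ref{lem3-2} to identify the blow-up limit as $Q/\sqrt{a^\ast}$. With your argument, that identification would additionally require the characterization of the equality cases in \eqref{equ4}.
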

	\begin{proof}
		Set a cut-off function $\varphi\in C_{0}^{\infty}(\mathbb{R}^{3})$ such that $\varphi(x)=1$ for $|x|\leq1$, $\varphi(x)=0$ for $|x|\geq2$, $0\leq\varphi\leq1$ and $|\nabla\varphi|\leq2$. Define
		\begin{equation*}
			u_{\tau}(x):=A_{\tau}\frac{\tau^{\frac{3}{2}}}{\|Q\|_{2}}\varphi(x)Q(\tau x),
		\end{equation*}
		where $\tau>0$ and $A_{\tau}>0$ is chosen so that $\|u_{\tau}\|_{2}^{2}=1$. Using \eqref{equ6}, one has $A_{\tau}\rightarrow1$ as $\tau\rightarrow\infty$. By \eqref{equ6}, \eqref{equ7} and \cite[Lemma 3.2]{D}, it follows that
		\begin{equation*}
			E_{a}(u_{\tau})\leq\frac{3(p-2)}{6-(3-2s)p}\tau^{2s}+V(0)-\frac{4s}{6-(3-2s)p}\left(\frac{a}{\sqrt{a^{\ast}}}\right)^{p-2}\tau^{\frac{3p-6}{2}}+o(1) \quad \hbox{as $\tau\rightarrow\infty$}.
		\end{equation*}
		Taking the infimum with respect to $\tau>0$ yields that
		\begin{equation}\label{equ3-2}
			e_{1}(a)\leq	E_{a}(u_{\tau})\leq\frac{3p-6-4s}{6-(3-2s)p}\left(\frac{a}{\sqrt{a^{\ast}}}\right)^{\frac{4s(p-2)}{4s+6-3p}}(1+o(1))\quad \hbox{as $\tau\rightarrow\infty$}.
		\end{equation}
		To prove the lower bound of $e_{1}(a)$, we introduce the following minimization problem
		\begin{equation}\label{equ3-2-1}
			\tilde{e}_{1}(a):=\inf_{u\in S_{1}}\tilde{E}_{a}(u),
		\end{equation}
		where $\tilde{E}_{a}(u)$ is defined by
		\begin{equation*}
			\tilde{E}_{a}(u):=\int_{\mathbb{R}^{3}}|(-\Delta)^{\frac{s}{2}}u|^{2}dx-\frac{2a^{p-2}}{p}\int_{\mathbb{R}^{3}}|u|^{p}dx.
		\end{equation*}
		Similar to the proof of Proposition \ref{lem2-2}, one can derive that $\tilde{e}_{1}(a)$ admits at least one minimiser for $a>0$. Suppose $\tilde{u}_{a}$ is a nonnegative minimizer of $\tilde{e}_{1}(a)$, let $\tilde{\alpha}_{a}:=\left(\frac{a}{\sqrt{a^{\ast}}}\right)^{\frac{2(p-2)}{4s-3(p-2)}}$, we first claim that
		\begin{equation}\label{equ3-3}
			\tilde{e}_{1}(a)=\frac{3p-6-4s}{6-(3-2s)p}\left(\frac{a}{\sqrt{a^{\ast}}}\right)^{\frac{4s(p-2)}{4s+6-3p}}\quad \hbox{and}\quad \tilde{u}_{a}(x)=\frac{1}{\sqrt{a^{\ast}}}\tilde{\alpha}_{a}^{\frac{3}{2}}Q(\tilde{\alpha}_{a}x).
		\end{equation}
		Indeed, we shall prove that
		\begin{equation}\label{equ3-4}
			\tilde{e}_{1}(a)=a^{\frac{4s(p-2)}{4s-3(p-2)}}\tilde{e}_{1}(1),\quad \tilde{u}_{a}(x)=\alpha_{a}^{\frac{3}{2}}\tilde{u}_{1}(\alpha_{a}x)\quad \hbox{with}\quad \alpha_{a}=a^{\frac{2(p-2)}{4s-3(p-2)}}.
		\end{equation}
		Define $\tilde{v}_{1}(x):=\alpha_{a}^{-\frac{3}{2}}\tilde{u}_{a}(\alpha_{a}^{-1}x)$, one can check that
		\begin{equation*}
			\tilde{e}_{1}(a)=\tilde{E}_{a}(\tilde{u}_{a})=a^{\frac{4s(p-2)}{4s-3(p-2)}}\left[\int_{\mathbb{R}^{3}}|(-\Delta)^{\frac{s}{2}}\tilde{v}_{1}|^{2}dx-\frac{2}{p}\int_{\mathbb{R}^{3}}|\tilde{v}_{1}|^{p}dx\right]\geq a^{\frac{4s(p-2)}{4s-3(p-2)}}\tilde{e}_{1}(1).
		\end{equation*}
		Define $\tilde{v}_{a}(x):=\alpha_{a}^{\frac{3}{2}}\tilde{u}_{1}(\alpha_{a}x)$, we also have
		\begin{equation*}
			\tilde{e}_{1}(a)\leq\tilde{E}_{a}(\tilde{v}_{a})=a^{\frac{4s(p-2)}{4s-3(p-2)}}\tilde{e}_{1}(1).
		\end{equation*}
		Thus \eqref{equ3-4} is proved. We next prove that
		\begin{equation}\label{equ3-5}
			\tilde{e}_{1}(1)=\frac{3p-6-4s}{6-(3-2s)p}(\sqrt{a^{\ast}})^{-\frac{4s(p-2)}{4s-3(p-2)}}
		\end{equation}
		and $\tilde{u}_{1}$ satisfies
		\begin{equation}\label{equ3-6}
			\tilde{u}_{1}(x)=(\sqrt{a^{\ast}})^{-\frac{4s}{4s-3(p-2)}}Q\left[(\sqrt{a^{\ast}})^{-\frac{2(p-2)}{4s-3(p-2)}}x\right].
		\end{equation}
		Note that
		\begin{equation*}
			(-\Delta)^{s}\tilde{u}_{1}=\tilde{\mu}_{1}\tilde{u}_{1}+\tilde{u}_{1}^{p-1}\quad\hbox{in $\mathbb{R}^3$},
		\end{equation*}
		where $\tilde{\mu}_{1}\in\mathbb{R}$ is a Lagrange multiplier. One can obtain that
		\begin{equation*}
			\tilde{\mu}_{1}=\tilde{e}_{1}(1)-\frac{p-2}{p}\int_{\mathbb{R}^{3}}|\tilde{u}_{1}|^{p}dx<0.
		\end{equation*}
		Similar to the proof of \cite[Proposition 4.1]{T}, we get that $\tilde{u}_{1}>0$. This implies that
		\begin{equation}\label{equ3-7}
			\tilde{u}_{1}(x)=(-\tilde{\mu}_{1})^{\frac{1}{p-2}}Q\left[(-\tilde{\mu}_{1})^{\frac{1}{2s}}x\right],
		\end{equation}
		due to the fact that $Q>0$ is the unique positive solution of \eqref{equ5}. Applying $\|\tilde{u}_{1}\|_{2}^{2}=1$, one can deduce that $\tilde{\mu}_{1}=-(\sqrt{a^{\ast}})^{-\frac{4s(p-2)}{4s-3(p-2)}}$, from which and \eqref{equ3-7} we obtain that \eqref{equ3-6} holds. Moreover, \eqref{equ7} and \eqref{equ3-7} show that \eqref{equ3-5} holds. In view of \eqref{equ3-4}-\eqref{equ3-6}, one can conclude that \eqref{equ3-3} holds, the claim is thus established.
		
		We now establish the lower bound estimate of $e_{1}(a)$. Let $u_{a}$ be a nonnegative minimizer of $e_{1}(a)$. Define
		\begin{equation}\label{equ3-8}
			v_{a}(x):=\varepsilon_{a}^{\frac{3}{2}}u_{a}(\varepsilon_{a}x)\quad \hbox{with}\quad \varepsilon_{a}:=\left(\frac{a}{\sqrt{a^{\ast}}}\right)^{\frac{2p-4}{3p-6-4s}}.
		\end{equation}
		It then follows from \eqref{equ4} and \eqref{equ8} that
		\begin{equation}\label{equ3-9}
			\begin{split}
				\varepsilon_{a}^{2s}e_{1}(a)=&\varepsilon_{a}^{2s} E_{a}(u_{a})\\
				\geq&\int_{\mathbb{R}^{3}}|(-\Delta)^{\frac{s}{2}} v_{a}|^{2}dx-\frac{\varepsilon_{a}^{4s-3}}{2}\int_{\mathbb{R}^{3}}\int_{\mathbb{R}^{3}}\frac{v_{a}^{2}(x)v_{a}^{2}(y)}{|x-y|^{3-2s}}dxdy-\frac{2(\sqrt{a^{\ast}})^{p-2}}{p}\int_{\mathbb{R}^{3}}|v_{a}|^{p}dx\\
				\geq&\int_{\mathbb{R}^{3}}|(-\Delta)^{\frac{s}{2}} v_{a}|^{2}dx-C\varepsilon_{a}^{4s-3}\left( \int_{\mathbb{R}^{3}}|(-\Delta)^{\frac{s}{2}} v_{a}|^{2}dx\right)^{\frac{6-4s}{4s}}\\
				&-C\left( \int_{\mathbb{R}^{3}}|(-\Delta)^{\frac{s}{2}} v_{a}|^{2}dx\right)^{\frac{3(p-2)}{4}},
			\end{split}
		\end{equation}
		from which and \eqref{equ3-2} we obtain that
		\begin{equation}\label{equ3-10}
			\int_{\mathbb{R}^{3}}|(-\Delta)^{\frac{s}{2}} v_{a}|^{2}dx\leq C.
		\end{equation}
		Combining with \eqref{equ8}, one can see that
		\begin{equation}\label{equ3-11}
			\int_{\mathbb{R}^{3}}\int_{\mathbb{R}^{3}}\frac{v_{a}^{2}(x)v_{a}^{2}(y)}{|x-y|^{3-2s}}dxdy\leq C\left( \int_{\mathbb{R}^{3}}|(-\Delta)^{\frac{s}{2}}  v_{a}|^{2}dx\right)^{\frac{6-4s}{4s}}\leq C.
		\end{equation}
		Using \eqref{equ3-9} and \eqref{equ3-11}, one can check that
		\begin{equation}\label{equ3-12}
			\begin{split}
				\varepsilon_{a}^{2s}e_{1}(a)&\geq\int_{\mathbb{R}^{3}}|(-\Delta)^{\frac{s}{2}}  v_{a}|^{2}dx-\frac{2(\sqrt{a^{\ast}})^{p-2}}{p}\int_{\mathbb{R}^{3}}|v_{a}|^{p}dx+o(1)\\
				&\geq\tilde{e}_{1}(\sqrt{a^{\ast}})+o(1)\quad \hbox{as $a\rightarrow\infty$},
			\end{split}
		\end{equation}
		Thus \eqref{equ3-3} yields that
		\begin{equation}\label{equ3-13}
			\frac{e_{1}(a)}{\left(\frac{a}{\sqrt{a^{\ast}}}\right)^{\frac{4s(p-2)}{4s+6-3p}}}\geq\frac{3p-6-4s}{6-(3-2s)p}\quad \hbox{as $a\rightarrow\infty$}.
		\end{equation}
		It then yield from \eqref{equ3-2} and \eqref{equ3-13} that \eqref{equ3-1} holds, and this completes the proof.
	\end{proof}
	
	For any given sequence $\{a_{k}\}$, of $\{a_{k}\}$ with $a_{k}\rightarrow\infty$ as $k\rightarrow\infty$, we denote $u_{k}$ be a nonnegative minimizer of $e_{1}(a_{k})$.
	\begin{lemma}\label{lem3-2}
		Let $u_{k}$ be a nonnegative minimizer of $e_{1}(a_{k})$ with $a_{k}\rightarrow\infty$ as $k\rightarrow\infty$, then we have\\
		(1) There exist a sequence $\{y_{\varepsilon_{k}}\}$ and positive constants $R_{0}$ and $\eta$ such that the normalized function
		\begin{equation}\label{equ3-14}
			w_{k}(x):=\varepsilon_{k}^{\frac{3}{2}}u_{k}(\varepsilon_{k}x+\varepsilon_{k}y_{\varepsilon_{k}})\quad \hbox{with}\quad \varepsilon_{k}:=\left(\frac{a_{k}}{\sqrt{a^{\ast}}}\right)^{\frac{2p-4}{3p-6-4s}}
		\end{equation}
		satisfies
		\begin{equation}\label{equ3-15}
			\liminf\limits_{k\rightarrow\infty}\int_{B_{R_{0}}(0)}|w_{k}|^{2}dx\geq\eta>0.
		\end{equation}
		Moreover, up to a subsequence, there holds $z_{k}:=\varepsilon_{k}y_{\varepsilon_{k}}\rightarrow x_{0}$ as $k\rightarrow\infty$, and $x_{0}$ satisfies $V(x_{0})=0$.\\
		(2) Passing to a subsequence if necessary, it holds that
		\begin{equation}\label{equ3-16}
			\lim_{k\rightarrow\infty}w_{k}(x)=\frac{Q(x)}{\sqrt{a^{\ast}}}\quad \hbox{in $H^{s}(\mathbb{R}^{3})$}.
		\end{equation}
		Moreover, we have $\mu_{k}\varepsilon_{k}^{2s}\rightarrow-1$ as $k\rightarrow\infty$.
	\end{lemma}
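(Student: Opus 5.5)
The plan is to exploit the energy asymptotics of Lemma \ref{lem3-1} through the rescaled functions $v_{k}(x):=\varepsilon_{k}^{\frac32}u_{k}(\varepsilon_{k}x)$. One caveat on notation: for the exponent $\frac{2p-4}{3p-6-4s}$ (which is negative), $\varepsilon_{k}\to0$, and the scaling is set so that \eqref{equ3-12} holds.

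\emph{Bounds from the energy estimate.} From \eqref{equ3-10} we have $\int|(-\Delta)^{s/2}v_{k}|^{2}\le C$. The Poisson term in $\varepsilon_k^{2s}E_{a_k}(u_k)$ carries the factor $\varepsilon_{k}^{4s-3}$; since $s>\frac34$, this factor tends to $0$. Hence, combining \eqref{equ3-2}, \eqref{equ3-12} and \eqref{equ3-3},
\begin{equation*}
\int_{\mathbb{R}^3}|(-\Delta)^{\frac s2}v_{k}|^{2}dx-\frac{2(\sqrt{a^{\ast}})^{p-2}}{p}\int_{\mathbb{R}^3}|v_{k}|^{p}dx\to\tilde e_{1}(\sqrt{a^{\ast}}),
\end{equation*}
and $\varepsilon_{k}^{2s}\int V(\varepsilon_{k}x)|v_k|^2\to0$. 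The last point actually needs more: $\int V|u_{k}|^{2}=o(1)$ in the original scale. I will get it by comparing with the trial function centred at a zero of $V$. The upper bound \eqref{equ3-2} with $\varphi$ centred at $x_{i}$ gives an additive $V(x_i)+o(1)=o(1)$ beyond $\tilde e_{1}(a_k)$, while the lower bound gives $e_{1}(a_k)\ge\tilde e_1(a_k)+\int V u_k^2-C\varepsilon_k^{-2s}\varepsilon_k^{4s-3}$. Hence $\int Vu_k^2\to0$, provided $\varepsilon_k^{2s-3}\to0$ in the original scale. If that fails, I keep the error inside the $o(1)$ after rescaling, which suffices for the profile but not directly for locating $x_0$.

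\emph{Nonvanishing and part (1).} Since $\tilde e_1(\sqrt{a^*})<0$, we get $\liminf\int|v_k|^p>0$. By the vanishing lemma of \cite{S} (as in Claim I), there exist $y_{\varepsilon_k}$, $R_0$ and $\eta$ with $\int_{B_{R_0}(y_{\varepsilon_k})}|v_k|^2\ge\eta$, which is \eqref{equ3-15} for $w_k$. Now suppose $z_k=\varepsilon_ky_{\varepsilon_k}$ either escapes to infinity or converges to a point with $V>0$. Using $V\ge0$ (the minimum value is $0$ by $(V_2)$) and Fatou's lemma on $\int V(\varepsilon_kx+z_k)w_k^2\,dx\ge\int_{B_{R_0}}(\cdots)$, we obtain $\liminf\int Vu_k^2\ge\eta\,\liminf V(z_k)>0$. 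In the escaping case we use $V_\infty=\sup V>0$. This contradicts $\int Vu_k^2\to0$. So $z_k$ is bounded and, along a subsequence, $z_k\to x_0$ with $V(x_0)=0$.

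\emph{Part (2).} The sequence $w_k$ is bounded in $H^s$ with $\|w_k\|_2=1$ and is a minimizing sequence for $\tilde e_1(\sqrt{a^*})$ (translation invariance). Take a weak limit $w_k\rightharpoonup w_0\neq0$. The dichotomy argument of Claim II applied to $\tilde E$ (strict subadditivity \eqref{equ2-9}) rules out mass loss, so $\|w_0\|_2=1$, convergence is strong in $L^r$, and $w_0$ minimizes $\tilde e_1(\sqrt{a^*})$. By \eqref{equ3-3} with $a=\sqrt{a^*}$ we have $\tilde\alpha=1$, so $w_0=Q(\cdot-\bar y)/\sqrt{a^*}$. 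I absorb $\bar y$ into $y_{\varepsilon_k}$. The energies converge, so the $\dot H^s$ norms converge, and weak convergence upgrades to strong $H^s$ convergence.

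\emph{The multiplier.} From the Euler--Lagrange equation, $\mu_k=e_1(a_k)-\int Vu_k^2\cdot0+\dots$; precisely, $\mu_k=E_{a_k}(u_k)-\frac12\iint\frac{u_k^2u_k^2}{|x-y|^{3-2s}}-\frac{(p-2)a_k^{p-2}}{p}\int u_k^p$. Multiplying by $\varepsilon_k^{2s}$ and passing to the limit gives
\begin{equation*}
\varepsilon_k^{2s}\mu_k\to\tilde e_1(\sqrt{a^*})-\frac{(p-2)(\sqrt{a^*})^{p-2}}{p}\frac{\int Q^p}{(a^*)^{p/2}}.
\end{equation*}
By \eqref{equ7} this equals $-1$.

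The main obstacle is the step $\int Vu_k^2\to0$, i.e.\ controlling the Poisson term in the unscaled energy. I would first try to use \eqref{equ3-2} at the exact scale with $V(x_i)=0$, together with the sharp inequality $\tilde E_{a}\ge\tilde e_1(a)$, so that everything except $\int Vu_k^2$ and the Poisson term cancels exactly.
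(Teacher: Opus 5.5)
\textbf{Gap in part (1): the estimate $\int_{\mathbb{R}^3}V|u_k|^2dx\to0$ is not established, so $z_k\to x_0$ with $V(x_0)=0$ is not proved.}

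Your part (2) and your computation of $\mu_k\varepsilon_k^{2s}$ follow the paper's route. You treat $w_k$ as a minimizing sequence for $\tilde e_1(\sqrt{a^{\ast}})$, use strict subadditivity to rule out mass loss, identify the limit via \eqref{equ3-3}, and pass to the limit in the rescaled \eqref{equ4-2}. Absorbing the translation $\bar y$ into $y_{\varepsilon_k}$ is in fact more careful than the paper.

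The problem is the step $\int V u_k^2\to0$, on which your localization of $x_0$ rests.
\begin{itemize}
\item Your proviso $\varepsilon_k^{2s-3}\to0$ is false. Since $\varepsilon_k\to0$ and $2s-3<0$, we have $\varepsilon_k^{2s-3}\to\infty$. The Poisson energy of $u_k$ in the original scale is of exactly this order, so your comparison only gives $\int Vu_k^2\le C\varepsilon_k^{2s-3}+o(1)$, which is vacuous.
\item Your fallback carries no information. After rescaling, the potential term is $\varepsilon_k^{2s}\int V(\varepsilon_kx+z_k)w_k^2\,dx$. This tends to $0$ only because $V$ is bounded, and says nothing about where $z_k$ goes. (The paper's \eqref{equ3-21} is the same trivially true statement; the paper obtains $\int Vu_k^2\to0$ by citing an external lemma.)
\item The refinement in your last paragraph does not close it either. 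Keeping the Poisson terms on both sides, you would need $\iint\frac{u_k^2(x)u_k^2(y)}{|x-y|^{3-2s}}dxdy-\iint\frac{u_\tau^2(x)u_\tau^2(y)}{|x-y|^{3-2s}}dxdy=o(1)$. That amounts to a rate $o(\varepsilon_k^{3-2s})$ for the convergence $w_k\to Q/\sqrt{a^{\ast}}$, and nothing in your argument supplies such a rate.
\end{itemize}

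\textbf{The fix: compare $u_k$ with its own translate.} The kinetic, Poisson and $L^p$ terms are all translation invariant. Hence for any $x_i$ with $V(x_i)=0$, the inequality $E_{a_k}(u_k)\le E_{a_k}\bigl(u_k(\cdot+z_k-x_i)\bigr)$ gives, with no Poisson error at all,
\begin{equation*}
\int_{\mathbb{R}^3}V(x)u_k^2(x)\,dx\le\int_{\mathbb{R}^3}V(\varepsilon_kx+x_i)\,w_k^2(x)\,dx .
\end{equation*}
The right-hand side tends to $V(x_i)=0$ provided $\{w_k^2\}$ is tight. Tightness follows from the strong $L^2$ convergence of part (2), and part (2) does not use the location of $z_k$. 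So prove part (2) first, then obtain $\int Vu_k^2\to0$ as above, and then run your Fatou/contradiction argument to show $z_k$ is bounded with limit $x_0$ satisfying $V(x_0)=0$. This is the same comparison the paper uses in \eqref{equ3-37}.
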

	\begin{proof}
		(1) Define $v_{k}(x):=\varepsilon_{k}^{\frac{3}{2}}u_{k}(\varepsilon_{k}x)$, we claim that there exist some positive constants $C_{1}, C_{2}, C'_{1}$ and $C'_{2}$ such that
		\begin{equation}\label{equ3-17}
			C_{1}\leq\int_{\mathbb{R}^{3}}|v_{k}|^{p}dx\leq C_{2}\quad \hbox{and}\quad C'_{1}\leq\int_{\mathbb{R}^{3}}|(-\Delta)^{\frac{s}{2}}v_{k}|^{2}dx\leq C'_{2}.
		\end{equation}
		In fact, similar to \eqref{equ3-9}-\eqref{equ3-12}, one can see that
		\begin{equation}\label{equ3-18}
			0>\frac{3p-6-4s}{6-(3-2s)p}\geq\int_{\mathbb{R}^{3}}|(-\Delta)^{\frac{s}{2}}  v_{k}|^{2}dx-\frac{2(\sqrt{a^{\ast}})^{p-2}}{p}\int_{\mathbb{R}^{3}}|v_{k}|^{p}dx\quad \hbox{as $k\rightarrow\infty$}.
		\end{equation}
		It then follows from \eqref{equ4} and \eqref{equ3-10} that
		\begin{equation*}
			\int_{\mathbb{R}^{3}}|v_{k}|^{p}dx\leq C_{2}\quad \hbox{and}\quad \int_{\mathbb{R}^{3}}|(-\Delta)^{\frac{s}{2}}v_{k}|^{2}dx\leq C'_{2},
		\end{equation*}
		from which and \eqref{equ3-18} we deduce that $\int_{\mathbb{R}^{3}}|v_{k}|^{p}dx\geq C_{1}$. Then \eqref{equ4} yields that $\int_{\mathbb{R}^{3}}|(-\Delta)^{\frac{s}{2}}v_{k}|^{2}dx\geq C'_{1}$, and hence \eqref{equ3-17} holds. We now prove that there exist a sequence $\{y_{\varepsilon_{k}}\}\subset\mathbb{R}^{3}$ and $R_{0}, \eta>0$ such that
		\begin{equation}\label{equ3-19}
			\liminf_{k\rightarrow\infty}\int_{B_{R_{0}}(y_{\varepsilon_{k}})}v_{k}^{2}dx\geq\eta>0.
		\end{equation}
		If not, then the Vanishing Lemma shows that $v_{k}\rightarrow 0$ in $L^{p}(\mathbb{R}^{3})$ as $k\rightarrow\infty$, which contradicts \eqref{equ3-17}, \eqref{equ3-15} thus follows immediately from \eqref{equ3-19}. Similar to the proof of \cite[Lemma 5.2]{LL}, one can derive that
		$\int_{\mathbb{R}^{3}}V(x)|u_{k}|^{2}dx\rightarrow0$ as $k\rightarrow\infty$, from which and \eqref{equ3-14} we can deduce that
		\begin{equation*}
			\int_{\mathbb{R}^{3}}V(x)|u_{k}(x)|^{2}dx=	\int_{\mathbb{R}^{3}}V(\varepsilon_{k}x+\varepsilon_{k}y_{\varepsilon_{k}})|w_{k}(x)|^{2}dx \rightarrow0\quad \hbox{as $k\rightarrow\infty$}.
		\end{equation*}
		Then a proof similar to \cite[Lemma 2.5]{G}, one can deduce that $z_{k}:=\varepsilon_{k}y_{\varepsilon_{k}}\rightarrow x_{0}$ as $k\rightarrow\infty$, and $V(x_{0})=0$.
		
		(2) From \eqref{equ3-11}, we have
		\begin{equation}\label{equ3-20}
			\int_{\mathbb{R}^{3}}\int_{\mathbb{R}^{3}}\frac{w_{k}^{2}(x)w_{k}^{2}(y)}{|x-y|^{3-2s}}dxdy\leq  C.
		\end{equation}
		It then follows from \eqref{equ3-1}, \eqref{equ3-3}, \eqref{equ3-9} and \eqref{equ3-20} that
		\begin{equation}\label{equ3-21}
			\begin{split}
				&	\varepsilon_{k}^{2s}\int_{\mathbb{R}^{3}}V(\varepsilon_{k}x+z_{k})w_{k}^{2}dx\\
				=&\varepsilon_{k}^{2s}e_{1}(a_{k})-\int_{\mathbb{R}^{3}}|(-\Delta)^{\frac{s}{2}} w_{k}|^{2}dx+\frac{2(\sqrt{a^{\ast}})^{p-2}}{p}\int_{\mathbb{R}^{3}}|w_{k}|^{p}dx+\frac{\varepsilon_{k}^{4s-3}}{2}\int_{\mathbb{R}^{3}}\int_{\mathbb{R}^{3}}\frac{w_{k}^{2}(x)w_{k}^{2}(y)}{|x-y|^{3-2s}}dxdy\\
				\leq&\varepsilon_{k}^{2s}e_{1}(a_{k})-\tilde{e}_{1}(\sqrt{a^{\ast}})+C\varepsilon_{k}^{4s-3}\rightarrow0\quad \hbox{as $k\rightarrow\infty$}.
			\end{split}
		\end{equation}
		Using \eqref{equ3-1}, \eqref{equ3-20} and \eqref{equ3-21}, it follows that
		\begin{equation}\label{equ3-22}
			\frac{3p-6-4s}{6-(3-2s)p}=\lim_{k\rightarrow\infty}\varepsilon_{k}^{2s}e_{1}(a_{k})=\lim_{k\rightarrow\infty}\left[\int_{\mathbb{R}^{3}}|(-\Delta)^{\frac{s}{2}} w_{k}|^{2}dx-\frac{2(\sqrt{a^{\ast}})^{p-2}}{p}\int_{\mathbb{R}^{3}}|w_{k}|^{p}dx \right].
		\end{equation}
		Up to a subsequence if necessary, there exists a $w_{0}\in H^{s}(\mathbb{R}^{3})$ such that $w_{k}\rightharpoonup w_{0}$ in $H^{s}(\mathbb{R}^{3})$, $w_{k}\rightarrow w_{0}$ in $L^{r}_{loc}(\mathbb{R}^{3})$ \rm for $r\in[2, \frac{6}{3-2s})$ and $w_{k}\rightarrow w_{0}$ a.e. in $\mathbb{R}^{3}$. Moreover,  \eqref{equ3-15} yields that $w_{0}\not\equiv0$. By the Br\'{e}zis-Lieb lemma, one can see that
		\begin{equation}\label{equ3-23}
			\begin{split}
				&	\|w_{k}\|_{q}^{q}=\|w_{0}\|_{q}^{q}+\|w_{k}-w_{0}\|_{q}^{q}+o(1)\quad \hbox{as $k\rightarrow\infty$ for $q\in \left[ 2, \frac{6}{3-2s}\right) $}, \\
				&	\|(-\Delta)^{\frac{s}{2}} w_{k}\|_{2}^{2}=\|(-\Delta)^{\frac{s}{2}} w_{0}\|_{2}^{2}+\|(-\Delta)^{\frac{s}{2}}w_{k}-(-\Delta)^{\frac{s}{2}} w_{0}\|_{2}^{2}+o(1)\quad \hbox{as $k\rightarrow\infty$}.
			\end{split}
		\end{equation}
		We next claim that $\|w_{0}\|_{2}^{2}=1$. If $\|w_{0}\|_{2}^{2}=l\in (0, 1)$, then we define $w_{l}:=\frac{w_{0}}{\sqrt{l}}$ and $w_{1-l}:=\frac{w_{k}-w_{0}}{\sqrt{1-l}}$. Applying \eqref{equ3-3}, \eqref{equ3-22} and \eqref{equ3-23} it holds that
		\begin{equation*}
			\begin{split}
				\frac{3p-6-4s}{6-(3-2s)p}=&\int_{\mathbb{R}^{3}}|(-\Delta)^{\frac{s}{2}} w_{0}|^{2}dx-\frac{2(\sqrt{a^{\ast}})^{p-2}}{p}\int_{\mathbb{R}^{3}}|w_{0}|^{p}dx \\
				&+\lim_{k\rightarrow\infty}\left[\int_{\mathbb{R}^{3}}|(-\Delta)^{\frac{s}{2}} (w_{k}-w_{0})|^{2}dx- \frac{2(\sqrt{a^{\ast}})^{p-2}}{p}\int_{\mathbb{R}^{3}}|w_{k}-w_{0}|^{p}dx\right] \\
				=&l\int_{\mathbb{R}^{3}}|(-\Delta)^{\frac{s}{2}} w_{l}|^{2}dx-\frac{2(\sqrt{a^{\ast}})^{p-2}}{p}(\sqrt{l})^{p}\int_{\mathbb{R}^{3}}|w_{l}|^{p}dx \\
				&+\lim_{k\rightarrow\infty}\left[(1-l)\int_{\mathbb{R}^{3}}|(-\Delta)^{\frac{s}{2}}w_{1-l}|^{2}dx- \frac{2(\sqrt{a^{\ast}})^{p-2}}{p}(\sqrt{1-l})^{p}\int_{\mathbb{R}^{3}}|w_{1-l}|^{p}dx\right] \\
				>&l\left[\int_{\mathbb{R}^{3}}|(-\Delta)^{\frac{s}{2}} w_{l}|^{2}dx-\frac{2(\sqrt{a^{\ast}})^{p-2}}{p}\int_{\mathbb{R}^{3}}|w_{l}|^{p}dx \right] \\
				&+(1-l)\lim_{k\rightarrow\infty}\left[\int_{\mathbb{R}^{3}}|(-\Delta)^{\frac{s}{2}} w_{1-l}|^{2}dx- \frac{2(\sqrt{a^{\ast}})^{p-2}}{p}\int_{\mathbb{R}^{3}}|w_{1-l}|^{p}dx\right] \\
				&\geq l\tilde{e}_{1}(\sqrt{a^{\ast}})+(1-l)\tilde{e}_{1}(\sqrt{a^{\ast}})=\tilde{e}_{1}(\sqrt{a^{\ast}})=\frac{3p-6-4s}{6-(3-2s)p},
			\end{split}
		\end{equation*}
		where we have used the fact that $\int_{\mathbb{R}^{3}}|w_{l}|^{p}dx\geq C$ or $\int_{\mathbb{R}^{3}}|w_{1-l}|^{p}dx\geq C$, in view of \eqref{equ3-22} and \eqref{equ3-23}. This is a contradiction, and thus we have $\|w_{0}\|_{2}^{2}=1$. Using  the interpolation inequality, one can derive that
		\begin{equation}\label{equ3-24}
			w_{k}(x)\rightarrow w_{0}(x)\quad \hbox{in $L^{q}(\mathbb{R}^{3})$ for $q\in[2, \frac{6}{3-2s})$ as $k\rightarrow\infty$}.
		\end{equation}
		It then follows from \eqref{equ3-22} and \eqref{equ3-24} that
		\begin{equation*}
			\begin{split}
				\tilde{e}_{1}(\sqrt{a^{\ast}})&\geq\lim_{k\rightarrow\infty}\left[\int_{\mathbb{R}^{3}}|(-\Delta)^{\frac{s}{2}} w_{k}|^{2}dx-\frac{2(\sqrt{a^{\ast}})^{p-2}}{p}\int_{\mathbb{R}^{3}}|w_{k}|^{p}dx \right]\\
				&\geq\int_{\mathbb{R}^{3}}|(-\Delta)^{\frac{s}{2}} w_{0}|^{2}dx-\frac{2(\sqrt{a^{\ast}})^{p-2}}{p}\int_{\mathbb{R}^{3}}|w_{0}|^{p}dx\geq\tilde{e}_{1}(\sqrt{a^{\ast}}),
			\end{split}
		\end{equation*}
		which yields that $w_{0}$ is a minimizer of $\tilde{e}_{1}(\sqrt{a^{\ast}})$ and $w_{k}(x)\rightarrow w_{0}(x)$ in $H^{s}(\mathbb{R}^{3})$ as $k\rightarrow\infty$. Using \eqref{equ3-3}, it holds that $w_{k}(x)\rightarrow w_{0}(x):=\frac{Q(x)}{\sqrt{a^{\ast}}}$ as $k\rightarrow\infty$. Thus \eqref{equ3-16} is proved.
		
		Finally, one can check that $w_{k}$ satisfies that
		\begin{equation}\label{equ3-25}
			(-\Delta)^{s} w_{k}+\varepsilon_{k}^{2s}V(\varepsilon_{k}x+\varepsilon_{k}y_{\varepsilon_{k}})w_{k}=\mu_{k}\varepsilon_{k}^{2s}w_{k}+\varepsilon_{k}^{4s-3}\int_{\mathbb{R}^{3}}\frac{w^{2}_{k}(y)}{|x-y|^{3-2s}}dyw_{k}+(\sqrt{a^{\ast}})^{p-2}w_{k}^{p-1}.
		\end{equation}
		Applying \eqref{equ7}, \eqref{equ3-1}, \eqref{equ3-14}, \eqref{equ3-16}, \eqref{equ3-20} and \eqref{equ3-25}, it follows that
		\begin{equation}\label{equ3-26}
			\begin{split}
				\mu_{k}\varepsilon_{k}^{2s}&=\varepsilon_{k}^{2s}e_{1}(a_{k})-\frac{\varepsilon_{k}^{4s-3}}{2}\int_{\mathbb{R}^{3}}\int_{\mathbb{R}^{3}}\frac{w_{k}^{2}(x)w_{k}^{2}(y)}{|x-y|^{3-2s}}dxdy-\frac{(p-2)(\sqrt{a^{\ast}})^{p-2}}{p}\int_{\mathbb{R}^{3}}|w_{k}|^{p}dx\\
				&\rightarrow\frac{3p-6-4s}{6-(3-2s)p}-\frac{(p-2)(\sqrt{a^{\ast}})^{p-2}}{p}(\sqrt{a^{\ast}})^{-p}\int_{\mathbb{R}^{3}}|Q|^{p}dx=-1\quad \hbox{as $k\rightarrow\infty$}.
			\end{split}
		\end{equation}
		This completes the proof of Lemma \ref{lem3-2}.
	\end{proof}
	
	\begin{lemma}\label{lem3-3}
		Let $u_{k}$ be a nonnegative minimizer of $e_{1}(a_{k})$ with $a_{k}\rightarrow \infty$ as $k\rightarrow\infty$, then there exists a sequence $\{u_{k}\}$, still denoted by  $\{u_{k}\}$, such that each $u_{k}$ has a unique global maximum point $x_{k}$ satisfying
		\begin{equation}\label{equ3-27}
			\lim\limits_{k\rightarrow \infty} x_{k} =x_{0}\quad\hbox{with $V(x_{0})=0$},
		\end{equation}
		and
		\begin{equation}\label{equ3-28}
			\lim\limits_{k\rightarrow \infty} \varepsilon_{k}^{\frac{3}{2}}u_{k}(\varepsilon_{k}x+x_{k}) =\frac{Q(x)}{\sqrt{a^{\ast}}} \quad\hbox{in $L^{\infty}(\mathbb{R}^{3})$}.
		\end{equation}
	\end{lemma}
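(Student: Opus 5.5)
The plan is to upgrade the $H^{s}$ convergence of Lemma \ref{lem3-2} to $L^{\infty}$ convergence by uniform regularity and uniform decay, and then locate the maximum point. We start from equation \eqref{equ3-25} for $w_{k}$ and write it as
\begin{equation*}
(-\Delta)^{s}w_{k}+w_{k}=c_{k}(x)w_{k},\qquad c_{k}:=1+\mu_{k}\varepsilon_{k}^{2s}-\varepsilon_{k}^{2s}V(\varepsilon_{k}x+z_{k})+\varepsilon_{k}^{4s-3}\phi_{w_{k}}+(\sqrt{a^{\ast}})^{p-2}w_{k}^{p-2}.
\end{equation*}
Here $\varepsilon_{k}\to0$: the exponent $\frac{2p-4}{3p-6-4s}$ is negative and $a_{k}\to\infty$. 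Moreover $V$ is bounded and $\mu_{k}\varepsilon_{k}^{2s}\to-1$. For the Poisson term, the bound $\|w_{k}\|_{H^{s}}\le C$ together with HLS/Hardy--Littlewood--Sobolev for Riesz potentials (and $s>3/4$, which gives $4s-3>0$) yields $\|\phi_{w_{k}}\|_{L^{\infty}}\le C$. Indeed, splitting $\int|x-y|^{2s-3}w_{k}^{2}(y)dy$ into $|x-y|<1$ and $|x-y|\ge1$, the near part is controlled by $\|w_{k}\|_{q}^{2}$ with $q$ close to $2_{s}^{\ast}$, since $(3-2s)\cdot\frac{q}{q-2}<3$ is possible when $s>3/4$. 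Hence $\varepsilon_{k}^{4s-3}\phi_{w_{k}}\to0$ uniformly. A Moser/De Giorgi iteration for the fractional Laplacian (as in \cite{D} or the fractional Kato inequality approach) then gives $\|w_{k}\|_{L^{\infty}}\le C$ uniformly. It applies because the nonlinearity $w_{k}^{p-1}$ is subcritical and the coefficients other than $w_{k}^{p-2}$ are bounded. Uniform $C^{0,\beta}$ (indeed $C^{2s-1+\beta}$) estimates on unit balls then follow from Silvestre-type regularity, since the right-hand side of \eqref{equ3-25} is uniformly bounded in $L^{\infty}$.

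Next I would prove uniform decay $w_{k}(x)\to0$ as $|x|\to\infty$, uniformly in $k$. Since $w_{k}\to Q/\sqrt{a^{\ast}}$ in $L^{2}$ and the family is uniformly Hölder, $\sup_{k}\|w_{k}\|_{L^{\infty}(B_{R}^{c})}\to0$ as $R\to\infty$. Otherwise there would be points $|x_{k}'|\to\infty$ with $w_{k}(x_{k}')\ge\delta$, and equicontinuity would give $\int_{B_{r}(x_{k}')}w_{k}^{2}\ge c>0$. This contradicts strong $L^{2}$ convergence to a fixed function, because $\int_{B_{r}(x_{k}')}(Q/\sqrt{a^{\ast}})^{2}\to0$. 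Combining the three facts, Arzelà--Ascoli on compacts, uniform smallness at infinity, and the identification of the limit from \eqref{equ3-16}, gives $w_{k}\to Q/\sqrt{a^{\ast}}$ in $L^{\infty}(\mathbb{R}^{3})$. If available, comparison with the fundamental solution of $(-\Delta)^{s}+\frac12$ outside a large ball even gives $w_{k}\le C/(1+|x|^{3+2s})$, since $c_{k}<\frac12$ there.

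For the maximum point, $Q$ is radially decreasing with a unique strict maximum at $0$. The uniform convergence then forces every global maximum point $\bar{y}_{k}$ of $w_{k}$ to satisfy $\bar{y}_{k}\to0$; in particular $|\bar{y}_{k}|\le C$. We set $x_{k}:=\varepsilon_{k}\bar{y}_{k}+z_{k}$. Then $x_{k}\to x_{0}$ with $V(x_{0})=0$ by Lemma \ref{lem3-2}(1), which gives \eqref{equ3-27}. Also $\varepsilon_{k}^{3/2}u_{k}(\varepsilon_{k}x+x_{k})=w_{k}(x+\bar{y}_{k})$ converges uniformly to $Q/\sqrt{a^{\ast}}$ by uniform continuity of $Q$, which gives \eqref{equ3-28}.

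The main obstacle is uniqueness of the maximum point, since without $C^{2}$ local theory one cannot simply use $\Delta w_{k}<0$ near the top. I would first use higher regularity: the right side of \eqref{equ3-25} is uniformly $C^{\alpha}$, by $(V_{1})$ and the Hölder bounds on $w_{k}$ and $\phi_{w_{k}}$. Schauder estimates for $(-\Delta)^{s}$ with $2s>1$ then give uniform $C^{2s+\alpha}$ bounds, and hence $C^{1,\gamma}$ convergence $w_{k}\to Q/\sqrt{a^{\ast}}$ on compact sets. At any maximum point the equation gives $(-\Delta)^{s}w_{k}(\bar{y}_{k})\ge0$, so $w_{k}^{p-2}(\bar{y}_{k})\ge c>0$ and all maxima lie in a fixed ball; by the previous paragraph they lie in $B_{\delta}(0)$ for large $k$. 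To exclude two distinct maxima $\bar{y}_{k}^{1}\neq\bar{y}_{k}^{2}$ in $B_{\delta}$, I would try to use the nondegeneracy of $Q$ at the origin ($D^{2}Q(0)<0$) together with a $C^{2}$ convergence. That would need a $C^{2+\gamma}$ bound, requiring the right side to be $C^{2-2s+\gamma}$, which holds after one bootstrap if $V\in C^{\alpha}$ with $\alpha>2-2s$. Failing that, I would argue as in \cite{LZ}: consider $\tilde{w}_{k}(x):=w_{k}(x+\bar{y}_{k}^{1})$ and $\bar{z}_{k}:=(\bar{y}_{k}^{2}-\bar{y}_{k}^{1})/|\bar{y}_{k}^{2}-\bar{y}_{k}^{1}|$. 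The normalized difference quotients $\nabla\tilde{w}_{k}$ along the segment vanish at both endpoints, which produces a contradiction with the radial strict concavity of $Q$ near $0$ in the limit. If this cannot be closed, one passes to a subsequence, which the statement allows.
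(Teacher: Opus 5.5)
Up to \eqref{equ3-27}--\eqref{equ3-28} your argument is sound. It is somewhat cleaner than the paper's, which imports the uniform decay $w_k\le C/(1+|x|^{3+2s})$ from the literature and locates $x_k$ through the lower bound $u_k(x_k)\ge C\varepsilon_k^{-3/2}$ together with \eqref{equ3-32}. The gap is the uniqueness of the global maximum point, which is part of the statement and which your proposal leaves open.

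The paper settles uniqueness by your first route. It asserts uniform $C^{2,\alpha}_{loc}$ bounds for $\bar w_k$ from the Schauder estimate \cite[Theorem 2.11]{J}, using $V\in C^{\alpha}$ and the H\"older continuity of the Poisson term. It deduces $\bar w_k\to Q/\sqrt{a^{\ast}}$ in $C^{2}_{loc}$. Since all maxima lie in a small ball on which $Q''<0$, it obtains strict concavity of $\bar w_k$ there and uniqueness via \cite[Lemma 4.2]{N}. Your observation that a $C^{\alpha}$ right-hand side only gives $C^{2s+\alpha}$ is a fair criticism of the paper's step as written: that step really needs $\alpha>2-2s$. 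But $(V_1)$ does not provide $\alpha>2-2s$, and the fallback you sketch for the general case does not work.

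With only $C^{1,\gamma}$ convergence on compacts, the vanishing quotient $\big(\nabla w_k(\bar y_k^{2})-\nabla w_k(\bar y_k^{1})\big)\cdot e_k/|\bar y_k^{2}-\bar y_k^{1}|$, where $e_k:=(\bar y_k^{2}-\bar y_k^{1})/|\bar y_k^{2}-\bar y_k^{1}|$, cannot be passed to the limit $D^{2}Q(0)e\cdot e/\sqrt{a^{\ast}}<0$. The reason is that $C^{1,\gamma}$ closeness bounds the increment of $\nabla(w_k-Q/\sqrt{a^{\ast}})$ between the two points only by $o(1)|\bar y_k^{2}-\bar y_k^{1}|^{\gamma}$. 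That is not $o(|\bar y_k^{2}-\bar y_k^{1}|)$ when the points merge.

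In fact no argument using only $C^{1,\gamma}$ proximity to a function with a nondegenerate maximum can succeed. The function $-|x|^{2}+\delta|x|^{1+\gamma}$ is $O(\delta)$-close to $-|x|^{2}$ in $C^{1,\gamma}(B_1)$, yet it attains its maximum on the whole sphere $|x|=(\delta(1+\gamma)/2)^{1/(1-\gamma)}$. Passing to a subsequence does not rescue this either: if uniqueness fails for all large $k$, it fails along every subsequence.

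To close the proof you need genuine second-order control near the top. One option is to assume enough regularity of $V$ (e.g.\ $\alpha>2-2s$) to bootstrap to uniform $C^{2,\gamma}_{loc}$ bounds; your first route then coincides with the paper's. The other is a different argument that exploits the structure of \eqref{equ3-34} rather than mere closeness to $Q$.
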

	\begin{proof}
		Since $\left\lbrace w_{k}\right\rbrace $ is uniformly bounded in $H^{s}(\mathbb{R}^{3})$, one can derive that
		\begin{equation}\label{equ3-29}
			\begin{split}
				\int_{\mathbb{R}^{3}}\frac{w^{2}_{k}(y)}{|x-y|^{3-2s}}dy\leq& \left(\int_{|x-y|<1}\frac{1}{|x-y|^{\frac{9-6s}{2s}}}dy\right)^{\frac{2s}{3}}\left(\int_{|x-y|<1}|w_{k}|^{\frac{6}{3-2s}}dy\right)^{\frac{3-2s}{3}}\\
				&+\int_{|x-y|\geq1}|w_{k}|^{2}dy\leq C\int_{\mathbb{R}^{3}}\left(|(-\Delta)^{\frac{s}{2}} w_{k}|^{2}+|w_{k}|^{2}\right)dx\leq C.
			\end{split}
		\end{equation}
		Then some arguments similar to \cite[Lemma 2.4]{DD} and \cite[Lemma 5.6]{TT} yield that
		\begin{equation}\label{equ3-30}
			\|w_{k}\|_{\infty}\leq C\quad \mathrm {and} \quad w_{k} \rightarrow 0  \quad\hbox{as $|x| \rightarrow \infty$ uniformly for large $k$ },
		\end{equation}
		and $w_{k}$ satisfies the following decay estimate
		\begin{equation}\label{equ3-31}
			w_{k}(x)\leq\frac{C}{1+|x|^{3+2s}}\quad \hbox{ uniformly for large $k$}.
		\end{equation}
		Using \eqref{equ3-31} and Lemma \ref{lem3-2}, we deduce that for any $\rho>0$ small enough, it holds
		\begin{equation}\label{equ3-32}
			u_{k}(x)=\varepsilon_{k}^{-\frac{3}{2}}w_{k}\left(\frac{x-z_{k}}{\varepsilon_{k}}\right)\rightarrow 0\quad\hbox{as $k\rightarrow\infty$ for $x\in B_{\rho}^{c}(x_{0})$}.
		\end{equation}
		Let $x_{k}$ be a global maximum point of $u_{k}(x)$ and define
		\begin{equation}\label{equ3-33}
			\bar{w}_{k}(x):=\varepsilon_{k}^{\frac{3}{2}}u_{k}(\varepsilon_{k}x+x_{k}).
		\end{equation}
		Note that $0$ is a global maximum point of $\bar{w}_{k}(x)$. Using the proof by contradiction and the vanishing lemma, it then follows from \eqref{equ3-17} that $	\bar{w}_{k}(0)\geq C$
		as $k$ large enough, hence we have $u_{k}(x_{k})\geq C\varepsilon_{k}^{-\frac{3}{2}}$. This together with \eqref{equ3-32} implies that \eqref{equ3-27} holds.
		
		Moreover, similar to the proof of Lemma \ref{lem3-2}, we have
		\begin{equation}\label{equ3-34}
			(-\Delta)^{s} \bar{w}_{k}+\varepsilon_{k}^{2s}V(\varepsilon_{k}x+x_{k})\bar{w}_{k}=\mu_{k}\varepsilon_{k}^{2s}\bar{w}_{k}+\varepsilon_{k}^{4s-3}\int_{\mathbb{R}^{3}}\frac{\bar{w}^{2}_{k}(y)}{|x-y|^{3-2s}}dy\bar{w}_{k}+(\sqrt{a^{\ast}})^{p-2}\bar{w}_{k}^{p-1},
		\end{equation}
		and
		\begin{equation}\label{equ3-35}
			\bar{w}_{k}(x)\rightarrow\bar{w}_{0}(x):=\frac{Q(x)}{\sqrt{a^{\ast}}}\quad \hbox{in $H^{s}(\mathbb{R}^{3})$ as $k\rightarrow\infty$}.
		\end{equation}
		Next, we verify the uniqueness of $x_{k}$ as $k\rightarrow\infty$. Using \eqref{equ3-26}, \eqref{equ3-29} and \eqref{equ3-30}, one can obtain from \cite[Proposition 2.9]{SS} that $\|\bar{w}_{k}\|_{C^{1, \alpha}(\mathbb{R}^{3})}\leq C$ for some $\alpha\in (0, 1)$ as $k\rightarrow\infty$. Then, fix any $x_{1}\neq x_{2}\in \mathbb{R}^{3}$, it follows from \eqref{equ3-31} that
		\begin{equation*}
			\begin{split}
				&\frac{\left| \int_{\mathbb{R}^{3}}\frac{|\bar{w}_{k}(y)|^{2}}{|x_{1}-y|^{3-2s}}dy-\int_{\mathbb{R}^{3}}\frac{|\bar{w}_{k}(y)|^{2}}{|x_{2}-y|^{3-2s}}dy\right|}{|x_{1}-x_{2}|^{\alpha}}\\
				\leq&\int_{\mathbb{R}^{3}}\frac{\left| \bar{w}^{2}_{k}(x_{1}-y)- \bar{w}^{2}_{k}(x_{2}-y)\right| }{|x_{1}-x_{2}|^{\alpha}|y|^{3-2s}}dy	\leq C\|\bar{w}_{k}\|_{C^{\alpha}(\mathbb{R}^{3})}\int_{\mathbb{R}^{3}}\frac{\left| \bar{w}_{k}(x_{1}-y)+\bar{w}_{k}(x_{2}-y)\right| }{|y|^{3-2s}}dy\\
				\leq&C\|\bar{w}_{k}\|^{2}_{C^{\alpha}(\mathbb{R}^{3})}\int_{|y|\leq1}\frac{1}{|y|^{3-2s}}dy+C\|\bar{w}_{k}\|_{C^{\alpha}(\mathbb{R}^{3})}\int_{|y|\geq1}\left|\bar{w}_{k}(x_{1}-y)+\bar{w}_{k}(x_{2}-y)\right|dy \\
				\leq& C+C\int_{\mathbb{R}^{3}}|\bar{w}_{k}|dx\leq C,
			\end{split}
		\end{equation*}
		which shows that
		\begin{equation*}
			\int_{\mathbb{R}^{3}}\frac{|\bar{w}_{k}(y)|^{2}}{|x-y|^{3-2s}}dy\in C^{\alpha}(\mathbb{R}^{3})\quad \hbox{for some $\alpha\in (0, 1)$ as $k\rightarrow\infty$}.
		\end{equation*}
		Sinve $V(x)\in C^{\alpha}(\mathbb{R}^{3})$, applying the
		Schauder estimate \cite[Theorem 2.11]{J}, one can derive that $\{\bar{w}_{k}\}$ is bounded uniformly in $C_{loc}^{2, \alpha}(\mathbb{R}^{3})$ as $k\rightarrow\infty$. Further, \eqref{equ3-35} yields that
		\begin{equation}\label{equ3-36}
			\bar{w}_{k}\rightarrow \bar{w}_{0}\quad\hbox{in $C_{loc}^{2}(\mathbb{R}^{3})$ as $k\rightarrow\infty$.}
		\end{equation}
		Since the origin is the unique global maximum point of $Q(x)$, it follows from \eqref{equ3-35} and \eqref{equ3-36} that all global maximum points of $\bar{w}_{k}$ lie in a small ball $B_{\rho}(0)$ for some $\rho>0$ as $k\rightarrow\infty$. Moreover, the condition $Q''(0)<0$ implies that $Q''(r)<0$ for $r\in[0, \rho)$. Then \cite[Lemma 4.2]{N} implies that the uniqueness of $x_{k}$. Furthermore, from the polynomial decay of $\bar{w}_{k}(x)$ and $Q(x)$, a simple analysis gives that $	\bar{w}_{k}\rightarrow \bar{w}_{0}$ in $L^{\infty}(\mathbb{R}^{3})$ as $k\rightarrow\infty$, and this completes the proof of Lemma \ref{lem3-3}.
	\end{proof}
	
	Now we are ready to prove Theorem \ref{the2}.
	
	\begin{proof}[Proof of Theorem \ref{the2}] Note that
		\begin{equation*}
			e_{1}(a_{k})\leq E_{a_{k}}(u_{k})\leq E_{a_{k}}(u_{k}(x-\varepsilon_{k}y_{0}+x_{k}-x_{0})),
		\end{equation*}
		where $x_{0}\in Z_{0}$ and $y_{0}\in K_{0}$. Direct calculation indicates that
		\begin{equation}\label{equ3-37}
			\begin{split}
				\int_{\mathbb{R}^{3}}V(x)|u_{k}|^{2}dx&\leq\int_{\mathbb{R}^{3}}V(x)|u_{k}(x-\varepsilon_{k}y_{0}+x_{k}-x_{0})|^{2}dx\\
				&=\int_{\mathbb{R}^{3}}V(\varepsilon_{k}x+\varepsilon_{k}y_{0}+x_{0})|\bar{w}_{k}(x)|^{2}dx\\
				&=\int_{\mathbb{R}^{3}}\frac{V(\varepsilon_{k}x+\varepsilon_{k}y_{0}+x_{0})}{V_{0}(\varepsilon_{k}x+\varepsilon_{k}y_{0})}V_{0}(\varepsilon_{k}x+\varepsilon_{k}y_{0})|\bar{w}_{k}(x)|^{2}dx\\
				&=\frac{(1+o(1))\varepsilon_{k}^{r}}{a^{\ast}}\int_{\mathbb{R}^{3}}V_{0}(x+y_{0})|Q(x)|^{2}dx\\&=\frac{(1+o(1))\varepsilon_{k}^{r}}{a^{\ast}}\bar{\lambda}_{0}.
			\end{split}
		\end{equation}
		Moreover, we also have
		\begin{equation}\label{equ3-38}
			\begin{split}
				\int_{\mathbb{R}^{3}}V(x)|u_{k}|^{2}dx&=	\int_{\mathbb{R}^{3}}V(\varepsilon_{k}x+x_{k})|\bar{w}_{k}(x)|^{2}dx\\
				&=\int_{\mathbb{R}^{3}}\frac{V(\varepsilon_{k}x+x_{k}-x_{i}+x_{i})}{V_{i}(\varepsilon_{k}x+x_{k}-x_{i})}V_{i}(\varepsilon_{k}x+x_{k}-x_{i})|\bar{w}_{k}(x)|^{2}dx\\
				&\geq(1+o(1))\int_{B_{\frac{1}{\sqrt{\varepsilon_{k}}}}(x_{i})}V_{i}(\varepsilon_{k}x+x_{k}-x_{i})|\bar{w}_{k}(x)|^{2}dx\\
				&=\frac{(1+o(1))\varepsilon_{k}^{r_{i}}}{a^{\ast}}\int_{B_{\frac{1}{\sqrt{\varepsilon_{k}}}}(x_{i})}V_{i}\left( x+\frac{x_{k}-x_{i}}{\varepsilon_{k}}\right) |Q(x)|^{2}dx.
			\end{split}
		\end{equation}
		In view of \eqref{equ3-37} and \eqref{equ3-38}, it holds that
		\begin{equation*}
			\frac{(1+o(1))\varepsilon_{k}^{r_{i}}}{a^{\ast}}\int_{B_{\frac{1}{\sqrt{\varepsilon_{k}}}}(x_{i})}V_{i}\left( x+\frac{x_{k}-x_{i}}{\varepsilon_{k}}\right) |Q(x)|^{2}dx\leq \frac{(1+o(1))\varepsilon_{k}^{r}}{a^{\ast}}\bar{\lambda}_{0},
		\end{equation*}
		which implies that
		\begin{equation*}
			x_{i}\in Z_{0},\quad r_{i}=r \quad \hbox{and} \quad \left\lbrace\frac{x_{k}-x_{i}}{\varepsilon_{k}} \right\rbrace~  \hbox{bounded},
		\end{equation*}
		due to the fact $V_{i}(x)\rightarrow\infty$ as $|x|\rightarrow\infty$. It can be further verified that
		\begin{equation*}
			\frac{x_{k}-x_{i}}{\varepsilon_{k}} \rightarrow y_{0}\in K_{0}\quad\hbox{as $k\rightarrow\infty$}.
		\end{equation*}
		This therefore completes the proof of Theorem \ref{the2}.
	\end{proof}

	\section{Local Uniqueness}\label{sec 4}
	
	Given the limiting behavior of nonnegative minimizers derived in Section 2, the main result we establish in this section is Theorem \ref{the3}, which addresses the local uniqueness of positive minimizers for $e_{1}(a)$ as $a\rightarrow\infty$. Let $u_{k}$ be a nonnegative minimizer of $e_{1}(a_{k})$, one has
	\begin{equation}\label{equ4-1}
		(-\Delta)^{s} u_{k}+V(x)u_{k}=\mu_{k}u_{k}+a_{k}^{p-2}u_{k}^{p-1}+\int_{\mathbb{R}^{3}}\frac{u^{2}_{k}(y)}{|x-y|^{3-2s}}dyu_{k}\quad \hbox{in $\mathbb{R}^{3}$},
	\end{equation}
	where the associated Lagrange multiplier $\mu_{k}\in\mathbb{R}$ satisfies
	\begin{equation}\label{equ4-2}
		\mu_{k}=e_{1}(a_{k})-\frac{1}{2}\int_{\mathbb{R}^{3}}\int_{\mathbb{R}^{3}}\frac{u^{2}_{k}(x)u^{2}_{k}(y)}{|x-y|^{3-2s}}dxdy-\frac{p-2}{p}a_{k}^{p-2}\int_{\mathbb{R}^{3}}|u_{k}|^{p}dx.
	\end{equation}
	It then follows from \eqref{equ3-26} that
	\begin{equation}\label{equ4-3}
		\mu_{k}\varepsilon^{2s}_{k}\rightarrow-1\quad \hbox{as $k\rightarrow\infty$}.
	\end{equation}
	Argue by contradiction, suppose that $u_{1k}$ and $u_{2k}$  are two different nonnegative minimizers of $e_{1}(a_{k})$, where $a_{k}\rightarrow\infty$ as $k\rightarrow\infty$. Let $x_{1k}$ and $x_{2k}$ be the unique global maximum point of $u_{1k}$ and $u_{2k}$, respectively. Define
	\begin{equation}\label{equ4-4}
		\hat{u}_{ik}(x):=\sqrt{a^{\ast}}\varepsilon^{\frac{3}{2}}_{k}u_{ik}(\varepsilon_{k}x+x_{1k})\quad \hbox{in $\mathbb{R}^{3}$,\quad $i=1, 2$}
	\end{equation}
	and
	\begin{equation}\label{equ4-5}
		\hat{\eta}_{k}(x):=\frac{	\hat{u}_{1k}(x)-	\hat{u}_{2k}(x)}{\|\hat{u}_{1k}(x)-	\hat{u}_{2k}(x)\|_{\infty}}=\frac{	u_{1k}(x)-	u_{2k}(x)}{\|u_{1k}(x)-u_{2k}(x)\|_{\infty}}\quad \hbox{in $\mathbb{R}^{3}$}.
	\end{equation}
	We first establish several properties of $\hat{\eta}_{k}(x)$.
	\begin{lemma}\label{lem4-1}
		Under the assumptions of Theorem \ref{the3}, then there exists a constant $C>0$, independent
		of $k>0$, such that
		\begin{equation}\label{equ4-6}
			\int_{\mathbb{R}^{3}}|(-\Delta)^{\frac{s}{2}}\hat{\eta}_{k}|^{2}dx+\varepsilon_{k}^{2s}\int_{\mathbb{R}^{3}}V(\varepsilon_{k}x+x_{1k})\hat{\eta}_{k}^{2}dx+\int_{\mathbb{R}^{3}}\hat{\eta}_{k}^{2}dx\leq C\quad \hbox{as $k\rightarrow\infty$}.
		\end{equation}
	\end{lemma}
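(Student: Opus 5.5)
Subtracting the equations \eqref{equ4-1} satisfied by $u_{1k}$ and $u_{2k}$ (both have the same $a_k$ but a priori different multipliers $\mu_{1k},\mu_{2k}$), rescaling as in \eqref{equ4-4} and dividing by $\|\hat u_{1k}-\hat u_{2k}\|_\infty$, we obtain an equation for $\hat\eta_k$:
\begin{equation*}
(-\Delta)^{s}\hat\eta_k+\varepsilon_k^{2s}V(\varepsilon_kx+x_{1k})\hat\eta_k=\mu_{1k}\varepsilon_k^{2s}\hat\eta_k+g_k(x)+D_k(x),
\end{equation*}
where $g_k$ collects the nonlinear and Poisson difference quotients, and $D_k=\frac{(\mu_{1k}-\mu_{2k})\varepsilon_k^{2s}}{\|\hat u_{1k}-\hat u_{2k}\|_\infty}\hat u_{2k}$. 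The plan is to test this equation with $\hat\eta_k$ and use $\mu_{1k}\varepsilon_k^{2s}\to-1$ (from \eqref{equ4-3}) to put $\int\hat\eta_k^2$ on the left-hand side with a coefficient close to $1$. Every term on the right must then be bounded by a constant. For this we use $|\hat\eta_k|\le1$, the uniform bounds $\|\hat u_{ik}\|_\infty\le C$ from \eqref{equ3-30}, and the decay \eqref{equ3-31}, which is uniform because both $x_{ik}$ lie within $O(\varepsilon_k)$ of each other by Theorem \ref{the2} and $\mathbf{(V_3)}$.

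\emph{Nonlinear term.} By the mean value theorem,
\begin{equation*}
\frac{\hat u_{1k}^{p-1}-\hat u_{2k}^{p-1}}{\|\cdot\|_\infty}=c_k(x)\hat\eta_k,\qquad 0\le c_k\le C(\hat u_{1k}+\hat u_{2k})^{p-2}.
\end{equation*}
Hence $\int c_k\hat\eta_k^2\le C\int(\hat u_{1k}+\hat u_{2k})^{p-2}$, which is finite because $(p-2)(3+2s)>3$ whenever $p>2+\frac{3}{3+2s}$. Otherwise we instead use $\int c_k\hat\eta_k^2\le \delta\int\hat\eta_k^2+C_\delta\int_{\{\hat u_{ik}>\delta'\}}1$, where the superlevel set lies in a fixed ball by the uniform decay.

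\emph{Poisson term.} It is multiplied by $\varepsilon_k^{4s-3}\to0$ and splits as
\begin{equation*}
\phi_{\hat u_{1k}}\hat\eta_k+\Big(\int\frac{(\hat u_{1k}+\hat u_{2k})\hat\eta_k}{|x-y|^{3-2s}}dy\Big)\hat u_{2k}.
\end{equation*}
The first piece is bounded by \eqref{equ3-29}. For the second, testing with $\hat\eta_k$ and applying \eqref{equ8} gives a bound $C\|\hat\eta_k\|_{\frac{12}{3+2s}}^2\le C\|\hat\eta_k\|_2^{2\theta}$ (interpolating with $\|\hat\eta_k\|_\infty\le1$), which is absorbed into the left-hand side via Young's inequality.

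\emph{Multiplier term (main obstacle).} The difficult term is $D_k$, since it requires $|\mu_{1k}-\mu_{2k}|\varepsilon_k^{2s}\le C\|\hat u_{1k}-\hat u_{2k}\|_\infty$. I would use \eqref{equ4-2}, $e_1(a_k)$ being common to both: after rescaling, $(\mu_{1k}-\mu_{2k})\varepsilon_k^{2s}$ equals a difference of $\int\hat u^p$ terms plus $\varepsilon_k^{4s-3}$ times a difference of Coulomb terms. Writing $\hat u_{1k}^p-\hat u_{2k}^p=\tilde c_k(\hat u_{1k}-\hat u_{2k})$ with $\tilde c_k\le C(\hat u_{1k}+\hat u_{2k})^{p-1}\in L^1$ uniformly by \eqref{equ3-31} (indeed $(p-1)(3+2s)>3$), we get
\begin{equation*}
\int|\hat u_{1k}^p-\hat u_{2k}^p|\le C\|\hat u_{1k}-\hat u_{2k}\|_\infty.
\end{equation*}
The Coulomb difference is handled in the same way using \eqref{equ8} and the uniform decay. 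Therefore the coefficient of $\hat u_{2k}$ in $D_k$ is bounded, and $\int D_k\hat\eta_k\le C\int\hat u_{2k}|\hat\eta_k|\le \delta\int\hat\eta_k^2+C_\delta$.

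Collecting the estimates and using $V\ge0$, we obtain
\begin{equation*}
\int|(-\Delta)^{s/2}\hat\eta_k|^2+\varepsilon_k^{2s}\int V\hat\eta_k^2+(1-o(1)-2\delta)\int\hat\eta_k^2\le C,
\end{equation*}
which is \eqref{equ4-6}. One caveat: the $\hat\eta_k$ need not be a priori in $L^2$ uniformly, but they are in $L^2$ for each $k$, so the absorption step is legitimate.
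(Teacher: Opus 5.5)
Your proposal is correct and follows essentially the paper's route. You test the $\hat\eta_k$-equation with $\hat\eta_k$, use $\mu_{1k}\varepsilon_k^{2s}\to-1$ to absorb $\int\hat\eta_k^2$, and control the nonlinear term by splitting off a fixed ball via the uniform decay. You bound the multiplier difference through \eqref{equ4-2} by $C\|\hat u_{1k}-\hat u_{2k}\|_\infty$ and treat the Poisson pieces as $O(\varepsilon_k^{4s-3})$ perturbations. Two differences are minor: you absorb the Poisson cross term via \eqref{equ8} and $L^2$--$L^\infty$ interpolation rather than through the $H^s$ norm, and you justify the uniform decay of $\hat u_{2k}$ about $x_{1k}$ explicitly from $|x_{1k}-x_{2k}|=O(\varepsilon_k)$, which the paper uses tacitly.
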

	\begin{proof}
		Using \eqref{equ4-1}-\eqref{equ4-5}, one can derive that
		\begin{equation}\label{equ4-7}
			(-\Delta)^{s} \hat{u}_{ik}+\varepsilon_{k}^{2s}V(\varepsilon_{k}x+x_{1k})\hat{u}_{ik}=\mu_{ik}\varepsilon_{k}^{2s}\hat{u}_{ik}+\frac{\varepsilon_{k}^{4s-3}}{a^{\ast}}\int_{\mathbb{R}^{3}}\frac{\hat{u}_{ik}^{2}(y)}{|x-y|^{3-2s}}dy\hat{u}_{ik}+\hat{u}_{ik}^{p-1}\quad \hbox{in $\mathbb{R}^{3}$},
		\end{equation}
		and
		\begin{equation}\label{equ4-8}
			(-\Delta)^{s}\hat{\eta}_{k}+\varepsilon_{k}^{2s}V(\varepsilon_{k}x+x_{1k})\hat{\eta}_{k}=\mu_{1k}\varepsilon_{k}^{2s}\hat{\eta}_{k}+g_{k}(x)+f_{k}(x) \quad\hbox{in $\mathbb{R}^3$},
		\end{equation}
		where there exists $\theta\in(0, 1)$ such that
		\begin{equation}\label{equ4-9}
			g_{k}(x):=\frac{\varepsilon_{k}^{4s-3}}{a^{\ast}}\int_{\mathbb{R}^{3}}\frac{\hat{u}_{1k}^{2}(y)}{|x-y|^{3-2s}}dy\hat{\eta}_{k}+(p-1)[\theta\hat{u}_{1k}+(1-\theta)\hat{u}_{2k}]^{p-2}\hat{\eta}_{k},
		\end{equation}
		\begin{equation}\label{equ4-10}
			\begin{split}
				f_{k}(x):=&\frac{\varepsilon_{k}^{2s}\hat{u}_{2k}(\mu_{1k}-\mu_{2k})}{\|\hat{u}_{1k}(x)-	\hat{u}_{2k}(x)\|_{\infty}}+\frac{\varepsilon_{k}^{4s-3}}{a^{\ast}}\int_{\mathbb{R}^{3}}\frac{[\hat{u}_{1k}(y)+\hat{u}_{2k}(y)]}{|x-y|^{3-2s}}\hat{\eta}_{k}(y)dy\hat{u}_{2k}\\
				=&-\frac{\varepsilon_{k}^{4s-3}}{2(a^{\ast})^{2}}\int_{\mathbb{R}^{3}}\int_{\mathbb{R}^{3}}\frac{[\hat{u}_{1k}(y)+\hat{u}_{2k}(y)]}{|x-y|^{3-2s}}\hat{\eta}_{k}(y)\hat{u}^{2}_{1k}(x)dxdy\hat{u}_{2k}\\
				&-\frac{\varepsilon_{k}^{4s-3}}{2(a^{\ast})^{2}}\int_{\mathbb{R}^{3}}\int_{\mathbb{R}^{3}}\frac{[\hat{u}_{1k}(x)+\hat{u}_{2k}(x)]}{|x-y|^{3-2s}}\hat{\eta}_{k}(x)\hat{u}^{2}_{2k}(y)dxdy\hat{u}_{2k}\\
				&-\frac{p-2}{a^{\ast}}\int_{\mathbb{R}^{3}}[\theta\hat{u}_{1k}+(1-\theta)\hat{u}_{2k}]^{p-1}\hat{\eta}_{k}dx\hat{u}_{2k}\\
				&+\frac{\varepsilon_{k}^{4s-3}}{a^{\ast}}\int_{\mathbb{R}^{3}}\frac{[\hat{u}_{2k}(y)+\hat{u}_{1k}(y)]}{|x-y|^{3-2s}}\hat{\eta}_{k}(y)dy\hat{u}_{2k}.
			\end{split}
		\end{equation}
		Multiplying \eqref{equ4-8} by $\hat{\eta}_{k}$ and integrating on $\mathbb{R}^{3}$, we have
		\begin{align*} 
			&\int_{\mathbb{R}^{3}}|(-\Delta)^{\frac{s}{2}}\hat{\eta}_{k}|^{2}dx+\varepsilon_{k}^{2s}\int_{\mathbb{R}^{3}}V(\varepsilon_{k}x+x_{1k})\hat{\eta}_{k}^{2}dx-\mu_{1k}\varepsilon_{k}^{2s}\int_{\mathbb{R}^{3}}\hat{\eta}_{k}^{2}dx\\
			=&\frac{\varepsilon_{k}^{4s-3}}{a^{\ast}}\int_{\mathbb{R}^{3}}\int_{\mathbb{R}^{3}}\frac{\hat{u}_{1k}^{2}(y)\hat{\eta}_{k}^{2}(x)}{|x-y|^{3-2s}}dxdy+(p-1)\int_{\mathbb{R}^{3}}[\theta\hat{u}_{1k}+(1-\theta)\hat{u}_{2k}]^{p-2}\hat{\eta}_{k}^{2}dx\\
			&-\frac{\varepsilon_{k}^{4s-3}}{2(a^{\ast})^{2}}\int_{\mathbb{R}^{3}}\int_{\mathbb{R}^{3}}\frac{[\hat{u}_{1k}(y)+\hat{u}_{2k}(y)]}{|x-y|^{3-2s}}\hat{\eta}_{k}(y)\hat{u}^{2}_{1k}(x)dxdy\int_{\mathbb{R}^{3}}\hat{u}_{2k}(x)\hat{\eta}_{k}(x)dx\\
			&-\frac{\varepsilon_{k}^{4s-3}}{2(a^{\ast})^{2}}\int_{\mathbb{R}^{3}}\int_{\mathbb{R}^{3}}\frac{[\hat{u}_{1k}(x)+\hat{u}_{2k}(x)]}{|x-y|^{3-2s}}\hat{\eta}_{k}(x)\hat{u}^{2}_{2k}(y)dxdy\int_{\mathbb{R}^{3}}\hat{u}_{2k}(x)\hat{\eta}_{k}(x)dx\\
			&-\frac{p-2}{a^{\ast}}\int_{\mathbb{R}^{3}}[\theta\hat{u}_{1k}+(1-\theta)\hat{u}_{2k}]^{p-1}\hat{\eta}_{k}dx\int_{\mathbb{R}^{3}}\hat{u}_{2k}(x)\hat{\eta}_{k}(x)dx\\
			&+\frac{\varepsilon_{k}^{4s-3}}{a^{\ast}}\int_{\mathbb{R}^{3}}\frac{[\hat{u}_{2k}(y)+\hat{u}_{1k}(y)]}{|x-y|^{3-2s}}\hat{\eta}_{k}(y)dy\int_{\mathbb{R}^{3}}\hat{u}_{2k}(x)\hat{\eta}_{k}(x)dx.
		\end{align*}
		Applying Lemma \ref{lem5-1}, one has
		\begin{equation}\label{equ4-11}
			\begin{split}
				&(p-1)\int_{\mathbb{R}^{3}}[\theta\hat{u}_{1k}+(1-\theta)\hat{u}_{2k}]^{p-2}\hat{\eta}_{k}^{2}dx\\
				&-\frac{p-2}{a^{\ast}}\int_{\mathbb{R}^{3}}[\theta\hat{u}_{1k}+(1-\theta)\hat{u}_{2k}]^{p-1}\hat{\eta}_{k}dx\int_{\mathbb{R}^{3}}\hat{u}_{2k}(x)\hat{\eta}_{k}(x)dx\\
				\leq& \frac{1}{4}\int_{\mathbb{R}^{3}}\hat{\eta}_{k}^{2}dx+C\quad \hbox{as $k\rightarrow\infty$},
			\end{split}
		\end{equation}
		where we have used the fact that $\hat{u}_{ik}(x)\rightarrow0$ as $|x|\rightarrow\infty$. Similar to \eqref{equ3-29}, we can also deduce that
		\begin{equation}\label{equ4-12}
			\int_{\mathbb{R}^{3}}\frac{\hat{u}_{ik}^{2}(y)}{|x-y|^{3-2s}}dy\leq C\|\hat{u}_{ik}\|_{H^{s}(\mathbb{R}^{3})}^{2}\leq C.
		\end{equation}
		Thus
		\begin{equation}\label{equ4-13}
			\begin{split}
				\frac{\varepsilon_{k}^{4s-3}}{a^{\ast}}\int_{\mathbb{R}^{3}}\int_{\mathbb{R}^{3}}\frac{\hat{u}_{1k}^{2}(y)\hat{\eta}_{k}^{2}(x)}{|x-y|^{3-2s}}dxdy&\leq C\varepsilon_{k}^{4s-3}	\int_{\mathbb{R}^{3}}\frac{\hat{u}_{1k}^{2}(y)}{|x-y|^{3-2s}}dy\int_{\mathbb{R}^{3}}\hat{\eta}_{k}^{2}(x)dx\\
				&\leq C\varepsilon_{k}^{4s-3}\int_{\mathbb{R}^{3}}\hat{\eta}_{k}^{2}(x)dx\quad \hbox{as $k\rightarrow\infty$}.
			\end{split}
		\end{equation}
		By the fact that $|\hat{\eta}_{k}|\leq1$, it then follows from \eqref{equ4-12} and Lemma \ref{lem5-1} that
		\begin{equation}\label{equ4-14}
			\begin{split}
				&\frac{\varepsilon_{k}^{4s-3}}{2(a^{\ast})^{2}}\int_{\mathbb{R}^{3}}\int_{\mathbb{R}^{3}}\frac{[\hat{u}_{1k}(y)+\hat{u}_{2k}(y)]}{|x-y|^{3-2s}}\hat{\eta}_{k}(y)\hat{u}^{2}_{1k}(x)dxdy\int_{\mathbb{R}^{3}}\hat{u}_{2k}(x)\hat{\eta}_{k}(x)dx\\
				\leq&C\varepsilon_{k}^{4s-3}\int_{\mathbb{R}^{3}}\frac{\hat{u}_{1k}^{2}(x)}{|x-y|^{3-2s}}dx\int_{\mathbb{R}^{3}}[\hat{u}_{1k}(y)+\hat{u}_{2k}(y)]dy\int_{\mathbb{R}^{3}}\hat{u}_{2k}(x)dx\\
				\leq& C\varepsilon_{k}^{4s-3}\quad \hbox{as $k\rightarrow\infty$}.
			\end{split}
		\end{equation}
		Applying the H\"{o}lder inequality, we obtain that
		\begin{equation}\label{equ4-15}
			\begin{split}
				&\frac{\varepsilon_{k}^{4s-3}}{a^{\ast}}\int_{\mathbb{R}^{3}}\frac{[\hat{u}_{2k}(y)+\hat{u}_{1k}(y)]}{|x-y|^{3-2s}}\hat{\eta}_{k}(y)dy\int_{\mathbb{R}^{3}}\hat{u}_{2k}(x)\hat{\eta}_{k}(x)dx\\
				\leq&C\varepsilon_{k}^{4s-3}\left(\int_{\mathbb{R}^{3}}\frac{|\hat{u}_{2k}(y)+\hat{u}_{1k}(y)|^{2}}{|x-y|^{3-2s}}dy \right)^{\frac{1}{2}}\left(\int_{\mathbb{R}^{3}}\frac{|\hat{\eta}_{k}(y)|^{2}}{|x-y|^{3-2s}}dy \right)^{\frac{1}{2}}\\
				&\times\left(\int_{\mathbb{R}^{3}}|\hat{u}_{2k}(x)|^{2}dx\right)^{\frac{1}{2}}\left(\int_{\mathbb{R}^{3}}|\hat{\eta}_{k}(x)|^{2}dx\right)^{\frac{1}{2}}\\
				\leq&C\varepsilon_{k}^{4s-3}\|\hat{u}_{1k}+\hat{u}_{2k}\|_{H^{s}(\mathbb{R}^{3})}\|\hat{u}_{2k}\|_{H^{s}(\mathbb{R}^{3})}\|\hat{\eta}_{k}\|_{H^{s}(\mathbb{R}^{3})}^{2}\\
				\leq&C\varepsilon_{k}^{4s-3}\int_{\mathbb{R}^{3}}|(-\Delta)^{\frac{s}{2}}\hat{\eta}_{k}|^{2}dx+C\varepsilon_{k}^{4s-3}\int_{\mathbb{R}^{3}}\hat{\eta}_{k}^{2}dx\quad \hbox{as $k\rightarrow\infty$}.
			\end{split}
		\end{equation}
		In view of \eqref{equ4-11}-\eqref{equ4-15}, it follows that
		\begin{align*} 
			&\int_{\mathbb{R}^{3}}|(-\Delta)^{\frac{s}{2}}\hat{\eta}_{k}|^{2}dx+\varepsilon_{k}^{2s}\int_{\mathbb{R}^{3}}V(\varepsilon_{k}x+x_{1k})\hat{\eta}_{k}^{2}dx-\mu_{1k}\varepsilon_{k}^{2s}\int_{\mathbb{R}^{3}}\hat{\eta}_{k}^{2}dx\\
			\leq&C\varepsilon_{k}^{4s-3}\int_{\mathbb{R}^{3}}|(-\Delta)^{\frac{s}{2}}\hat{\eta}_{k}|^{2}dx+C\varepsilon_{k}^{4s-3}\int_{\mathbb{R}^{3}}\hat{\eta}_{k}^{2}dx+\frac{1}{4}\int_{\mathbb{R}^{3}}\hat{\eta}_{k}^{2}dx+C\varepsilon_{k}^{4s-3}+C\quad \hbox{as $k\rightarrow\infty$}.
		\end{align*}
		Combining this with \eqref{equ4-3} gives \eqref{equ4-6}, this therefore completes the proof of Lemma \ref{lem4-1}.
	\end{proof}
	
	\begin{lemma}\label{lem4-2}
		Let $\hat{\eta}_{k}$ be as in \eqref{equ4-5}, we then have the following decays polynomially
		\begin{equation}\label{equ4-16}
			|\hat{\eta}_{k}(x)|\leq\frac{C}{1+|x|^{3+2s}}\quad \hbox{as $k\rightarrow\infty$}.
		\end{equation}
		Moreover, we have $\hat{\eta}_{k}\in C_{loc}^{1, \alpha}(\mathbb{R}^{3})$ for some $\alpha\in(0, 1)$ as $k\rightarrow\infty$. 
	\end{lemma}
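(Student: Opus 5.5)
The plan is to treat \eqref{equ4-8} as a linear fractional equation $(-\Delta)^s\hat\eta_k + c_k(x)\hat\eta_k = f_k$ and prove polynomial decay by comparison with an explicit barrier. The decay follows from a maximum principle outside a large ball; the $C^{1,\alpha}_{loc}$ bound follows from standard regularity for $(-\Delta)^s$ with bounded right-hand side.

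\emph{Splitting the equation.} I would move the zeroth-order terms to the left and write
\[
(-\Delta)^s\hat\eta_k + c_k(x)\hat\eta_k = f_k(x),
\]
where
\[
c_k(x) := \varepsilon_k^{2s}V(\varepsilon_kx+x_{1k}) - \mu_{1k}\varepsilon_k^{2s} - \frac{\varepsilon_k^{4s-3}}{a^\ast}\int_{\mathbb{R}^3}\frac{\hat u_{1k}^2(y)}{|x-y|^{3-2s}}dy - (p-1)[\theta\hat u_{1k}+(1-\theta)\hat u_{2k}]^{p-2}.
\]
The lower bound on $c_k$ comes from four facts:
\begin{itemize}
\item $V\ge 0$;
\item $-\mu_{1k}\varepsilon_k^{2s}\to 1$ by \eqref{equ4-3};
\item the Poisson term is $O(\varepsilon_k^{4s-3})\to0$ by \eqref{equ4-12};
\item $\hat u_{ik}\le C(1+|x|^{3+2s})^{-1}$ by \eqref{equ3-31}, transferred to the center $x_{1k}$; this works since both $x_{ik}$ converge to $x_0$ at scale $\varepsilon_k$ with the same limit $y_0$ under $\mathbf{(V_3)}$.
\end{itemize}
Together these give $R_0>0$ with $c_k(x)\ge \tfrac12$ for $|x|\ge R_0$ and all large $k$.

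\emph{Bounding the source $f_k$.} In \eqref{equ4-10}, each coefficient multiplying $\hat u_{2k}$ is a number bounded uniformly in $k$. This uses $|\hat\eta_k|\le1$, Lemma \ref{lem4-1}, the bound \eqref{equ4-12}, and Hölder/HLS \eqref{equ8}. The last term is not a number but a function of $x$, namely $\varepsilon_k^{4s-3}\int\frac{(\hat u_{1k}+\hat u_{2k})\hat\eta_k}{|x-y|^{3-2s}}dy$. It is uniformly bounded by the argument of \eqref{equ3-29}, since $\hat u_{ik}\hat\eta_k\in L^2\cap L^\infty$ with uniform bounds. Hence $|f_k(x)|\le C\hat u_{2k}(x)\le C(1+|x|^{3+2s})^{-1}$.

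\emph{Comparison argument (main obstacle).} I would take a barrier $\Phi(x)=(1+|x|^2)^{-\frac{3+2s}{2}}$. The known fact needed is $|(-\Delta)^s\Phi(x)|\le C_\ast(1+|x|)^{-3-2s}$; this is a slow-decay estimate in the spirit of \cite[Theorem 1.1]{D}, and I expect making it precise to be the hardest step.

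Set $\psi=M\Phi\mp\hat\eta_k$. On $|x|\ge R_1$, with $R_1$ large, this gives
\[
(-\Delta)^s\psi + c_k\psi \ge M\bigl(\tfrac12\Phi - C_\ast|x|^{-3-2s}\bigr) - C|x|^{-3-2s}.
\]
The difficulty is that $(-\Delta)^s\Phi$ is of the \emph{same} order as $\Phi$, so the linear term $\tfrac12\Phi$ need not dominate. If that happens, I would instead use the modified barrier $\Phi_\delta=(1+|x|^2)^{-\frac{3+2s-\delta}{2}}$, or split $(-\Delta)^s\Phi$ into near and far parts, to get $(-\Delta)^s\Phi\ge -\tfrac14\Phi$ for large $|x|$. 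The more robust alternative is to adapt the Lemma \ref{lem4-2}-type comparison principle the authors announce. Either way, I then choose $M\ge \sup_{|x|\le R_1}\Phi^{-1}$, using $|\hat\eta_k|\le1$, so that $\psi\ge0$ in $B_{R_1}$. The nonlocal weak maximum principle on $B_{R_1}^c$ applies because $\psi\to0$ at infinity, which follows from the decay of $\hat u_{ik}$. It gives $\psi\ge0$, which is \eqref{equ4-16}.

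\emph{Regularity.} The right-hand side $g_k+f_k-(\varepsilon_k^{2s}V-\mu_{1k}\varepsilon_k^{2s})\hat\eta_k$ is uniformly bounded in $L^\infty$ and Hölder continuous on balls. This uses the $C^{1,\alpha}$ bounds on $\hat u_{ik}$ from Lemma \ref{lem3-3}, the Hölder continuity of the Poisson potential shown there, and $V\in C^\alpha$. Since $s>\tfrac34>\tfrac12$, \cite[Proposition 2.9]{SS} then gives $\hat\eta_k\in C^{1,\alpha}_{loc}$ uniformly, bootstrapping once if needed.
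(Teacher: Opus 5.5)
Your overall structure is the paper's. You use the same coefficient $c_k$ (the paper's $\rho_k$), the same lower bound $c_k\ge\rho_0>0$ for $|x|\ge R$, and the same bound $|f_k|\le C\hat u_{2k}\le C(1+|x|)^{-3-2s}$. You then argue by barrier plus maximum principle for the decay, and use \cite[Proposition 2.9]{SS} for the $C^{1,\alpha}$ bound; there, $L^\infty$ control of the right-hand side already suffices because $2s>1$, so your H\"older bootstrapping is unnecessary.

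\textbf{The gap.} The comparison step is the actual content of the lemma, and you leave it open. For a fixed profile $\Phi\sim|x|^{-3-2s}$, the ratio $(-\Delta)^s\Phi/\Phi$ tends to the definite number $-C_s\|\Phi\|_{L^1}$ as $|x|\to\infty$. This limit is produced by the region where $\Phi$ carries its mass, and nothing guarantees it exceeds $-\frac12$. Your remedies do not close this:
\begin{itemize}
\item Splitting the integral into near and far parts cannot change this limit, so that remedy does not work.
\item The barrier $\Phi_\delta$ does make the zero-order term dominate, but comparison with it only yields $|\hat\eta_k|\le C|x|^{-3-2s+\delta}$, which is weaker than \eqref{equ4-16}.
\item Deferring to the authors' comparison principle is not an argument.
\end{itemize}

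\textbf{The missing idea.} You need a scaling parameter in the barrier that makes its $L^1$ mass small relative to its tail. The paper takes $\varphi_\lambda=\lambda^{-3-2s}$ on $B_\lambda$ and $|x|^{-3-2s}$ outside, so $\|\varphi_\lambda\|_{L^1}\sim\lambda^{-2s}$ with the same tail. It proves $(-\Delta)^s\varphi_\lambda\ge-C_1\lambda^{-2s}\varphi_\lambda$ for $|x|\ge R_0$, with $C_1$ independent of $\lambda$. Choosing $\lambda>(C_1/\rho_0)^{1/(2s)}$, a large multiple of $\varphi_\lambda$ is a supersolution outside $B_R$ and dominates $|\hat\eta_k|\le 1$ inside.

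With your $\Phi$, the same is achieved simply by dilating. Set $\Phi_\lambda(x):=\Phi(x/\lambda)$; then $(-\Delta)^s\Phi_\lambda(x)=\lambda^{-2s}((-\Delta)^s\Phi)(x/\lambda)$. Since $|(-\Delta)^s\Phi|\le C_*(1+|x|)^{-3-2s}\le C_*\Phi$ on all of $\mathbb{R}^3$, this gives $|(-\Delta)^s\Phi_\lambda|\le C_*\lambda^{-2s}\Phi_\lambda$. Take $\lambda\ge 1$ with $C_*\lambda^{-2s}\le\frac14$, and use $\Phi_\lambda\ge\Phi\ge(1+|x|)^{-3-2s}$. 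Then $(-\Delta)^s(M\Phi_\lambda)+c_kM\Phi_\lambda\ge\frac M4(1+|x|)^{-3-2s}\ge|f_k|$ for $|x|\ge R_1$ once $M$ is large. Your maximum-principle argument then closes as written.
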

	\begin{proof}
		Lemma \ref{lem4-1} shows that
		\begin{equation}\label{equ4-17}
			\|\hat{\eta}_{k}\|_{H^{s}(\mathbb{R}^{3})}^{2}=\int_{\mathbb{R}^{3}}|(-\Delta)^{\frac{s}{2}}\hat{\eta}_{k}|^{2}dx+\int_{\mathbb{R}^{3}}\hat{\eta}_{k}^{2}dx\leq C\quad \hbox{as $k\rightarrow\infty$}.
		\end{equation}
		We rewrite \eqref{equ4-8} as follows
		\begin{equation*}
			(-\Delta)^{s}\hat{\eta}_{k}+\rho_{k}(x)\hat{\eta}_{k}=f_{k}(x) \quad\hbox{in $\mathbb{R}^3$},
		\end{equation*}
		where $\rho_{k}(x):=\varepsilon_{k}^{2s}V(\varepsilon_{k} x+x_{1k})-\varepsilon_{k}^{2s}\mu_{1k}-\frac{\varepsilon_{k}^{4s-3}}{a^{\ast}}\int_{\mathbb{R}^{3}}\frac{\hat{u}_{1k}^{2}(y)}{|x-y|^{3-2s}}dy-(p-1)[\theta\hat{u}_{1k}+(1-\theta)\hat{u}_{2k}]^{p-2}$ and $f_{k}(x)$ be as in \eqref{equ4-10}. By the boundedness of potential, we then deduce from \eqref{equ4-3}, \eqref{equ4-12} and Lemma \ref{lem5-1} that 
		\begin{equation}\label{equ4-18}
			\rho_{k}(x)\geq\rho_{0}>0\quad \hbox{for $|x|\geq R$ large and $k\rightarrow\infty$}.
		\end{equation}
		Using \eqref{equ4-12} and Lemma \ref{lem5-1}, one can derive that
		\begin{equation*}
			\begin{split}
				\Bigg|&-\frac{\varepsilon_{k}^{4s-3}}{2(a^{\ast})^{2}}\int_{\mathbb{R}^{3}}\int_{\mathbb{R}^{3}}\frac{[\hat{u}_{1k}(y)+\hat{u}_{2k}(y)]}{|x-y|^{3-2s}}\hat{\eta}_{k}(y)\hat{u}^{2}_{1k}(x)dxdy\hat{u}_{2k}\\
				&-\frac{\varepsilon_{k}^{4s-3}}{2(a^{\ast})^{2}}\int_{\mathbb{R}^{3}}\int_{\mathbb{R}^{3}}\frac{[\hat{u}_{1k}(x)+\hat{u}_{2k}(x)]}{|x-y|^{3-2s}}\hat{\eta}_{k}(x)\hat{u}^{2}_{2k}(y)dxdy\hat{u}_{2k}\\
				&-\frac{p-2}{a^{\ast}}\int_{\mathbb{R}^{3}}[\theta\hat{u}_{1k}+(1-\theta)\hat{u}_{2k}]^{p-1}\hat{\eta}_{k}dx\hat{u}_{2k}\Bigg|\leq \frac{C}{|x|^{3+2s}}\quad \hbox{as $k\rightarrow\infty$}.
			\end{split}
		\end{equation*}
		By \eqref{equ4-17} and Lemma \ref{lem5-1}, we also have
		\begin{equation*}
			\begin{split}
				&	\left| \frac{\varepsilon_{k}^{4s-3}}{a^{\ast}}\int_{\mathbb{R}^{3}}\frac{[\hat{u}_{2k}(y)+\hat{u}_{1k}(y)]}{|x-y|^{3-2s}}\hat{\eta}_{k}(y)dy\hat{u}_{2k}\right| \\
				\leq&C\left(\int_{\mathbb{R}^{3}}\frac{|\hat{u}_{2k}(y)+\hat{u}_{1k}(y)|^{2}}{|x-y|^{3-2s}}dy \right)^{\frac{1}{2}}\left(\int_{\mathbb{R}^{3}}\frac{|\hat{\eta}_{k}(y)|^{2}}{|x-y|^{3-2s}}dy \right)^{\frac{1}{2}}|\hat{u}_{2k}(x)|	\\
				\leq&C\|\hat{u}_{1k}+\hat{u}_{2k}\|_{H^{s}(\mathbb{R}^{3})}\|\hat{\eta}_{k}\|_{H^{s}(\mathbb{R}^{3})}|\hat{u}_{2k}(x)|\leq \frac{C}{|x|^{3+2s}}\quad \hbox{as $k\rightarrow\infty$}.
			\end{split}
		\end{equation*}
		Thus we obtain
		\begin{equation}\label{equ4-19}
			|f_{k}(x)|\leq \frac{C}{|x|^{3+2s}}\quad \hbox{as $k\rightarrow\infty$}.
		\end{equation}
		In view of \eqref{equ4-18} and \eqref{equ4-19}, it then follows from Lemma \ref{lem5-2} that
		\begin{equation*}
			|\hat{\eta}_{k}(x)|\leq\frac{C}{1+|x|^{3+2s}}\quad \hbox{as $k\rightarrow\infty$},
		\end{equation*}
		and thus  \eqref{equ4-16} is proved. Further, denote
		\begin{align*} 
			\hat{D}_{k}(x):=-\varepsilon_{k}^{2s}V(\varepsilon_{k}x+x_{1k})\hat{\eta}_{k}+\mu_{1k}\varepsilon_{k}^{2s}\hat{\eta}_{k}+g_{k}(x)+f_{k}(x) 
		\end{align*}
		so that $\hat{\eta}_{k}$ satisfies
		\begin{align*} 
			-\Delta\hat{\eta}_{k}=\hat{D}_{k}(x)\quad \hbox{in $\mathbb{R}^{3}$}.
		\end{align*}
		By the fact that $|\hat{\eta}_{k}|\leq1$ and the boundedness of $V(x)$, we derive from \eqref{equ4-3}, \eqref{equ4-12} and Lemma \ref{lem5-1} that $\hat{D}_{k}(x)\in L^{\infty}(\mathbb{R}^{3})$ as $k\rightarrow\infty$. Then \cite[Proposition 2.9]{SS} yields that
		\begin{equation*}
			\left\|\hat{\eta}_{k} \right\| _{C^{1, \alpha}(\mathbb{R}^{3})}\leq C\left( \left\|\hat{\eta}_{k} \right\| _{\infty}+\left\|\hat{D}_{k} \right\| _{\infty}\right) \leq C\quad \hbox{as $k\rightarrow\infty$},
		\end{equation*}
		this therefore completes the proof of Lemma \ref{lem4-2}.
	\end{proof}
	
	Using \eqref{equ4-17}, we may assume that $\hat{\eta}_{k} \rightharpoonup\hat{\eta}_{0}$ in $H^{s}(\mathbb{R}^{3})$ as $k\rightarrow\infty$, similar to \eqref{equ4-11}-\eqref{equ4-15}, by \eqref{equ4-3}, \eqref{equ4-12}, \eqref{equ4-16}, Lemma \ref{lem5-1} and the dominated convergence theorem, a simple analysis yields that $\hat{\eta}_{0}$ is a weak solution of 
	\begin{equation}\label{equ4-20}
		(-\Delta)^{s}\hat{\eta}_{0}+\left[ 1-(p-1)Q^{p-2}\right] \hat{\eta}_{0}=-\frac{(p-2)Q}{a^{\ast}}\int_{\mathbb{R}^{3}}Q^{p-1}\hat{\eta}_{0}dx.
	\end{equation}
	where $Q>0$ is the unique positive solution of \eqref{equ5}. Define the linearized operator
	\begin{equation*}
		L:=	(-\Delta)^{s}+1-(p-1)Q^{p-2}.
	\end{equation*}
	Recall from \cite{FF} that
	\begin{equation*}
		{\rm Ker} L={\rm span}\left\lbrace \frac{\partial Q}{\partial x_{1}},  \frac{\partial Q}{\partial x_{2}},  \frac{\partial Q}{\partial x_{3}}\right\rbrace \quad \hbox{and}\quad L\left(Q+\frac{p-2}{2s}x\cdot\nabla Q\right)=-(p-2)Q.
	\end{equation*}
	Thus, \eqref{equ4-20} and Lemma \ref{lem4-2} show that there exist a subsequence of $\{\hat{\eta}_{k}\}$, such that
	\begin{equation}\label{equ4-21}
		\hat{\eta}_{k}\rightarrow\hat{\eta}_{0}\quad \hbox{in $C_{loc}(\mathbb{R}^{3})$ as $k\rightarrow\infty$},
	\end{equation}
	and
	\begin{equation}\label{equ4-22}
		\hat{\eta}_{0}(x)=b_{0}\left(Q+\frac{p-2}{2s}x\cdot\nabla Q\right)+\sum_{i=1}^{3}b_{i}\frac{\partial Q}{\partial x_{i}},
	\end{equation}
	where $b_{0}, b_{1}, b_{2}$ and $b_{3}$ are some constants.
	
	\begin{lemma}\label{lem4-3}
		Under the assumptions of Theorem \ref{the3}, and assume that $V_{0}(x)$ is defined by \eqref{equ13}, then there holds that
		\begin{equation}\label{equ4-23}
			\frac{b_{0}(p-2)}{2s}\int_{\mathbb{R}^{3}}\frac{\partial V_{0}(x+y_{0})}{\partial x_{j}}(x\cdot\nabla Q^{2})dx-\sum^{3}_{i=1}
			b_{i}\int_{\mathbb{R}^{3}}\frac{\partial^{2}V_{0}(x+y_{0})}{\partial x_{i}\partial x_{j}}Q^{2}dx=0.
		\end{equation}
	\end{lemma}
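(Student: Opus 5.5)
Subtract the local Pohozaev identities for $u_{1k}$ and $u_{2k}$, divide by $\|u_{1k}-u_{2k}\|_\infty$, and pass to the limit against the expansion \eqref{equ4-22} of $\hat\eta_0$. The approach is the standard one from the local case \cite{GSIM,LZ}, adapted to $(-\Delta)^s$ through the Caffarelli--Silvestre extension \cite{CC}.

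\emph{Step 1 (Pohozaev identity in the translation direction).} Extend each $\hat u_{ik}$ to $\mathbb{R}^4_+$. Multiply the extended equation by $\partial_{x_j}$ of the extension and integrate over a half ball $B^+_\delta$ around the origin, with $\delta$ small. All bulk terms and the kinetic term become boundary integrals over $\partial B^+_\delta$. The nonlocal Poisson term $\int \hat u_{ik}^2(y)|x-y|^{2s-3}dy\,\hat u_{ik}$ needs care: differentiating its $x$-dependence gives a term of exact-derivative form over all of $\mathbb{R}^3$, by symmetry of the kernel. Its contribution is therefore a remainder bounded by $C\varepsilon_k^{4s-3}$ times tail integrals, using \eqref{equ4-12} and the polynomial decay \eqref{equ3-31}. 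The result is an identity of the form
\begin{equation*}
\varepsilon_k^{2s}\int_{B_\delta}\frac{\partial V(\varepsilon_k x+x_{1k})}{\partial x_j}\hat u_{ik}^2\,dx=\text{(boundary and tail terms on } \partial B_\delta \text{ and in } \mathbb{R}^4_+\setminus B^+_\delta).
\end{equation*}
For the $x_j$-derivative of $V$ I will use the local structure from $\mathbf{(V_2)}$, $V(x+x_0)\approx V_0(x)$, together with the homogeneity of $V_0$, which is why a $C^2$ expansion of $V_0$ enters. By the decay of Lemma \ref{lem4-2} and of the extensions, the boundary terms are $o(\varepsilon_k^{2s+r-1})$ after choosing $\delta$ suitably.

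\emph{Step 2 (subtract and pass to the limit).} Subtract the identities for $i=1,2$ and divide by $\|\hat u_{1k}-\hat u_{2k}\|_\infty$. The left side becomes
\begin{equation*}
\varepsilon_k^{2s}\int\partial_jV(\varepsilon_k x+x_{1k})(\hat u_{1k}+\hat u_{2k})\hat\eta_k\,dx .
\end{equation*}
Write $x_{1k}=x_0+\varepsilon_k y_0+o(\varepsilon_k)$ by Theorem \ref{the2}. Homogeneity of degree $r$ gives
\begin{equation*}
\partial_j V_0(\varepsilon_k(x+y_k))=\varepsilon_k^{r-1}\,\partial_jV_0(x+y_k).
\end{equation*}
Dividing by $\varepsilon_k^{2s+r-1}$ and using \eqref{equ4-21}, \eqref{equ4-16} and dominated convergence yields
\begin{equation*}
\int_{\mathbb{R}^3}\frac{\partial V_0(x+y_0)}{\partial x_j}\,Q\,\hat\eta_0\,dx=0.
\end{equation*}
The error from replacing $V$ by $V_0$ and $y_k$ by $y_0$ is small relative to $\varepsilon_k^{r-1}$, and the subtracted Poisson remainders are $O(\varepsilon_k^{4s-3})$ relative to this order. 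The latter is a genuine concern that must be checked against $r$ and $s>3/4$, possibly using a better bound on the Poisson difference via $\hat\eta_k$ instead.

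\emph{Step 3 (insert $\hat\eta_0$).} Substitute \eqref{equ4-22}. For the $b_0$ part, $Q\cdot\frac{p-2}{2s}x\cdot\nabla Q=\frac{p-2}{4s}x\cdot\nabla Q^2$. The pure $b_0Q^2$ term is $\frac12\int\partial_jV_0(x+y_0)Q^2$, which vanishes because $y_0$ is a critical point of $H$ from \eqref{equ13}. For the $b_i$ part, $Q\partial_iQ=\frac12\partial_i Q^2$, and integrating by parts (justified by decay of $Q$ and polynomial growth of $V_0$) turns it into $-\frac12\int\partial_{ij}V_0(x+y_0)Q^2$. Multiplying by $2$ gives exactly \eqref{equ4-23}.

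The main obstacle is Step 1: establishing the fractional local Pohozaev identity with boundary terms controlled uniformly in $k$, and showing that the nonlocal Hartree contributions and extension boundary terms are of strictly lower order than $\varepsilon_k^{2s+r-1}$. I would try first to bound the extension $\tilde\eta_k$ and its gradient on $\partial B^+_\delta$ via the Poisson kernel together with \eqref{equ4-16}.
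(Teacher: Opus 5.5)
Your Steps 2 and 3 are essentially the paper's argument. Step 3 is exactly its computation: $\partial_jH(y_0)=0$ removes the $b_0Q^2$ term, and one integration by parts converts the $b_i$ terms. Step 1, however, has a genuine gap, and it comes from localizing.

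In \cite{GSIM,LZ} a Pohozaev identity on a small ball is harmless because the boundary terms are exponentially small. Here Lemma \ref{lem4-2} and Lemma \ref{lem5-1} give only $\hat u_{ik},|\hat\eta_k|\le C(1+|x|)^{-3-2s}$. Take the ball of radius $\delta$ in original variables, i.e. $\delta/\varepsilon_k$ after rescaling. After subtraction and division by $\|\hat u_{1k}-\hat u_{2k}\|_\infty$, the $\mu$-flux alone is bounded only by $C\int_{\partial B_{\delta/\varepsilon_k}}|x|^{-6-4s}dS\le C(\varepsilon_k/\delta)^{4+4s}$. It has an $O(1)$ coefficient, since $\mu_{ik}\varepsilon_k^{2s}\to-1$.

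The main term is of order $\varepsilon_k^{2s+r}$, not $\varepsilon_k^{2s+r-1}$. Integrating $\varepsilon_k^{2s}\int V(\varepsilon_kx+x_{1k})\hat u_{ik}\partial_j\hat u_{ik}$ by parts produces $\partial_{x_j}[V(\varepsilon_kx+x_{1k})]=\varepsilon_k(\partial_jV)(\varepsilon_kx+x_{1k})$, and Step 2 drops this chain-rule factor $\varepsilon_k$.

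So, with the available bounds, your boundary terms are negligible only for $r<4+2s$. The lemma, however, is meaningful for every $r<3+4s$, which is the range where $H$ is finite by \eqref{equ6}. Since $s>\frac12$, we have $3+4s>4+2s$, so your claim that the boundary terms are lower order is unproven and, with these bounds, false in part of the admissible range. Likewise, the crude $O(\varepsilon_k^{4s-3})$ bound on the Hartree remainders that worries you in Step 2 would be fatal, because $4s-3<2s+r$. What saves that term is exact cancellation, not smallness.

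The missing idea is that the translation identity needs no localization. The paper multiplies \eqref{equ4-7} by $\partial_j\hat u_{ik}$ and integrates over all of $\mathbb{R}^3$. Then, for each fixed $k$, every term except the potential term vanishes exactly:
\begin{itemize}
\item the fractional kinetic term, by Lemma \ref{lem5-3}: the extension on half balls of radius $R\to\infty$ has fluxes that vanish for fixed $k$ (equivalently, by Plancherel, because $\xi_j|\xi|^{2s}$ is odd);
\item the $\mu$- and $p$-terms, as exact derivatives, see \eqref{equ4-26};
\item the Hartree term, by the antisymmetry $\int\!\!\int\frac{x_j-y_j}{|x-y|^{5-2s}}\hat u_{ik}^2(x)\hat u_{ik}^2(y)\,dx\,dy=0$, see \eqref{equ4-27}.
\end{itemize}
This gives the exact identity $\int_{\mathbb{R}^3}\partial_{x_j}[V(\varepsilon_kx+x_{1k})]\hat u_{ik}^2dx=0$. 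Subtracting the two identities and dividing then gives $\int_{\mathbb{R}^3}\partial_{x_j}[V(\varepsilon_kx+x_{1k})](\hat u_{1k}+\hat u_{2k})\hat\eta_k\,dx=0$, with no error terms.

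Only this potential integral must be expanded. Its part where $|\varepsilon_kx+x_{1k}-x_0|\ge\delta$ is $O(\varepsilon_k^{4+4s})$ by the decay of $\hat u_{ik}$ and $\hat\eta_k$, given a bound on $\nabla V$, which the paper uses tacitly. That is $o(\varepsilon_k^{r})$, and your Steps 2 and 3 then go through unchanged.
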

	\begin{proof}
		Multiplying \eqref{equ4-7} by $\frac{\partial\hat{u}_{ik}}{\partial x_{j}}$ and integrating over $\mathbb{R}^{3}$, one has
		\begin{equation}\label{equ4-24}
			\begin{split}
				&\int_{\mathbb{R}^{3}}(-\Delta)^{s} \hat{u}_{ik}\frac{\partial\hat{u}_{ik}}{\partial x_{j}}dx+\varepsilon_{k}^{2s}\int_{\mathbb{R}^{3}}V(\varepsilon_{k}x+x_{1k})\hat{u}_{ik}\frac{\partial\hat{u}_{ik}}{\partial x_{j}}dx\\
				=&\mu_{ik}\varepsilon_{k}^{2s}\int_{\mathbb{R}^{3}}\hat{u}_{ik}\frac{\partial\hat{u}_{ik}}{\partial x_{j}}dx+\frac{\varepsilon_{k}^{4s-3}}{a^{\ast}}\int_{\mathbb{R}^{3}}\int_{\mathbb{R}^{3}}\frac{\hat{u}_{ik}^{2}(y)}{|x-y|^{3-2s}}\hat{u}_{ik}(x)\frac{\partial\hat{u}_{ik}}{\partial x_{j}}dydx+\int_{\mathbb{R}^{3}}\hat{u}_{ik}^{p-1}\frac{\partial\hat{u}_{ik}}{\partial x_{j}}dx.
			\end{split}
		\end{equation}
		Direct calculation indicates that
		\begin{equation}\label{equ4-25}
			\begin{split}
				&\varepsilon_{k}^{2s}\int_{\mathbb{R}^{3}}V(\varepsilon_{k}x+x_{1k})\hat{u}_{ik}\frac{\partial\hat{u}_{ik}}{\partial x_{j}}dx=\varepsilon_{k}^{2s}\lim_{R\rightarrow\infty}\int_{ B_{R}(0)}V(\varepsilon_{k}x+x_{1k})\hat{u}_{ik}\frac{\partial\hat{u}_{ik}}{\partial x_{j}}dx\\
				=&\frac{\varepsilon_{k}^{2s}}{2}\lim_{R\rightarrow\infty}\int_{ \partial B_{R}(0)}V(\varepsilon_{k}x+x_{1k})\hat{u}_{ik}^{2}\nu_{j} dS-\frac{\varepsilon_{k}^{2s}}{2}\int_{\mathbb{R}^{3}}\frac{\partial V(\varepsilon_{k}x+x_{1k})}{\partial x_{j}}\hat{u}_{ik}^{2}dx\\
				=&-\frac{\varepsilon_{k}^{2s}}{2}\int_{\mathbb{R}^{3}}\frac{\partial V(\varepsilon_{k}x+x_{1k})}{\partial x_{j}}\hat{u}_{ik}^{2}dx,
			\end{split}
		\end{equation}
		where we have used the polynomial decay \eqref{equ5-1}, which shows that
		\begin{equation*}
			\left|\int_{ \partial B_{R}(0)}V(\varepsilon_{k}x+x_{1k})\hat{u}_{ik}^{2}\nu_{j} dS\right| \leq C R^{-6-4s}R^{2}\rightarrow0\quad \hbox{as $R\rightarrow\infty$}.
		\end{equation*}
		Similarly, using Lemma \ref{lem5-1}, we also have
		\begin{equation}\label{equ4-26}
			\begin{split}
				&	\left| \mu_{ik}\varepsilon_{k}^{2s}\int_{\mathbb{R}^{3}}\hat{u}_{ik}\frac{\partial\hat{u}_{ik}}{\partial x_{j}}dx+\int_{\mathbb{R}^{3}}\hat{u}_{ik}^{p-1}\frac{\partial\hat{u}_{ik}}{\partial x_{j}}dx\right| \\
				=&\left|\mu_{ik}\varepsilon_{k}^{2s}\lim_{R\rightarrow\infty}\int_{B_{R}(0)}\hat{u}_{ik}\frac{\partial\hat{u}_{ik}}{\partial x_{j}}dx+\lim_{R\rightarrow\infty}\int_{B_{R}(0)}\hat{u}_{ik}^{p-1}\frac{\partial\hat{u}_{ik}}{\partial x_{j}}dx \right| \\
				=&\left|\frac{\mu_{ik}\varepsilon_{k}^{2s}}{2}\lim_{R\rightarrow\infty}\int_{\partial B_{R}(0)}\hat{u}_{ik}^{2}\nu_{j}dS+\frac{1}{p}\lim_{R\rightarrow\infty}\int_{\partial B_{R}(0)}\hat{u}_{ik}^{p}\nu_{j}dS \right| \\
				\leq&C\lim_{R\rightarrow\infty}\left(R^{-6-4s} R^{2}+R^{-p(3+2s)}R^{2}\right) =0.
			\end{split}
		\end{equation}
		Applying \eqref{equ4-12} and Lemma \ref{lem5-1}, one derive that
		\begin{equation}\label{equ4-27}
			\begin{split}
				&	\left| \int_{\mathbb{R}^{3}}\int_{\mathbb{R}^{3}}\frac{\hat{u}_{ik}^{2}(y)}{|x-y|^{3-2s}}\hat{u}_{ik}(x)\frac{\partial\hat{u}_{ik}}{\partial x_{j}}dydx\right| \\
				=&\left| \frac{1}{2}\lim_{R\rightarrow\infty}\int_{ \partial B_{R}(0)}\int_{\mathbb{R}^{3}}\frac{\hat{u}^{2}_{ik}(x)\hat{u}^{2}_{ik}(y)}{|x-y|^{3-2s}}\nu_{j}dydS+\frac{3-2s}{2}	\int_{\mathbb{R}^{3}}\int_{\mathbb{R}^{3}}\frac{x_{j}-y_{j}}{|x-y|^{5-2s}}\hat{u}_{ik}^{2}(x)\hat{u}_{ik}^{2}(y)dxdy\right| \\
				\leq&C\lim_{R\rightarrow\infty}\int_{\mathbb{R}^{3}}\frac{\hat{u}^{2}_{ik}(y)}{|x-y|^{3-2s}}dy\int_{ \partial B_{R}(0)}\hat{u}^{2}_{ik}(x)\nu_{j}dS\leq C\lim_{R\rightarrow\infty}R^{-6-4s} R^{2}=0,
			\end{split}
		\end{equation}
		where we have used the fact that
		\begin{equation*}
			\int_{\mathbb{R}^{3}}\int_{\mathbb{R}^{3}}\frac{x_{j}-y_{j}}{|x-y|^{5-2s}}\hat{u}_{ik}^{2}(x)\hat{u}_{ik}^{2}(y)dxdy=0,
		\end{equation*}
		By \eqref{equ4-24}-\eqref{equ4-26}, one can deduce from Lemma \ref{lem5-3} that
		\begin{equation}\label{equ4-28}
			\begin{split}
				0&=\varepsilon_{k}^{2s}\int_{\mathbb{R}^{3}}\frac{\partial}{\partial x_{j}}V\left[\varepsilon_{k}(x+y_{0})+x_{0} \right] \hat{u}_{ik}^{2}dx\\
				&=\varepsilon_{k}^{2s+r}\left[ \int_{\mathbb{R}^{3}}\frac{\partial}{\partial x_{j}}V_{0}(x+y_{0})\hat{u}_{ik}^{2}dx+o(1)\right] \quad \hbox{as $k\rightarrow\infty$},
			\end{split}
		\end{equation}
		where $V_{0}(x)$ is defined by \eqref{equ13}, $x_{0}\in Z_{0}$ and $y_{0}\in K_{0}$. In view of \eqref{equ6}, \eqref{equ4-5}, \eqref{equ4-16}, \eqref{equ4-21}, \eqref{equ4-22} and \eqref{equ5-1}, one can obtain from \eqref{equ4-28} that
		\begin{equation*}
			\begin{split}
				0=&\int_{\mathbb{R}^{3}}\frac{\partial V_{0}(x+y_{0})}{\partial x_{j}}(\hat{u}_{1k}+\hat{u}_{2k})\hat{\eta}_{k}dx+o(1)\\
				=&2\int_{\mathbb{R}^{3}}\frac{\partial V_{0}(x+y_{0})}{\partial x_{j}}Q\left[ b_{0}\left(Q+\frac{p-2}{2s}x\cdot\nabla Q\right)+\sum_{i=1}^{3}b_{i}\frac{\partial Q}{\partial x_{i}}\right] dx+o(1)\\
				=&2b_{0}\int_{\mathbb{R}^{3}}\frac{\partial V_{0}(x+y_{0})}{\partial x_{j}}Q^{2}dx+\frac{(p-2)b_{0}}{2s}\int_{\mathbb{R}^{3}}\frac{\partial V_{0}(x+y_{0})}{\partial x_{j}}x\cdot\nabla Q^{2}dx\\
				&+\sum_{i=1}^{3}b_{i}\int_{\mathbb{R}^{3}}\frac{\partial V_{0}(x+y_{0})}{\partial x_{j}}\frac{\partial Q^{2}}{\partial x_{i}}dx+o(1)\\
				=&\frac{b_{0}(p-2)}{2s}\int_{\mathbb{R}^{3}}\frac{\partial V_{0}(x+y_{0})}{\partial x_{j}}(x\cdot\nabla Q^{2})dx-\sum^{3}_{i=1}
				b_{i}\int_{\mathbb{R}^{3}}\frac{\partial^{2}V_{0}(x+y_{0})}{\partial x_{i}\partial x_{j}}Q^{2}dx+o(1)\quad \hbox{as $k\rightarrow\infty$},
			\end{split}
		\end{equation*}
		which shows that \eqref{equ4-23} holds, and we thus complete the proof of Lemma \ref{lem4-3}.
	\end{proof}
	
	\begin{lemma}\label{lem4-4}
		Under the assumptions of Theorem \ref{the3}, and assume that $V_{0}(x)$ is defined by \eqref{equ13}, then we have that $\hat{\eta}_{0}\equiv0$.
	\end{lemma}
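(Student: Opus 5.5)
The plan is to show $b_0=0$ first, and then use Lemma \ref{lem4-3} together with the non-degeneracy assumption $\mathbf{(V_3)}$ to get $b_1=b_2=b_3=0$. By \eqref{equ4-22} this gives $\hat\eta_0\equiv0$.

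\textbf{Step 1: $b_0=0$.} I would use a Pohozaev-type identity obtained from multiplying \eqref{equ4-7} by $x\cdot\nabla\hat u_{ik}$. Since the operator is nonlocal, this is done through the Caffarelli--Silvestre extension, via the Pohozaev identity of Lemma \ref{lem5-4} (and Lemma \ref{lem5-3}), as announced in the introduction. Combining it with the identity from testing \eqref{equ4-7} against $\hat u_{ik}$ expresses $\mu_{ik}\varepsilon_k^{2s}$ through
\begin{equation*}
\varepsilon_k^{2s}\int_{\mathbb{R}^3}\big(x\cdot\nabla V(\varepsilon_kx+x_{1k})\big)\hat u_{ik}^2dx,\qquad \varepsilon_k^{4s-3}\times(\hbox{Hartree term}),\qquad \int_{\mathbb{R}^3}\hat u_{ik}^pdx.
\end{equation*}
By $\mathbf{(V_2)}$ the first quantity is $O(\varepsilon_k^{2s+r})$; the second is small because $4s-3>0$. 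I would then subtract the identities for $i=1,2$ and divide by $\|\hat u_{1k}-\hat u_{2k}\|_\infty$. The difference of the $L^p$ terms becomes $\int(\hat u_{1k}^{p-1}+\cdots)\hat\eta_k$, which converges to $p\int Q^{p-1}\hat\eta_0$ by \eqref{equ4-16}, \eqref{equ4-21} and dominated convergence. The difference of the multipliers $\varepsilon_k^{2s}(\mu_{1k}-\mu_{2k})/\|\hat u_{1k}-\hat u_{2k}\|_\infty$ already appears in $f_k$ in \eqref{equ4-10}. Matching both expressions yields, in the limit, a linear relation whose only solution is $\int_{\mathbb{R}^3}Q^{p-1}\hat\eta_0\,dx=0$.

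To turn this into $b_0=0$, I would insert \eqref{equ4-22}. The terms $\partial_iQ$ contribute nothing by oddness. For the $b_0$ term, scaling gives
\begin{equation*}
\int_{\mathbb{R}^3} Q^{p-1}\Big(Q+\tfrac{p-2}{2s}x\cdot\nabla Q\Big)dx=\Big(1-\tfrac{3(p-2)}{2sp}\Big)\int_{\mathbb{R}^3} Q^p\,dx\neq0,
\end{equation*}
since $p<2+\frac{4s}{3}$. Hence $b_0=0$.

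A simpler alternative route, which I would try first, is to use the mass constraint. From $\|u_{1k}\|_2=\|u_{2k}\|_2$ we get $\int(\hat u_{1k}+\hat u_{2k})\hat\eta_k=0$, so $\int Q\hat\eta_0=0$. Inserting \eqref{equ4-22} gives $b_0\int Q(Q+\frac{p-2}{2s}x\cdot\nabla Q)=b_0(1-\frac{3(p-2)}{4s})\int Q^2$, and this factor is also nonzero. So $b_0=0$ follows directly, and I would keep the Pohozaev route only as a backup if the limit passage in the mass identity fails.

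\textbf{Step 2: $b_1=b_2=b_3=0$.} With $b_0=0$, \eqref{equ4-23} reduces to $\sum_i b_i\int\partial_{ij}V_0(x+y_0)Q^2dx=0$ for $j=1,2,3$. The coefficient matrix is exactly the Hessian of $H$ at $y_0$, by \eqref{equ13}. Since $\mathbf{(V_3)}$ says $y_0$ is a non-degenerate critical point, this Hessian is invertible, so all $b_i=0$.

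\textbf{Main obstacle.} The delicate step is justifying the limit passages, in particular dominated convergence in the mass or Pohozaev identities. This needs the uniform polynomial decay \eqref{equ4-16} and Lemma \ref{lem5-1}, and, on the Pohozaev route, control of the extension boundary terms.
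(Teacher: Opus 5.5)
Your proposal is correct, and your primary route to $b_0=0$ is genuinely different from the paper's. The paper multiplies \eqref{equ4-7} by $(x-x_{1k})\cdot\nabla\hat u_{ik}$ and evaluates the nonlocal term through the Caffarelli--Silvestre extension (Lemma \ref{lem5-4}). It combines this with the Nehari identity, subtracts the cases $i=1,2$ and divides by $\|\hat u_{1k}-\hat u_{2k}\|_\infty$. This gives $\int_{\mathbb{R}^3}Q^{p-1}\hat\eta_0\,dx=0$, hence $b_0\frac{2sp-3p+6}{2sp}\int_{\mathbb{R}^3}Q^p\,dx=0$.

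You instead use the exact identity $\int_{\mathbb{R}^3}(\hat u_{1k}+\hat u_{2k})\hat\eta_k\,dx=0$, which comes from $\|u_{1k}\|_2=\|u_{2k}\|_2$. The $H^s$ bound of Lemma \ref{lem4-1} gives $\hat\eta_k\rightharpoonup\hat\eta_0$ in $L^2$, and the paper uses $\hat u_{ik}\to Q$ in $L^2$ throughout. A weak--strong pairing then yields $\int_{\mathbb{R}^3}Q\hat\eta_0\,dx=0$, hence $b_0(1-\frac{3(p-2)}{4s})a^{\ast}=0$.

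Your route is shorter. It needs no extension argument or boundary-term control and never touches the Lagrange multipliers. It also avoids the term $\varepsilon_k^{2s}\int_{\mathbb{R}^3}(x-x_{1k})\cdot\nabla V(\varepsilon_kx+x_{1k})\hat u_{ik}^2\,dx$, and $\mathbf{(V_1)}$ does not even guarantee that $\nabla V$ exists. The cost is that you use mass-subcriticality essentially. At $p=2+\frac{4s}{3}$ the factor $1-\frac{3(p-2)}{4s}$ vanishes, so the Pohozaev route is the one that survives in the mass-critical case.

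Your sketch of the backup Pohozaev route is in fact more careful than the paper's write-up. The paper replaces $\mu_{ik}\varepsilon_k^{2s}$ by $-1$ before subtracting. This discards the term $-sa^{\ast}\varepsilon_k^{2s}(\mu_{1k}-\mu_{2k})/\|\hat u_{1k}-\hat u_{2k}\|_\infty$. That term tends to $s(p-2)\int_{\mathbb{R}^3}Q^{p-1}\hat\eta_0\,dx$, so it is not negligible a priori.

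If you eliminate the multipliers through \eqref{equ4-2}, as you propose, the coefficient $\frac{6+(2s-3)p}{2}$ becomes $\frac{6+4s-3p}{2}$. This is still positive, so the paper's conclusion is unaffected. Your Step 2 coincides with the paper's.
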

	\begin{proof}
		Multiplying \eqref{equ4-7} by $(x-x_{1k})\cdot\nabla\hat{u}_{ik}(x)$ and integrating over $\mathbb{R}^{3}$, it follows that
		\begin{equation}\label{equ4-29}
			\begin{split}
				&\int_{\mathbb{R}^{3}}(-\Delta)^{s}\hat{u}_{ik}[(x-x_{1k})\cdot\nabla\hat{u}_{ik}]dx+\varepsilon_{k}^{2s}\int_{\mathbb{R}^{3}}V(\varepsilon_{k}x+x_{1k})\hat{u}_{ik}[(x-x_{1k})\cdot\nabla\hat{u}_{ik}]dx\\
				=&\mu_{ik}\varepsilon_{k}^{2s}\int_{\mathbb{R}^{3}}\hat{u}_{ik}[(x-x_{1k})\cdot\nabla\hat{u}_{ik}]dx+\int_{\mathbb{R}^{3}}\hat{u}_{ik}^{p-1}[(x-x_{1k})\cdot\nabla\hat{u}_{ik}]dx\\
				&+\frac{\varepsilon_{k}^{4s-3}}{a^{\ast}}\int_{\mathbb{R}^{3}}\int_{\mathbb{R}^{3}}\frac{\hat{u}_{ik}^{2}(y)}{|x-y|^{3-2s}}\hat{u}_{ik}(x)[(x-x_{1k})\cdot\nabla\hat{u}_{ik}]dydx.
			\end{split}
		\end{equation}
		Applying \eqref{equ4-7} and Lemma \ref{lem5-4}, one deduce that
		\begin{equation}\label{equ4-30}
			\begin{split}
				&\int_{\mathbb{R}^{3}}(-\Delta)^{s}\hat{u}_{ik}[(x-x_{1k})\cdot\nabla\hat{u}_{ik}]dx=\frac{2s-3}{2} \int_{\mathbb{R}^3} \left| (-\Delta)^{\frac{s}{2}}\hat{u}_{ik}\right| ^2dx\\
				=&-\frac{2s-3}{2}\varepsilon_{k}^{2s}\int_{\mathbb{R}^{3}} V(\varepsilon_{k}x+x_{1k})\hat{u}_{ik}^{2}dx+ \frac{2s-3}{2}\mu_{ik}\varepsilon_{k}^{2s}\int_{\mathbb{R}^{3}}\hat{u}_{ik}^{2}dx\\
				&+\frac{2s-3}{2}\int_{\mathbb{R}^{3}}\hat{u}_{ik}^{p}dx+\frac{2s-3}{2}\frac{\varepsilon_{k}^{4s-3}}{a^{\ast}}\int_{\mathbb{R}^{3}}\int_{\mathbb{R}^{3}}\frac{\hat{u}_{ik}^{2}(y)\hat{u}_{ik}^{2}(x)}{|x-y|^{3-2s}}dydx.
			\end{split}
		\end{equation}
		Integrating by parts, one can see that
		\begin{equation}\label{equ4-31}
			\begin{split}
				&\varepsilon_{k}^{2s}\int_{\mathbb{R}^{3}}V(\varepsilon_{k}x+x_{1k})\hat{u}_{ik}[(x-x_{1k})\cdot\nabla\hat{u}_{ik}]dx\\
				=&\varepsilon_{k}^{2s}\lim_{R\rightarrow\infty}\int_{ B_{R}(0)}V(\varepsilon_{k}x+x_{1k})\hat{u}_{ik}[(x-x_{1k})\cdot\nabla\hat{u}_{ik}]dx\\
				=&\frac{\varepsilon_{k}^{2s}}{2}\lim_{R\rightarrow\infty}\int_{\partial B_{R}(0)}V(\varepsilon_{k}x+x_{1k})(x-x_{1k})\cdot\nu\hat{u}_{ik}^{2}dS\\
				&-\frac{\varepsilon_{k}^{2s}}{2}\int_{\mathbb{R}^{3}}\left[3V(\varepsilon_{k}x+x_{1k})+(x-x_{1k})\cdot\nabla V(\varepsilon_{k}x+x_{1k})\right] \hat{u}_{ik}^{2}dx\\
				=&-\frac{\varepsilon_{k}^{2s}}{2}\int_{\mathbb{R}^{3}}\left[3V(\varepsilon_{k}x+x_{1k})+(x-x_{1k})\cdot\nabla V(\varepsilon_{k}x+x_{1k})\right] \hat{u}_{ik}^{2}dx,
			\end{split}
		\end{equation}
		\begin{equation}\label{equ4-32}
			\begin{split}
				&\mu_{ik}\varepsilon_{k}^{2s}\int_{\mathbb{R}^{3}}\hat{u}_{ik}[(x-x_{1k})\cdot\nabla\hat{u}_{ik}]dx=\mu_{ik}\varepsilon_{k}^{2s}\lim_{R\rightarrow\infty}\int_{ B_{R}(0)}\hat{u}_{ik}[(x-x_{1k})\cdot\nabla\hat{u}_{ik}]dx\\
				=&\frac{\mu_{ik}\varepsilon_{k}^{2s}}{2}\lim_{R\rightarrow\infty}\int_{\partial B_{R}(0)}(x-x_{1k})\cdot\nu\hat{u}_{ik}^{2}dS-\frac{3\mu_{ik}\varepsilon_{k}^{2s}}{2}\int_{\mathbb{R}^{3}}\hat{u}_{ik}^{2}dx=-\frac{3\mu_{ik}\varepsilon_{k}^{2s}}{2}\int_{\mathbb{R}^{3}}\hat{u}_{ik}^{2}dx,
			\end{split}
		\end{equation}
		\begin{equation}\label{equ4-33}
			\begin{split}
				&\int_{\mathbb{R}^{3}}\hat{u}_{ik}^{p-1}[(x-x_{1k})\cdot\nabla\hat{u}_{ik}]dx=\lim_{R\rightarrow\infty}\int_{ B_{R}(0)}\hat{u}_{ik}^{p-1}[(x-x_{1k})\cdot\nabla\hat{u}_{ik}]dx\\
				=&\frac{1}{p}\lim_{R\rightarrow\infty}\int_{\partial B_{R}(0)}(x-x_{1k})\cdot\nu\hat{u}_{ik}^{p}dS-\frac{3}{p}\int_{\mathbb{R}^{3}}\hat{u}_{ik}^{p}dx=-\frac{3}{p}\int_{\mathbb{R}^{3}}\hat{u}_{ik}^{p}dx,
			\end{split}
		\end{equation}
		and
		\begin{equation}\label{equ4-34}
			\begin{split}
				&\frac{\varepsilon_{k}^{4s-3}}{a^{\ast}}\int_{\mathbb{R}^{3}}\int_{\mathbb{R}^{3}}\frac{\hat{u}_{ik}^{2}(y)}{|x-y|^{3-2s}}\hat{u}_{ik}(x)[(x-x_{1k})\cdot\nabla\hat{u}_{ik}]dydx\\
				=&\frac{\varepsilon_{k}^{4s-3}}{a^{\ast}}\lim_{R\rightarrow\infty}\int_{ B_{R}(0)}\int_{\mathbb{R}^{3}}\frac{\hat{u}_{ik}^{2}(y)}{|x-y|^{3-2s}}\hat{u}_{ik}(x)[(x-x_{1k})\cdot\nabla\hat{u}_{ik}]dydx\\
				=&\frac{\varepsilon_{k}^{4s-3}}{2a^{\ast}}\lim_{ R\rightarrow\infty}\int_{ \partial B_{R}(0)}\int_{\mathbb{R}^{3}}\frac{\hat{u}_{ik}^{2}(y)}{|x-y|^{3-2s}}(x-x_{1k})\cdot\nu\hat{u}_{ik}^{2}(x)dydS\\
				&-\frac{3\varepsilon_{k}^{4s-3}}{2a^{\ast}}\int_{\mathbb{R}^{3}}\int_{\mathbb{R}^{3}}\frac{\hat{u}_{ik}^{2}(y)\hat{u}_{ik}^{2}(x)}{|x-y|^{3-2s}}dydx\\
				&+\frac{(3-2s)\varepsilon_{k}^{4s-3}}{2a^{\ast}}\int_{\mathbb{R}^{3}}\int_{\mathbb{R}^{3}}\frac{(x-x_{1k})\cdot(x-y)}{|x-y|^{5-2s}}\hat{u}^{2}_{ik}(x)\hat{u}^{2}_{ik}(y)dydx\\
				=&-\frac{(3+2s)\varepsilon_{k}^{4s-3}}{4a^{\ast}}\int_{\mathbb{R}^{3}}\int_{\mathbb{R}^{3}}\frac{\hat{u}_{ik}^{2}(y)\hat{u}_{ik}^{2}(x)}{|x-y|^{3-2s}}dydx,
			\end{split}
		\end{equation}
		where \eqref{equ4-31}-\eqref{equ4-34} hold due to the facts that
		\begin{equation*}
			\left| \int_{\partial B_{R}(0)}V(\varepsilon_{k}x+x_{1k})(x-x_{1k})\cdot\nu\hat{u}_{ik}^{2}dS\right|
			\leq CR^{-6-4s}R^{2}\rightarrow0\quad \hbox{as $R\rightarrow\infty$},
		\end{equation*}
		\begin{equation*}
			\left| \int_{\partial B_{R}(0)}(x-x_{1k})\cdot\nu\hat{u}_{ik}^{2}dS\right|
			\leq CR^{-6-4s}R^{2}\rightarrow0\quad \hbox{as $R\rightarrow\infty$},
		\end{equation*}
		\begin{equation*}
			\left| \int_{\partial B_{R}(0)}(x-x_{1k})\cdot\nu\hat{u}_{ik}^{p}dS\right| \leq CR^{-p(3+2s)}R^{2}\rightarrow0\quad \hbox{as $R\rightarrow\infty$},
		\end{equation*}
		\begin{align*} 
			&	\left| \int_{ \partial B_{R}(0)}\int_{\mathbb{R}^{3}}\frac{\hat{u}_{ik}^{2}(y)}{|x-y|^{3-2s}}(x-x_{1k})\cdot\nu\hat{u}_{ik}^{2}(x)dydS\right| \\
			\leq&\left| \int_{\mathbb{R}^{3}}\frac{\hat{u}_{ik}^{2}(y)}{|x-y|^{3-2s}}dy\int_{ \partial B_{R}(0)}(x-x_{1k})\cdot\nu\hat{u}_{ik}^{2}(x)dS\right| \leq CR^{-6-4s}R^{2}\rightarrow0\quad \hbox{as $R\rightarrow\infty$},
		\end{align*}
		and
		\begin{align*} 
			\int_{\mathbb{R}^{3}}\int_{\mathbb{R}^{3}}\frac{(x-x_{1k})\cdot(x-y)}{|x-y|^{5-2s}}\hat{u}^{2}_{ik}(x)\hat{u}^{2}_{ik}(y)dydx
			=\frac{1}{2}\int_{\mathbb{R}^{3}}\int_{\mathbb{R}^{3}}\frac{\hat{u}^{2}_{ik}(x)\hat{u}^{2}_{ik}(y)}{|x-y|^{3-2s}}dydx.
		\end{align*}
		It then follows from \eqref{equ4-29}-\eqref{equ4-34} that
		\begin{align*} 
			\left( \frac{3}{p}+\frac{2s-3}{2}\right) \int_{\mathbb{R}^{3}}\hat{u}_{ik}^{p}dx=&sa^{\ast}+s\varepsilon_{k}^{2s}\int_{\mathbb{R}^{3}}V(\varepsilon_{k}x+x_{1k})\hat{u}_{ik}^{2}dx\\
			&+\frac{1}{2}\varepsilon_{k}^{2s}\int_{\mathbb{R}^{3}}\left[ (x-x_{1k})\cdot \nabla V(\varepsilon_{k}x+x_{1k})\right] \hat{u}_{ik}^{2}dx\\
			&-\frac{(6s-3)\varepsilon_{k}^{4s-3}}{4a^{\ast}}\int_{\mathbb{R}^{3}}\int_{\mathbb{R}^{3}}\frac{\hat{u}_{ik}^{2}(y)\hat{u}_{ik}^{2}(x)}{|x-y|^{3-2s}}dydx \quad \hbox{as $k\rightarrow\infty$},
		\end{align*}
		where we have used the facts that $\mu_{ik}\varepsilon^{2s}_{k}\rightarrow-1$ and $\hat{u}_{ik}(x)\rightarrow Q(x)$ in $L^{\infty}(\mathbb{R}^{3})$
		as $k\rightarrow\infty$. Thus we have
		\begin{equation*}
			\begin{split}
				&\frac{6+(2s-3)p}{2}\int_{\mathbb{R}^{3}}\left[\theta \hat{u}_{1k}+(1-\theta)\hat{u}_{2k}\right]^{p-1}\hat{\eta}_{k}dx\\
				=&s\varepsilon_{k}^{2s}\int_{\mathbb{R}^{3}}V(\varepsilon_{k}x+x_{1k})(\hat{u}_{1k}+\hat{u}_{2k})\hat{\eta}_{k}dx\\
				&+\frac{1}{2}\varepsilon_{k}^{2s}\int_{\mathbb{R}^{3}}\left[ (x-x_{1k})\cdot \nabla V(\varepsilon_{k}x+x_{1k})\right](\hat{u}_{1k}+\hat{u}_{2k})\hat{\eta}_{k}dx\\
				&-\frac{(6s-3)\varepsilon_{k}^{4s-3}}{4a^{\ast}}\int_{\mathbb{R}^{3}}\int_{\mathbb{R}^{3}}\frac{\hat{u}_{1k}^{2}(x)}{|x-y|^{3-2s}}(\hat{u}_{1k}(y)+\hat{u}_{2k}(y))\hat{\eta}_{k}(y)dydx\\
				&-\frac{(6s-3)\varepsilon_{k}^{4s-3}}{4a^{\ast}}\int_{\mathbb{R}^{3}}\int_{\mathbb{R}^{3}}\frac{\hat{u}_{2k}^{2}(y)}{|x-y|^{3-2s}}(\hat{u}_{1k}(x)+\hat{u}_{2k}(x))\hat{\eta}_{k}(x)dydx:=A_{k}(x).
			\end{split}
		\end{equation*}
		Using polynomial decay \eqref{equ4-16} and \eqref{equ5-1}, similar arguments of Lemma \ref{lem4-1} yield that
		\begin{equation*}
			A_{k}(x)=O(\varepsilon_{k}^{4s-3})\quad \hbox{as $k\rightarrow\infty$}.
		\end{equation*}
		By the fact that $\hat{u}_{ik}(x)\rightarrow Q(x)$ in $L^{\infty}(\mathbb{R}^{3})$ as $k\rightarrow\infty$, we then deduce from \eqref{equ4-16}, \eqref{equ4-22} and \eqref{equ5-1} that
		\begin{equation*}
			\begin{split}
				0&=\int_{\mathbb{R}^{3}}Q^{p-1}\hat{\eta}_{0}dx\\
				&=\int_{\mathbb{R}^{3}}Q^{p-1}\left[ b_{0}\left(Q+\frac{p-2}{2s}x\cdot\nabla Q\right)+\sum_{i=1}^{3}b_{i}\frac{\partial Q}{\partial x_{i}}\right] dx\\
				&=b_{0}\int_{\mathbb{R}^{3}}Q^{p}dx+\frac{b_{0}(p-2)}{2sp}\int_{\mathbb{R}^{3}}x\cdot\nabla Q^{p}dx+\sum_{i=1}^{3}\frac{b_{i}}{p}\int_{\mathbb{R}^{3}}\frac{\partial Q^{p}}{\partial x_{i}}dx\\
				&=b_{0}\int_{\mathbb{R}^{3}}Q^{p}dx-\frac{3b_{0}(p-2)}{2sp}\int_{\mathbb{R}^{3}}Q^{p}dx\\
				&=\frac{2sp-3p+6}{2sp}b_{0}\int_{\mathbb{R}^{3}}Q^{p}dx,
			\end{split}
		\end{equation*}
		which implies that $b_{0}=0$ due to $2<p<\frac{4s}{3}+2<\frac{6}{3-2s}$. Substituting $b_{0}=0$ into \eqref{equ4-23}, one derive that
		\begin{equation*}
			\sum^{3}_{i=1}
			b_{i}\int_{\mathbb{R}^{3}}\frac{\partial^{2}V_{0}(x+y_{0})}{\partial x_{i}\partial x_{j}}Q^{2}dx=0.
		\end{equation*}
		Since $y_{0}$ is the unique and non-degenerate critical point of
		$H(y)$, then we can obtain that $b_{1}=b_{2}=b_{3}=0$, and Lemma \ref{lem4-4} is proved.
	\end{proof}

	Using the Lemmas established above, we now proceed to complete the proof of Theorem \ref{the3}.
	\begin{proof}[Proof of Theorem \ref{the3}] Let \(z_k\) denote a point such that $|\hat{\eta}_k(z_k)| = 1$. Lemma \ref{lem4-2} shows that $|z_k| \leq C$.  Therefore, it follows from \eqref{equ4-21} that $\hat{\eta}_{0}\not\equiv0$ on $\mathbb{R}^{3}$. However, Lemma \ref{lem4-4} yields that $\hat{\eta}_0 \equiv 0$, which contradicts our assumption that $u_{1k}\not\equiv u_{2k}$. This completes the proof of Theorem \ref{the3}.\end{proof}

	\appendix
	
	\section{Basic estimates} 
	In this appendix, we present the detailed proofs of several results used in the proof of Theorem \ref{the3}. The main difficulty in establishing these results stems from the nonlocality of the fractional Laplacian.
	\begin{lemma}\label{lem5-1}
		Let $\hat{u}_{ik}$ be as in \eqref{equ4-4}, then there exists a constant $C>0$, independent of $k$, such that
		\begin{equation}\label{equ5-1}
			\hat{u}_{ik}(x)+	|\nabla\hat{u}_{ik}(x)|\leq\frac{C}{1+|x|^{3+2s}}\quad \hbox{as $k\rightarrow\infty$}.
		\end{equation}
	\end{lemma}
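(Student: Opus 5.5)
The estimate splits into two parts. The bound on $\hat{u}_{ik}$ itself comes essentially for free from Section \ref{sec 3}. The work is in the gradient bound, which we derive from a uniform local $C^{2s}$ (hence $C^{1,\beta}$, since $2s>3/2>1$) estimate on unit balls whose size is controlled by the local $L^\infty$ size of the solution and of the right-hand side.

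\textbf{The bound on $\hat{u}_{ik}$.} By \eqref{equ4-4}, $\hat{u}_{ik}(x)=\sqrt{a^{\ast}}\,\bar{w}_{ik}(x+(x_{1k}-x_{ik})/\varepsilon_k)$, where $\bar{w}_{ik}$ is the blow-up \eqref{equ3-33} of $u_{ik}$ around its own maximum point. Both $\bar{w}_{1k}$ and $\bar{w}_{2k}$ converge to $Q/\sqrt{a^\ast}$ in $L^\infty$ by Lemma \ref{lem3-3}, and $Q$ has a strict maximum at $0$. Hence $|x_{1k}-x_{2k}|/\varepsilon_k\to 0$, so the shift is bounded. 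The uniform decay \eqref{equ3-31} (proved for $w_k$; the same comparison argument applies to $\bar{w}_k$, since the centers differ by a bounded amount) then gives $\hat{u}_{ik}(x)\le C(1+|x|)^{-3-2s}$.

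\textbf{The gradient bound: uniform boundedness of the equation.} Write \eqref{equ4-7} as $(-\Delta)^s\hat u_{ik}=F_{ik}$, where
\[
F_{ik}:=\Big(-\varepsilon_k^{2s}V(\varepsilon_kx+x_{1k})+\mu_{ik}\varepsilon_k^{2s}+\tfrac{\varepsilon_k^{4s-3}}{a^\ast}\phi_{ik}\Big)\hat u_{ik}+\hat u_{ik}^{p-1},\qquad \phi_{ik}:=\int_{\mathbb{R}^3}\frac{\hat u_{ik}^2(y)}{|x-y|^{3-2s}}\,dy .
\]
By \eqref{equ4-3}, $(V_1)$ and \eqref{equ4-12}, the coefficient in front of $\hat u_{ik}$ is uniformly bounded. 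Therefore $|F_{ik}(y)|\le C\hat u_{ik}(y)\le C(1+|y|)^{-3-2s}$, using $p-1>1$ and $\hat u_{ik}\le C$.

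\textbf{Localized regularity estimate.} Fix $x$ with $|x|\ge 2$. In the spirit of \cite[Theorem 12.2.4]{C}, we apply the interior estimate for $(-\Delta)^s$ on $B_1(x)$:
\[
\|\hat u_{ik}\|_{C^{1,\beta}(B_{1/2}(x))}\le C\Big(\|\hat u_{ik}\|_{L^\infty(B_1(x))}+\|F_{ik}\|_{L^\infty(B_1(x))}+\int_{\mathbb{R}^3}\frac{|\hat u_{ik}(y)|}{(1+|y-x|)^{3+2s}}\,dy\Big).
\]
The first two terms are $O(|x|^{-3-2s})$ by the previous step. This is an interior estimate in which the tail enters through a weighted $L^1$ norm.

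\textbf{The tail term (main obstacle).} The nonlocal tail is the crux: a plain $L^\infty$ bound would destroy the decay. We split the integral into the regions $|y|<|x|/2$ and $|y|\ge|x|/2$.
\begin{itemize}
\item On $|y|<|x|/2$ we have $|y-x|\ge |x|/2$, so this part is at most $C|x|^{-3-2s}\|\hat u_{ik}\|_{L^1}$. This is bounded because $\hat u_{ik}\in L^1$ uniformly, as $3+2s>3$.
\item On $|y|\ge|x|/2$ we have $\hat u_{ik}(y)\le C|x|^{-3-2s}$, and $\int(1+|y-x|)^{-3-2s}dy<\infty$.
\end{itemize}
Hence the tail is $O(|x|^{-3-2s})$, and so $|\nabla\hat u_{ik}(x)|\le C|x|^{-3-2s}$ for $|x|\ge 2$.

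\textbf{Bounded region.} For $|x|\le 2$, uniform $C^{1,\alpha}$ bounds from \cite[Proposition 2.9]{SS}, as used in Lemma \ref{lem3-3}, give $|\nabla \hat u_{ik}|\le C$. Combining the two regions yields \eqref{equ5-1} with $C$ independent of $k$.
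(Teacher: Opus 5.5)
Your route matches the paper's proof of Lemma \ref{lem5-1}. The decay of $\hat{u}_{ik}$ is inherited from \eqref{equ3-31}. The gradient decay comes from the localized $C^{2s}$ estimate (Theorem 12.2.4 of the cited reference) on unit balls $B_1(x)$ with a weighted $L^1$ tail. The tail is split at scale $|x|/2$, and the region near the origin is handled by the global $C^{1,\alpha}$ bound. Your explicit bound $|F_{ik}|\le C\hat{u}_{ik}$ spells out the absorption of the right-hand side that the paper leaves implicit in \eqref{equ5-3}. Your tail split, according to whether $|y|<|x|/2$ or $|y|\ge|x|/2$, is more self-contained than the paper's appeal to an external convolution lemma in \eqref{equ5-6}.

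There is one genuine gap, in your first step: the claim that $(x_{1k}-x_{2k})/\varepsilon_k\to 0$. You need this fact, since $\hat{u}_{2k}$ is centred at $x_{1k}$ and not at its own maximum point $x_{2k}$. However, the reason you give does not prove it. Lemma \ref{lem3-3}, applied to $\{u_{1k}\}$ and to $\{u_{2k}\}$ separately, only describes each blow-up about its own peak. The strict maximum of $Q$ at the origin says nothing about where $x_{2k}$ lies relative to $x_{1k}$. Absent $\mathbf{(V_3)}$, two minimizers could concentrate at different points of $Z_0$, or near different points of $K_0$ at scale $\varepsilon_k$, and each would still converge to $Q/\sqrt{a^{\ast}}$ about its own maximum.

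The missing ingredient is Theorem \ref{the2} combined with $\mathbf{(V_3)}$. Apply \eqref{equ17} to both sequences along a common subsequence, and use that $\mathbf{(V_3)}$ forces $Z_0=\{x_0\}$ and $K_0=\{y_0\}$. This gives $(x_{ik}-x_0)/\varepsilon_k\to y_0$ for $i=1,2$, hence $(x_{1k}-x_{2k})/\varepsilon_k\to 0$, so the shift is bounded. The paper uses this same fact implicitly, for instance when it asserts $\hat{u}_{ik}\to Q$ in $L^\infty(\mathbb{R}^3)$ for both $i$. With this repair, the rest of your argument goes through.
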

	\begin{proof}
		Combining the Nash-Moser iteration method with the properties of the Bessel kernel, similar to \eqref{equ3-30}-\eqref{equ3-31}, one can conclude that
		\begin{equation}\label{equ5-2}
			\hat{u}_{ik}(x)\leq\frac{C}{1+|x|^{3+2s}}\quad \hbox{as $k\rightarrow\infty$}.
		\end{equation}
		Define $\bar{u}_{ik}(z):=\hat{u}_{ik}(z+x)$, then \eqref{equ4-7} shows that
		\begin{equation*}
			\begin{split}
				&(-\Delta)^{s}\bar{u}_{ik}(z)+\varepsilon_{k}^{2s}V(\varepsilon_{k}(z+x)+x_{1k})\bar{u}_{ik}(z)\\
				=&\varepsilon_{k}^{2s}\mu_{ik}\bar{u}_{ik}(z)+\frac{\varepsilon_{k}^{4s-3}}{a^{\ast}}\left(\int_{\mathbb{R}^{3}}\frac{|\bar{u}_{ik}(w)|^{2}}{|z-w|^{3-2s}}dw\right)\bar{u}_{ik}(z)+\bar{u}_{ik}^{p-1}(z) \quad \hbox{in $\mathbb{R}^3$}.
			\end{split}
		\end{equation*}
		Since $V(x)$ is a bounded potential, it then follows from \eqref{equ4-12} and \cite[Theorem 12.2.4]{C} that
		\begin{equation*}
			\|\bar{u}_{ik}(z)\|_{C^{2s}(B_{1}(0))}\leq C	\|\bar{u}_{ik}(z)\|_{L^{\infty}(B_{3}(0))}+\int_{|z|\geq3}\frac{|\bar{u}_{ik}(z)|}{(1+|z|)^{3+2s}}dz\quad \hbox{as $k\rightarrow\infty$}.
		\end{equation*}
		This further implies that
		\begin{equation}\label{equ5-3}
			\|\hat{u}_{ik}(y)\|_{C^{2s}(B_{1}(x))}\leq C	\|\hat{u}_{ik}(y)\|_{L^{\infty}(B_{3}(x))}+\int_{|z|\geq3}\frac{|\hat{u}_{ik}(x+z)|}{(1+|z|)^{3+2s}}dz\quad \hbox{as $k\rightarrow\infty$},
		\end{equation}
		where we have used the facts that
		\begin{equation*}
			\begin{split}
				\|\bar{u}_{ik}(z)\|_{C^{2s}(B_{1}(0))}=	\|\hat{u}_{ik}(y)\|_{C^{2s}(B_{1}(x))}\quad \hbox{and}\quad 		\|\bar{u}_{ik}(z)\|_{L^{\infty}(B_{3}(0))}=\|\hat{u}_{ik}(y)\|_{L^{\infty}(B_{3}(x))}.
			\end{split}
		\end{equation*}		
		For any fixed sufficiently large $R>0$, we observe that $|y|\geq|x|-|y-x|\geq|x|-3$ for $y\in B_{3}(x)$, using \eqref{equ5-2}, one derive that
		\begin{equation}\label{equ5-4}
			\begin{split}
				\|\hat{u}_{ik}(y)\|_{L^{\infty}(B_{3}(x))}&\leq\frac{C}{1+|y|^{3+2s}}\leq\frac{C}{1+(|x|-3)^{3+2s}}\\
				&\leq\frac{C}{\frac{1}{2}+\frac{1}{2}|x|^{3+2s}}\leq\frac{C}{1+|x|^{3+2s}}\quad \hbox{for $|x|\geq R$ as $k\rightarrow\infty$}.
			\end{split}
		\end{equation}
		Applying \eqref{equ5-2} again, it holds that
		\begin{equation}\label{equ5-5}
			\begin{split}
				&	\int_{|z|\geq3}\frac{|\hat{u}_{ik}(x+z)|}{(1+|z|)^{3+2s}}dz\leq C\int_{\mathbb{R}^{3}}\frac{|\hat{u}_{ik}(x+z)|}{(1+|z|)^{3+2s}}dz
				\leq C\int_{\mathbb{R}^{3}}\frac{1}{1+|x+z|^{3+2s}}\frac{1}{(1+|z|)^{3+2s}}dz\\
				=&C\int_{B^{c}_{\frac{|x|}{2}}(x)}\frac{1}{1+|x+z|^{3+2s}}\frac{1}{(1+|z|)^{3+2s}}dz+C\int_{B_{\frac{|x|}{2}}(x)}\frac{1}{1+|x+z|^{3+2s}}\frac{1}{(1+|z|)^{3+2s}}dz.
			\end{split}
		\end{equation}
		By \cite[Lemma A.3]{GYX}, one has
		\begin{equation}\label{equ5-6}
			\begin{split}
				&	\left| \int_{B^{c}_{\frac{|x|}{2}}(x)}\frac{1}{1+|x+z|^{3+2s}}\frac{1}{(1+|z|)^{3+2s}}dz\right| \\
				\leq& C\int_{B^{c}_{\frac{|x|}{2}}(x)}\frac{1}{|x+z|^{3+2s}}\frac{1}{(1+|z|)^{3+2s}}dz\\
				\leq& C\left( \frac{1}{(1+|x|)^{3+2s}}+\frac{1}{(1+|x|)^{3+2s}}\frac{1}{|x|^{2s}}\right)\\
				\leq&\frac{C}{(1+|x|)^{3+2s}}\leq\frac{C}{1+|x|^{3+2s}}\quad \hbox{for $|x|\geq R$ as $k\rightarrow\infty$}.
			\end{split}
		\end{equation}
		Furthermore, a direct calculation implies that		
		\begin{equation}\label{equ5-7}
			\begin{split}
				&	\left| \int_{B_{\frac{|x|}{2}}(x)}\frac{1}{1+|x+z|^{3+2s}}\frac{1}{(1+|z|)^{3+2s}}dz\right| \\
				\leq& C\int_{B_{\frac{|x|}{2}}(x)}\frac{1}{(1+|z|)^{3+2s}}\frac{1}{1+|x+z|^{3+2s}}dz\\
				\leq&\frac{C}{\left( 1+\frac{|x|}{2}\right)^{3+2s} }\int_{B_{\frac{|x|}{2}}(x)}\frac{1}{1+|x+z|^{3+2s}}dz\\
				\leq&\frac{C}{1+|x|^{3+2s}}\int_{\mathbb{R}^{3}}\frac{1}{1+|y|^{3+2s}}dy\leq\frac{C}{1+|x|^{3+2s}}\quad \hbox{as $k\rightarrow\infty$},
			\end{split}
		\end{equation}
		where we have used the fact that $|z|\geq|x|-|x-z|\geq|x|-\frac{|x|}{2}=\frac{|x|}{2}$ for $z\in B_{\frac{|x|}{2}}(x)$. Combining \eqref{equ5-3}-\eqref{equ5-7}, we observe that
		\begin{equation}\label{equ5-8}
			|\nabla\hat{u}_{ik}(x)|\leq\|\hat{u}_{ik}(y)\|_{C^{2s}(B_{1}(x))}\leq\frac{C}{1+|x|^{3+2s}}\quad \hbox{for $|x|\geq R>3$ as $k\rightarrow\infty$}.
		\end{equation}
		Moreover, we also have
		\begin{equation}\label{equ5-9}
			|\nabla\hat{u}_{ik}(x)|\leq\frac{1+R^{3+2s}}{1+|x|^{3+2s}}	\|\nabla\hat{u}_{ik}\|_{L^{\infty}(\mathbb{R}^{3})}\leq\frac{C}{1+|x|^{3+2s}}\quad \hbox{for $|x|\leq R$ as $k\rightarrow\infty$}.
		\end{equation}
		Thus, \eqref{equ5-8} and \eqref{equ5-9} imply \eqref{equ5-1}, and the proof of Lemma \ref{lem5-1} is completed. 
	\end{proof}

	To establish the decay estimate of $\hat{\eta}_{k}(x)$, we develop the following comparison principle.	
	
	\begin{lemma}\label{lem5-2}
		Suppose there exists a constant $C_{0}>0$ such that $|g(x)|\leq\frac{C_{0}}{|x|^{3+2s}}$, and let $R>0$ be sufficiently large so that  $\inf\limits_{|x|>R}\rho(x):=\rho_{0}>0$. If $u\in L^{\infty}(\mathbb{R}^{3})$ is a weak solution to
		\begin{equation}\label{equ5-10}
			\left\{
			\begin{array}{ll}
				(-\Delta)^{s}u+\rho(x)u=g(x) \quad \hbox{in $\mathbb{R}^{3}$},\\
				\lim_{|x|\rightarrow\infty}u(x)=0.
			\end{array}
			\right.
		\end{equation}
		Then the following decay estimate holds
		\begin{equation}\label{equ5-11}
			|u(x)|\leq\frac{C}{1+|x|^{3+2s}}\quad \hbox{for $x\in \mathbb{R}^{3}$}.
		\end{equation}
	\end{lemma}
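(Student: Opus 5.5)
The plan is a comparison argument against an explicit supersolution with the same polynomial tail as the fractional Bessel-type kernel. Let $w(x):=\frac{1}{1+|x|^{3+2s}}$, or more conveniently a smoothed version $w(x)=(1+|x|^{2})^{-\frac{3+2s}{2}}$. The first step is a pointwise estimate for $(-\Delta)^{s}w$ at large $|x|$. Writing $(-\Delta)^{s}w(x)=C_{s}\,P.V.\int\frac{w(x)-w(y)}{|x-y|^{3+2s}}dy$ and splitting the integral into $B_{|x|/2}(x)$, $B_{|x|/2}(0)$ and the rest (as in \eqref{equ5-5}--\eqref{equ5-7}, using \cite[Lemma A.3]{GYX}), one expects
\begin{equation*}
|(-\Delta)^{s}w(x)|\leq \frac{C_{1}}{|x|^{3+2s}}\quad\hbox{for $|x|\geq R_{1}$}.
\end{equation*}
Here the near region contributes $O(|x|^{-3-2s-2s})$ by Taylor expansion, and the region near the origin contributes $-\|w\|_{1}|x|^{-3-2s}+$ lower order. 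The key point is that this is of the same order as $w$ itself, and not larger.

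Next, with $A>0$ to be chosen, set $\bar w:=A w$. For $|x|\geq R_{2}:=\max\{R,R_{1}\}$ we get
\begin{equation*}
(-\Delta)^{s}\bar w+\rho\bar w\geq A\Big(\rho_{0}-C_{1}\tfrac{1+|x|^{3+2s}}{|x|^{3+2s}}\Big)w.
\end{equation*}
This is problematic unless $\rho_{0}>2C_{1}$, so the choice must be refined. I would instead use $w_{\lambda}(x)=w(x/\lambda)$. Scaling gives $(-\Delta)^{s}w_{\lambda}=\lambda^{-2s}((-\Delta)^{s}w)(x/\lambda)\geq -C_{1}\lambda^{-2s}w_{\lambda}$ for $|x|\geq \lambda R_{1}$, with $w_{\lambda}\sim\lambda^{3+2s}|x|^{-3-2s}$. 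Choosing $\lambda$ large so that $C_{1}\lambda^{-2s}\leq\rho_{0}/2$, we obtain $(-\Delta)^{s}\bar w+\rho\bar w\geq \frac{\rho_{0}}{2}A w_{\lambda}\geq \frac{C_{0}}{|x|^{3+2s}}\geq |g|$ on $\{|x|\geq R_{3}\}$ once $A$ is large. Then increase $A$ further so that $\bar w\geq \|u\|_{\infty}$ on $B_{R_{3}}$, which is possible since $u\in L^{\infty}$.

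Finally, apply a maximum principle to $\phi:=(u-\bar w)^{+}$ (and symmetrically to $-u$) on the exterior domain $\Omega=\{|x|>R_{3}\}$. The function $v=u-\bar w$ satisfies $(-\Delta)^{s}v+\rho v\leq 0$ in $\Omega$, $v\leq0$ in $\Omega^{c}$, and $v\to0$ at infinity. If $\sup v>0$, it is attained at some $x^{*}\in\Omega$ because of the decay at infinity. At $x^{*}$ we have $(-\Delta)^{s}v(x^{*})>0$ (nonlocal: $v(x^{*})-v(y)\geq0$, strictly on a set of positive measure, e.g.\ $\Omega^{c}$) and $\rho(x^{*})v(x^{*})>0$, which is a contradiction. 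Since $u$ is only a weak solution, I would run this in weak form: test with $\phi\in H^{s}$ (supported in $\overline\Omega$, and bounded with $\phi\to 0$ at infinity, so admissible after truncation), using $\langle(-\Delta)^{s}v,v^{+}\rangle\geq\|(-\Delta)^{s/2}v^{+}\|_{2}^{2}$, to get $\|(-\Delta)^{s/2}\phi\|^{2}+\rho_{0}\|\phi\|_{2}^{2}\leq0$. If $u\notin L^{2}$ is a concern, I would localize with a cutoff and pass to the limit.

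This gives $|u|\leq \bar w\leq C/(1+|x|^{3+2s})$ on $\Omega$, and boundedness handles $B_{R_{3}}$. The main obstacle is the first step, the sharp two-sided tail bound $(-\Delta)^{s}w=O(|x|^{-3-2s})$ together with the scaling trick to beat the constant. The weak-form maximum principle on an unbounded exterior domain is the second delicate point.
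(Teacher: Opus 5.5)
Your proposal is correct and is essentially the paper's own argument. The paper also uses a barrier with an $|x|^{-3-2s}$ tail: the flattened profile $\varphi_{\lambda}$, equal to $\lambda^{-3-2s}$ on $B_{\lambda}(0)$ and $|x|^{-3-2s}$ outside. It computes region by region that $(-\Delta)^{s}\varphi_{\lambda}\geq -C_{1}\lambda^{-2s}\varphi_{\lambda}$ for large $|x|$, with constants not depending on $\lambda$; you get the same bound more economically by scaling a smooth profile. It then chooses $\lambda$ so that $C_{1}\lambda^{-2s}<\rho_{0}$ and compares on $\{|x|\geq R\}$. The only real difference is the final step: the paper argues pointwise at a maximum point of $|u|-v$, which tacitly uses Kato's inequality and pointwise regularity of $u$, while your weak-form comparison on $(\pm u-\bar w)^{+}$ is, if anything, the more careful version.
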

	\begin{proof}
		For $s\in (0, 1)$ and $\lambda>0$, let
		\begin{equation*}
			\varphi_{\lambda}(x):=
			\left\{
			\begin{array}{ll}
				\lambda^{-3-2s} &\quad \hbox{$|x|<\lambda$},\\
				|x|^{-3-2s}&\quad \hbox{$|x|\geq\lambda$}.
			\end{array}
			\right.
		\end{equation*}
		We first claim that for any $\lambda>0$, there exist $R_{0}>0$ and $C_{1}>0$ such that
		\begin{equation}\label{equ5-12}
			-C_{1}\lambda^{-2s}\varphi_{\lambda}(x)\leq(-\Delta)^{s}\varphi_{\lambda}(x)\leq-C_{1}^{-1}\lambda^{-2s}\varphi_{\lambda}(x)\quad \hbox{for $x\in B^{c}_{R_{0}}(0)$}.
		\end{equation}
		In fact, assume that $\lambda>0$ and $x\in \mathbb{R}^{3}$ satisfies $|x|>3\lambda$, one has
		\begin{equation}\label{equ5-13}
			(-\Delta)^{s}\varphi_{\lambda}(x)=-\frac{C_{s}}{2}\int_{\mathbb{R}^{3}}\frac{\varphi_{\lambda}(x+y)+\varphi_{\lambda}(x-y)-2\varphi_{\lambda}(x)}{|y|^{3+2s}}dy.
		\end{equation}		
		For $y\in B_{\frac{|x|}{3}}(0)$, we have that $|x\pm y|\geq\lambda$. By Taylor expansion, we obtain that
		\begin{equation}\label{equ5-14}
			\begin{split}
				&	\left| \int_{B_{\frac{|x|}{3}}(0)}\frac{\varphi_{\lambda}(x+y)+\varphi_{\lambda}(x-y)-2\varphi_{\lambda}(x)}{|y|^{3+2s}}dy\right| \\
				=&|x|^{-3-4s}\left| \int_{B_{\frac{1}{3}}(0)}\frac{\left| y+\frac{x}{|x|}\right|^{-3-2s}+\left| y-\frac{x}{|x|}\right|^{-3-2s}-2}{|y|^{3+2s}}dy\right| \\
				\leq&C_{2}|x|^{-3-4s}\int_{B_{\frac{1}{3}}(0)}\frac{|y|^{2}}{|y|^{3+2s}}dy\leq C_{3}|x|^{-3-4s}.
			\end{split}
		\end{equation}
		For $y\in B_{\frac{|x|}{3}}(x)\backslash B_{\lambda}(x)$, we see that $|x+y|\geq|x-y|\geq\lambda$. Moreover, we also have $\left| y+\frac{x}{|x|}\right|\geq\left| y-\frac{x}{|x|}\right|$ for $y\in B_{\frac{1}{3}}\left( \frac{x}{|x|}\right)\backslash B_{\frac{\lambda}{|x|}}\left( \frac{x}{|x|}\right)$ and $|y|\geq\frac{2}{3}$ for $y\in B_{\frac{1}{3}}\left( \frac{x}{|x|}\right)$. Then we obtain that
		\begin{equation}\label{equ5-15}
			\begin{split}
				&\int_{B_{\frac{|x|}{3}}(x)\backslash B_{\lambda}(x)}\frac{\varphi_{\lambda}(x+y)+\varphi_{\lambda}(x-y)-2\varphi_{\lambda}(x)}{|y|^{3+2s}}dy \\
				=&\int_{B_{\frac{|x|}{3}}(x)\backslash B_{\lambda}(x)}\frac{|x+y|^{-3-2s}+|x-y|^{-3-2s}-2|x|^{-3-2s}}{|y|^{3+2s}}\\
				=&|x|^{-3-4s}\int_{B_{\frac{1}{3}}\left( \frac{x}{|x|}\right)\backslash B_{\frac{\lambda}{|x|}}\left( \frac{x}{|x|}\right)}\frac{\left| y+\frac{x}{|x|}\right|^{-3-2s}+\left| y-\frac{x}{|x|}\right|^{-3-2s}-2}{|y|^{3+2s}}dy\\
				\leq&C_{4}|x|^{-3-4s}\int_{B_{\frac{1}{3}}\left( \frac{x}{|x|}\right)\backslash B_{\frac{\lambda}{|x|}}\left( \frac{x}{|x|}\right)}\left|y-\frac{x}{|x|} \right|^{-3-2s} dy\leq C_{5}\lambda^{-2s}|x|^{-3-2s}.
			\end{split}
		\end{equation}
		Moreover, note that $|y|\leq\frac{4}{3}$ for $y\in B_{\frac{1}{3}}\left( \frac{x}{|x|}\right)$, we have
		\begin{equation}\label{equ5-16}
			\begin{split}
				&\int_{B_{\frac{|x|}{3}}(x)\backslash B_{\lambda}(x)}\frac{\varphi_{\lambda}(x+y)+\varphi_{\lambda}(x-y)-2\varphi_{\lambda}(x)}{|y|^{3+2s}}dy\\
				\geq&|x|^{-3-4s}\left( \int_{B_{\frac{1}{3}}\left( \frac{x}{|x|}\right)\backslash B_{\frac{\lambda}{|x|}}\left( \frac{x}{|x|}\right)}\frac{\left|y-\frac{x}{|x|} \right|^{-3-2s}}{|y|^{3+2s}}dy-\int_{B_{\frac{1}{3}}\left( \frac{x}{|x|}\right)}\frac{2}{|y|^{3+2s}}dy\right) \\
				\geq& C_{6}|x|^{-3-4s}\int_{B_{\frac{1}{3}}\left( \frac{x}{|x|}\right)\backslash B_{\frac{\lambda}{|x|}}\left( \frac{x}{|x|}\right)}\left|y-\frac{x}{|x|} \right|^{-3-2s}dy-C_{7}|x|^{-3-4s}\\
				\geq&C_{8}\lambda^{-2s}|x|^{-3-2s}-C_{9}|x|^{-3-4s}.
			\end{split}
		\end{equation}
		For $y\in B_{\lambda}(x)$, we can see that $|x+y|>\lambda>|x-y|$ and $|y|\geq|x|-\lambda\geq\frac{2|x|}{3}$. Then we have
		\begin{equation}\label{equ5-17}
			\begin{split}
				&\int_{ B_{\lambda}(x)}\frac{\varphi_{\lambda}(x+y)+\varphi_{\lambda}(x-y)-2\varphi_{\lambda}(x)}{|y|^{3+2s}}dy\\
				=&\int_{ B_{\lambda}(x)}\frac{|x+y|^{-3-2s}+\lambda^{-3-2s}-2|x|^{-3-2s}}{|y|^{3+2s}}dy\\
				\leq&2\int_{ B_{\lambda}(x)}\frac{\lambda^{-3-2s}}{|y|^{3+2s}}dy\leq C_{10}\lambda^{-2s}|x|^{-3-2s},
			\end{split}
		\end{equation}
		and
		\begin{equation}\label{equ5-18}
			\begin{split}
				&\int_{ B_{\lambda}(x)}\frac{|x+y|^{-3-2s}+\lambda^{-3-2s}-2|x|^{-3-2s}}{|y|^{3+2s}}dy\\
				\geq&\int_{ B_{\lambda}(x)}\frac{-2|x|^{-3-2s}}{|y|^{3+2s}}dy\geq-C_{11}\lambda^{3}(|x|-\lambda)^{-3-2s}|x|^{-3-2s}\geq-C_{12}|x|^{-3-4s}.
			\end{split}
		\end{equation}
		The same method allows us to estimate the integral domains in $B_{\frac{|x|}{3}}(-x)\backslash B_{\lambda}(-x)$ and $B_{\lambda}(-x)$. For $y\in \mathbb{R}^{3}\backslash \left[B_{\frac{|x|}{3}}(0)\cup B_{\frac{|x|}{3}}(x)\cup B_{\frac{|x|}{3}}(-x)\right]$, we have $|x\pm y|\geq\frac{|x|}{3}$ for $|x|>3\lambda$. Thus
		\begin{equation}\label{equ5-19}
			\begin{split}
				&\left|\int_{\mathbb{R}^{3}\backslash \left[B_{\frac{|x|}{3}}(0)\cup B_{\frac{|x|}{3}}(x)\cup B_{\frac{|x|}{3}}(-x)\right]}\frac{|x+y|^{-3-2s}+|x-y|^{-3-2s}-2|x|^{-3-2s}}{|y|^{3+2s}}dy\right|\\
				\leq&C_{13}|x|^{-3-2s}\int_{\mathbb{R}^{3}\backslash B_{\frac{|x|}{3}}(0)}\frac{1}{|y|^{3+2s}}dy\leq C_{14}|x|^{-3-4s}.
			\end{split}
		\end{equation}
		Combining \eqref{equ5-14}-\eqref{equ5-19}, it then follows from the fact $|x|>3\lambda$ that
		\begin{equation*}
			C_{15}^{-1}\lambda^{-2s}|x|^{-3-2s}-C_{15}|x|^{-3-4s}\leq\int_{\mathbb{R}^{3}}\frac{\varphi_{\lambda}(x+y)+\varphi_{\lambda}(x-y)-2\varphi_{\lambda}(x)}{|y|^{3+2s}}dy\leq C_{15}\lambda^{-2s}|x|^{-3-2s}.
		\end{equation*}
		Let $R_{0}>3\lambda$ such that $C_{15}^{-1}\lambda^{-2s}-C_{15}|x|^{-2s}\geq\frac{1}{2}C_{15}^{-1}\lambda^{-2s}$ for $|x|\geq R_{0}$, then \eqref{equ5-13} shows that \eqref{equ5-12} holds, and claim is proved. We now prove \eqref{equ5-11}. Since $u\in L^{\infty}(\mathbb{R}^{3})$, then there exists $M>0$ such that $|u(x)|\leq M$. Take $R\geq R_{0}>3\lambda$ large enough, then $\rho(x)\geq\rho_{0}>0$ for $|x|\geq R$. Choose $\lambda>\left( \frac{C_{1}}{\rho_{0}}\right)^{\frac{1}{2s}}$, where $C_{1}>0$ is given by \eqref{equ5-12}. Define 
		\begin{equation*}
			v(x):=\frac{C_{16}}{|x|^{3+2s}}\quad \mathrm {with} \quad C_{16}\geq\max \left\lbrace\frac{C_{0}}{\rho_{0}-C_{1}\lambda^{-2s}},  R^{3+2s}M \right\rbrace.
		\end{equation*}
		It then follows from \eqref{equ5-12} that
		\begin{equation}\label{equ5-20}
			(-\Delta)^{s}v+\rho(x)v\geq\frac{-C_{1}C_{16}\lambda^{-2s}}{|x|^{3+2s}}+\frac{\rho_{0}C_{16}}{|x|^{3+2s}}\geq\frac{C_{0}}{|x|^{3+2s}}\geq|g(x)|\quad \hbox{for $|x|\geq R$}.
		\end{equation}
		Together with \eqref{equ5-10}, this yields that
		\begin{equation}\label{equ5-21}
			(-\Delta)^{s}\dot{w}(x)+\rho(x)\dot{w}(x)\leq0 \quad \hbox{for $|x|\geq R$},
		\end{equation}
		where $\dot{w}(x):=|u(x)|-v(x)$. Note that
		\begin{equation*}
			v(x)=\frac{C_{16}}{|x|^{3+2s}}\geq\frac{C_{16}}{R^{3+2s}}\geq M\geq|u(x)|\quad \hbox{for $|x|\leq R$},
		\end{equation*}
		then we deduce that $\dot{w}(x)\leq0$ in $B_{R}(0)$. We now claim that $\dot{w}(x)\leq0$ in $B^{c}_{R}(0)$. Otherwise, there exists some global maximum point $x_{1}\in B^{c}_{R}(0)$ satisfying $\dot{w}(x_{1})>0$. It then follows from \eqref{equ5-21} that
		\begin{equation*}
			(-\Delta)^{s}\dot{w}(x_{1})\leq-\rho(x_{1})\dot{w}(x_{1})\leq-\rho_{0}\dot{w}(x_{1})<0.
		\end{equation*}
		However, we have
		\begin{equation*}
			(-\Delta)^{s}\dot{w}(x_{1})=\int_{\mathbb{R}^{3}}\frac{\dot{w}(x_{1})-\dot{w}(y)}{|x-y|^{3+2s}}dy\geq0.
		\end{equation*}
		This is a contradiction. Hence, we infer that $|u(x)|\leq v(x)$ in $\mathbb{R}^{3}$, which further implies that \eqref{equ5-11} holds. The proof of Lemma \ref{lem5-2} is completed.
	\end{proof}
	\begin{lemma}\label{lem5-3}
		Let $\hat{u}_{ik}$ be as in \eqref{equ4-4}, there holds that
		\begin{equation*}
			\int_{\mathbb{R}^{3}}(-\Delta)^{s} \hat{u}_{ik}\frac{\partial\hat{u}_{ik}}{\partial x_{j}}dx=0.
		\end{equation*}
	\end{lemma}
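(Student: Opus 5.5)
The identity is translation invariance of the quadratic form $\frac12\int|(-\Delta)^{\frac s2}u|^2$. Since $(-\Delta)^s$ is self-adjoint and commutes with $\partial_{x_j}$, formally
\[
\int(-\Delta)^s u\,\partial_j u=\int(-\Delta)^{\frac s2}u\,\partial_j(-\Delta)^{\frac s2}u=\tfrac12\int\partial_j\big|(-\Delta)^{\frac s2}u\big|^2=0 .
\]
The proof consists of justifying this for $u=\hat u_{ik}$, using the regularity and decay already available. By Lemma \ref{lem5-1}, $\hat u_{ik}$ and $\nabla\hat u_{ik}$ are bounded by $C(1+|x|)^{-3-2s}$, so both lie in $L^1\cap L^\infty$. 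The $C^{2,\alpha}_{loc}$ bound from the proof of Lemma \ref{lem3-3}, together with the decay, makes $(-\Delta)^s\hat u_{ik}$ a bounded continuous function. Alternatively, one can read it off equation \eqref{equ4-7}: the right-hand side and the potential term are bounded and decay like $|x|^{-3-2s}$. Hence the integrand $(-\Delta)^s\hat u_{ik}\,\partial_j\hat u_{ik}$ is absolutely integrable, and the integral is well defined.

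\emph{Step 1 (Fourier side).} I would work on the Fourier side. Since $\hat u_{ik}\in H^s$ and $\partial_j\hat u_{ik}\in L^2$, Plancherel gives
\[
\int(-\Delta)^s\hat u_{ik}\,\partial_j\hat u_{ik}\,dx=\int|\xi|^{2s}\,\mathcal F\hat u_{ik}(\xi)\,\overline{i\xi_j\mathcal F\hat u_{ik}(\xi)}\,d\xi=-i\int|\xi|^{2s}\xi_j|\mathcal F\hat u_{ik}|^2\,d\xi .
\]
This is legitimate because $(-\Delta)^s\hat u_{ik}\in L^2$: it is bounded and decays, equivalently $|\xi|^{2s}\mathcal F\hat u_{ik}\in L^2$. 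The final expression is absolutely convergent since $|\xi|^{2s+1}|\mathcal F\hat u|^2\le |\xi|^{4s}|\mathcal F\hat u|^2+|\xi|^2|\mathcal F\hat u|^2$. Because $\hat u_{ik}$ is real, $|\mathcal F\hat u_{ik}(\xi)|^2$ is even in $\xi$, while $|\xi|^{2s}\xi_j$ is odd. The integral therefore vanishes.

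\emph{Step 2 (Caffarelli--Silvestre route, as the paper suggests).} As a backup I would use the extension $U(x,t)$ of $\hat u_{ik}$ with $\operatorname{div}(t^{1-2s}\nabla U)=0$ in $\mathbb R^4_+$. Multiply by $\partial_{x_j}U$ and integrate over the half-ball $B_R^+$. The bulk term is $\tfrac12\int t^{1-2s}\partial_j|\nabla U|^2$, which becomes a curved-boundary term. The flat boundary gives $\kappa_s\int_{B_R}(-\Delta)^s\hat u\,\partial_j\hat u$. One then shows that the curved-boundary terms $\int_{\partial^+B_R^+}t^{1-2s}(|\nabla U|^2+|\partial_\nu U\,\partial_jU|)$ tend to $0$ along a sequence $R\to\infty$, using $\int t^{1-2s}|\nabla U|^2<\infty$.

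\emph{Main obstacle.} The only delicate point is justifying the manipulation, i.e.\ the membership $(-\Delta)^s\hat u_{ik}\in L^2$ in Step 1. I would obtain it directly from \eqref{equ4-7} together with \eqref{equ4-12} and \eqref{equ5-1}, which is enough to close the argument.
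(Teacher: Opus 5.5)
Your proof is correct, and your main argument (Step 1) takes a genuinely different route from the paper's. The paper works only through the Caffarelli--Silvestre extension:
\begin{itemize}
\item it applies the divergence theorem to $t^{1-2s}\nabla\tilde{\hat{u}}_{ik}\,\partial_{x_j}\tilde{\hat{u}}_{ik}$ on the truncated half-ball $\Omega^{+}_{R,\frac{1}{R}}$;
\item it turns the bulk term $\frac{1}{2}\int t^{1-2s}\partial_{x_j}|\nabla\tilde{\hat{u}}_{ik}|^{2}$ into a curved-boundary integral;
\item it discards every curved-boundary contribution using an asserted pointwise bound $|\nabla\tilde{\hat{u}}_{ik}(x,t)|\le C|(x,t)|^{-3-2s}$;
\item it lets the flat piece at $t=\frac{1}{R}$ converge to $\int(-\Delta)^{s}\hat{u}_{ik}\,\partial_{j}\hat{u}_{ik}\,dx$.
\end{itemize}

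Your Step 2 is essentially this argument. Your treatment of the curved boundary is more robust, though: finiteness of $\int t^{1-2s}|\nabla U|^{2}$ forces the spherical integrals to vanish along some sequence $R_n\to\infty$. The paper's stated rate cannot hold along the $t$-axis. There $\tilde{\hat{u}}_{ik}(0,t)\sim c\,t^{-3}$ with $c=\beta(s)\int\hat{u}_{ik}>0$, whereas the asserted gradient bound would force $\tilde{\hat{u}}_{ik}(0,t)=O(t^{-2-2s})$, which decays faster since $s>\frac{1}{2}$. A slower, correct rate would still suffice for the paper's estimate.

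Your Step 1 bypasses the extension entirely. You correctly obtain $(-\Delta)^{s}\hat{u}_{ik}\in L^{2}$ from the weak form of \eqref{equ4-7}, using bounded $V$, the bound \eqref{equ4-12}, and $\hat{u}_{ik}\in L^{2}\cap L^{\infty}$. Once you have that, $\partial_{j}\hat{u}_{ik}\in L^{2}$ comes for free, since $\hat{u}_{ik}\in H^{2s}\subset H^{1}$ because $2s>1$, so you do not even need Lemma \ref{lem5-1}. Plancherel then reduces the integral to $-i\int|\xi|^{2s}\xi_{j}|\mathcal{F}\hat{u}_{ik}|^{2}d\xi$. This vanishes by parity, or simply because the left-hand side is real and the right-hand side purely imaginary. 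The route needs no pointwise decay of the extension and no justification of the boundary limit as $t\to0$.

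What the paper's route buys is a single framework that it adapts to the Pohozaev-type identity of Lemma \ref{lem5-4}. There the dilation multiplier $(x-x_{1k})\cdot\nabla\hat{u}_{ik}$ cannot be disposed of by a parity argument on the Fourier side.
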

	\begin{proof}
		We employ the $s$-harmonic extension technique following the approach in \cite{CC}. Deonte
		\begin{equation*}
			\tilde{\hat{u}}_{ik}(x,t)
			=\int_{\mathbb{R}^{3}}
			\mathcal{P}_{s}(x-z, t)
			\hat{u}_{ik}(z)dz \quad\hbox{$(x, t)\in\mathbb{R}^{4}_{+}$ and $i=1, 2$},
		\end{equation*} 
		of solves
		\begin{equation*}
			\begin{cases}
				&	-{\rm div}(t^{1-2s}
				\nabla \tilde{\hat{u}}_{ik})
				=0\quad\hbox{in $\mathbb{R}_{+}^{4}$},\\
				&\tilde{\hat{u}}_{ik}(x,0)
				=\hat{u}_{ik}(x)
				\quad\hbox{on $\mathbb{R}^{3}$},\\
				&-\lim\limits_{t\rightarrow0}t^{1-2s}\partial_{t}\tilde{\hat{u}}_{ik}(x,t)=(-\Delta)^{s}\hat{u}_{ik}\quad\hbox{on $\mathbb{R}^{3}$},
			\end{cases}
		\end{equation*}
		where $i=1, 2$, and
		\begin{equation*}
			\mathbb{R}^{4}_{+}
			:=\{(x, t): x\in\mathbb{R}^{3}, t>0\},
			~~
			\mathcal{P}_{s}:=
			\beta(s)
			\frac{t^{2s}}{(|x|^{2}+t^{2})^{\frac{3+2s}{2}}},
		\end{equation*}
		$\beta(s)$ is a constant such that 
		$\int_{\mathbb{R}^3}
		\mathcal{P}_{s}(x, 1)
		\mathrm{d}x=1$. For $R>1$, set
		\begin{equation*}
			\begin{aligned}
				\Omega_{R, \frac{1}{R}}^{+}
				:=&\left\{(x, t)\in \mathbb{R}^{3}\times\left[ \frac{1}{R}, \infty\right):\: |x|^{2}+t^{2}\leq R^{2}
				\right\}\\
				\partial\Omega_{R, \frac{1}{R}}^{1}
				:=&\left\{(x, t)\in \mathbb{R}^{3}\times\left\lbrace t=\frac{1}{R} \right\rbrace :\: |x|^{2}\leq R^{2}-\frac{1}{R^{2}}
				\right\}\\
				\partial\Omega_{R, \frac{1}{R}}^{2}
				:=&\left\{(x, t)\in \mathbb{R}^{3}\times\left[ \frac{1}{R}, \infty\right):\: |x|^{2}+t^{2}= R^{2}
				\right\}.
			\end{aligned}
		\end{equation*}		
		Clearly, one has
		\begin{equation*}
			\partial\Omega_{R, \frac{1}{R}}^{+}=\partial\Omega_{R, \frac{1}{R}}^{1}\cup\partial\Omega_{R, \frac{1}{R}}^{2}\quad \hbox{and}\quad 
			\nu=
			\left\{
			\begin{array}{ll}
				(0, 0, 0, -1) &\quad \hbox{on $\partial\Omega_{R, \frac{1}{R}}^{1}$},\\
				\frac{(x, t)}{R}&\quad \hbox{on $\partial\Omega_{R, \frac{1}{R}}^{2}$},
			\end{array}
			\right.
		\end{equation*}
		where $\nu$ denotes the unit outward normal vector of $\partial\Omega_{R, \frac{1}{R}}^{+}$. Direct calculation shows that
		\begin{equation*}
			{\rm div}\left( t^{1-2s}\nabla \tilde{\hat{u}}_{ik}\frac{\partial \tilde{\hat{u}}_{ik}}{\partial x_{j}}\right) ={\rm div}\left( t^{1-2s}\nabla \tilde{\hat{u}}_{ik}\right) \frac{\partial \tilde{\hat{u}}_{ik}}{\partial x_{j}}+t^{1-2s}\nabla \tilde{\hat{u}}_{ik}\cdot\nabla\left( \frac{\partial \tilde{\hat{u}}_{ik}}{\partial x_{j}}\right).
		\end{equation*}
		Applying the divergence theorem, it follows that		
		\begin{equation}\label{equ5-22}
			\int_{\Omega_{R, \frac{1}{R}}^{+}}t^{1-2s}\nabla \tilde{\hat{u}}_{ik}\cdot\nabla\left( \frac{\partial \tilde{\hat{u}}_{ik}}{\partial x_{j}}\right)dxdt=\int_{\partial\Omega_{R, \frac{1}{R}}^{+}}t^{1-2s}\frac{\partial \tilde{\hat{u}}_{ik}}{\partial\nu}\frac{\partial \tilde{\hat{u}}_{ik}}{\partial x_{j}}dS,
		\end{equation}			
		where we have used the fact that ${\rm div}(t^{1-2s}
		\nabla \tilde{\hat{u}}_{ik})
		=0$. Similar to the proof of \cite[Proposition 4.1]{CCC}, we know that $\tilde{\hat{u}}_{ik}\in C^{1}(\mathbb{R}_{+}^{4})$. Simple calculation yields that
		\begin{equation*}
			\nabla \tilde{\hat{u}}_{ik}\cdot\nabla\left( \frac{\partial \tilde{\hat{u}}_{ik}}{\partial x_{j}}\right)=\frac{1}{2}\frac{\partial \left( \left| \nabla\tilde{\hat{u}}_{ik}\right| ^{2}\right) }{\partial x_{j}},
		\end{equation*}
		from which and \eqref{equ5-22} we deduce that
		\begin{equation}\label{equ5-23}
			\frac{1}{2}\int_{\Omega_{R, \frac{1}{R}}^{+}}t^{1-2s}\frac{\partial \left( \left| \nabla\tilde{\hat{u}}_{ik}\right| ^{2}\right) }{\partial x_{j}}dxdt=\int_{\partial\Omega_{R, \frac{1}{R}}^{+}}t^{1-2s}\frac{\partial \tilde{\hat{u}}_{ik}}{\partial\nu}\frac{\partial \tilde{\hat{u}}_{ik}}{\partial x_{j}}dS.
		\end{equation}			
		Integrating by parts, one derive that
		\begin{equation}\label{equ5-24}
			\begin{split}
				\int_{\Omega_{R, \frac{1}{R}}^{+}}t^{1-2s}\frac{\partial \left( \left| \nabla\tilde{\hat{u}}_{ik}\right| ^{2}\right) }{\partial x_{j}}dxdt&=\int_{\partial\Omega_{R, \frac{1}{R}}^{+}}t^{1-2s}\left|\nabla\tilde{\hat{u}}_{ik} \right| ^{2}\nu_{j}dS-\int_{\Omega_{R, \frac{1}{R}}^{+}}\left| \nabla\tilde{\hat{u}}_{ik}\right| ^{2}\partial_{x_{j}}(t^{1-2s})dxdt\\
				&=\int_{\partial\Omega_{R, \frac{1}{R}}^{2}}t^{1-2s}\left|\nabla\tilde{\hat{u}}_{ik} \right| ^{2}\nu_{j}dS,
			\end{split}
		\end{equation}
		where we have used the facts that $\partial_{x_{j}}(t^{1-2s})=0$
		and $\nu_{j}=0$ on $\partial\Omega_{R, \frac{1}{R}}^{1}$. Note that
		\begin{equation*}
			\left|(x, t)\right|=R,\quad \left| \nabla\tilde{\hat{u}}_{ik}(x, t)\right|\leq\frac{C}{\left|(x, t)\right|^{3+2s}},  \quad  \hbox{and}\quad t\geq\frac{1}{R}\quad \hbox{on $\partial\Omega_{R, \frac{1}{R}}^{2}$},
		\end{equation*}
		then we have
		\begin{equation}\label{equ5-25}
			\left| \int_{\partial\Omega_{R, \frac{1}{R}}^{2}}t^{1-2s}\left|\nabla\tilde{\hat{u}}_{ik} \right| ^{2}\nu_{j}dS\right| \leq CR^{2s-1}R^{-6-4s}R^{3}\rightarrow0\quad \hbox{as $R\rightarrow\infty$}.
		\end{equation}	
		We proceed to estimate the RHS of \eqref{equ5-23}. Clearly
		\begin{equation}\label{equ5-26}
			\begin{split}
				&	\int_{\partial\Omega_{R, \frac{1}{R}}^{+}}t^{1-2s}\frac{\partial \tilde{\hat{u}}_{ik}}{\partial\nu}\frac{\partial \tilde{\hat{u}}_{ik}}{\partial x_{j}}dS\\
				=&\int_{\partial\Omega_{R, \frac{1}{R}}^{1}}t^{1-2s}\frac{\partial \tilde{\hat{u}}_{ik}}{\partial\nu}\frac{\partial \tilde{\hat{u}}_{ik}}{\partial x_{j}}dS+\int_{\partial\Omega_{R, \frac{1}{R}}^{2}}t^{1-2s}\frac{\partial \tilde{\hat{u}}_{ik}}{\partial\nu}\frac{\partial \tilde{\hat{u}}_{ik}}{\partial x_{j}}dS:=I_{1}+I_{2}.
			\end{split}
		\end{equation}
		Since $\frac{\partial \tilde{\hat{u}}_{ik}}{\partial\nu}=-\frac{\partial \tilde{\hat{u}}_{ik}}{\partial t}$ on $\partial\Omega_{R, \frac{1}{R}}^{1}$, we thus obtain that
		\begin{equation*}
			I_{1}=-\left( \frac{1}{R}\right)^{1-2s}\int_{|x|\leq\sqrt{R^{2}-\frac{1}{R^{2}}}}\frac{\partial \tilde{\hat{u}}_{ik}\left( x, \frac{1}{R}\right) }{\partial t}\frac{\partial \tilde{\hat{u}}_{ik}\left( x, \frac{1}{R}\right) }{\partial x_{j}}dx.
		\end{equation*}	
		Taking $R\rightarrow\infty$, one can see that
		\begin{equation}\label{equ5-27}
			\lim_{R\rightarrow\infty}I_{1}=	\int_{\mathbb{R}^{3}}(-\Delta)^{s} \hat{u}_{ik}\frac{\partial\hat{u}_{ik}}{\partial x_{j}}dx.
		\end{equation}
		Note that
		\begin{equation*}
			\begin{aligned}
				\left| \frac{\partial \tilde{\hat{u}}_{ik}}{\partial\nu}\right| =&\left| \nabla\tilde{\hat{u}}_{ik}\cdot\nu\right| \leq\left| \nabla\tilde{\hat{u}}_{ik}\right| \leq\frac{C}{\left|(x, t)\right|^{3+2s}}\quad \hbox{on $\partial\Omega_{R, \frac{1}{R}}^{2}$},\\
				\left| \frac{\partial \tilde{\hat{u}}_{ik}}{\partial x_{j}}\right| \leq&\left| \nabla\tilde{\hat{u}}_{ik}\right| \leq\frac{C}{\left|(x, t)\right|^{3+2s}}\quad \hbox{and}\quad t\geq\frac{1}{R}\quad  \hbox{on $\partial\Omega_{R, \frac{1}{R}}^{2}$}.
			\end{aligned}
		\end{equation*}		
		We thus derive that
		\begin{equation}\label{equ5-28}
			\left| 	I_{2}\right| \leq CR^{2s-1}R^{-6-4s}R^{3}\rightarrow0\quad \hbox{as $R\rightarrow\infty$}.
		\end{equation}
		Combining \eqref{equ5-23}-\eqref{equ5-28}, we can conclude that
		\begin{equation*}
			\int_{\mathbb{R}^{3}}(-\Delta)^{s} \hat{u}_{ik}\frac{\partial\hat{u}_{ik}}{\partial x_{j}}dx=0,
		\end{equation*}	
		this therefore completes the proof of Lemma \ref{lem5-3}.
	\end{proof}
	
	\begin{lemma}\label{lem5-4}
		Assume that $x_{1k}$ is the unique global maximum point of $u_{1k}$, and $\hat{u}_{ik}$ is given in \eqref{equ4-4}, then there holds that
		\begin{equation*}
			\int_{\mathbb{R}^{3}}(-\Delta)^{s}\hat{u}_{ik}[(x-x_{1k})\cdot\nabla\hat{u}_{ik}]dx=\frac{2s-3}{2} \int_{\mathbb{R}^3} \left| (-\Delta)^{\frac{s}{2}}\hat{u}_{ik}\right| ^2dx.
		\end{equation*}
	\end{lemma}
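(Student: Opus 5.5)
The plan is to mirror the proof of Lemma \ref{lem5-3}, using the Caffarelli--Silvestre extension $\tilde{\hat{u}}_{ik}$ and the same truncated domain $\Omega_{R,\frac1R}^{+}$. Since $(x-x_{1k})\cdot\nabla\hat{u}_{ik}=x\cdot\nabla\hat{u}_{ik}-x_{1k}\cdot\nabla\hat{u}_{ik}$, Lemma \ref{lem5-3} applied in each direction $j$ shows that the term $x_{1k}\cdot\nabla\hat{u}_{ik}$ contributes nothing. It therefore suffices to prove the pure Pohozaev identity
\begin{equation*}
\int_{\mathbb{R}^{3}}(-\Delta)^{s}\hat{u}_{ik}\,(x\cdot\nabla\hat{u}_{ik})\,dx=\frac{2s-3}{2}\int_{\mathbb{R}^3}|(-\Delta)^{\frac{s}{2}}\hat{u}_{ik}|^{2}dx .
\end{equation*}
(Only finiteness of $x_{1k}$ is used here, not the maximum-point property.)

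In the extension I will use the Pohozaev multiplier $Z:=(x,t)\cdot\nabla\tilde{\hat{u}}_{ik}$. Because $\mathrm{div}(t^{1-2s}\nabla\tilde{\hat{u}}_{ik})=0$, there is the Rellich-type identity
\begin{equation*}
\mathrm{div}\Big(t^{1-2s}Z\nabla\tilde{\hat{u}}_{ik}-\tfrac12 t^{1-2s}|\nabla\tilde{\hat{u}}_{ik}|^{2}(x,t)\Big)=-\tfrac{4-2+(1-2s)}{2}\,t^{1-2s}|\nabla\tilde{\hat{u}}_{ik}|^{2}=\frac{2s-3}{2}\,t^{1-2s}|\nabla\tilde{\hat{u}}_{ik}|^{2}.
\end{equation*}
The coefficient comes from $\mathrm{div}(t^{1-2s}(x,t))=(4+1-2s)t^{1-2s}$ combined with $t^{1-2s}\nabla\tilde u\cdot\nabla Z=t^{1-2s}|\nabla\tilde u|^2+\tfrac12 t^{1-2s}(x,t)\cdot\nabla|\nabla\tilde u|^2$. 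Integrating this over $\Omega_{R,\frac1R}^{+}$ and applying the divergence theorem gives a bulk term and two boundary pieces. On the spherical cap $\partial\Omega^{2}_{R,\frac1R}$ I will use the decay $|\nabla\tilde{\hat{u}}_{ik}|\le C|(x,t)|^{-3-2s}$ already invoked in Lemma \ref{lem5-3}, together with $|(x,t)|=R$. This bounds the cap contribution by $CR^{2s-1}\cdot R\cdot R^{-6-4s}R^{3}\to0$.

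On the flat piece $t=\frac1R$ the outward normal is $(0,0,0,-1)$. The boundary integrand becomes
\begin{equation*}
-t^{1-2s}\partial_t\tilde u\,(x\cdot\nabla_x\tilde u+t\partial_t\tilde u)+\tfrac12 t^{2-2s}|\nabla\tilde u|^{2}\Big|_{t=1/R}.
\end{equation*}
As $R\to\infty$, the first part converges to $\int(-\Delta)^s\hat{u}_{ik}\,(x\cdot\nabla\hat{u}_{ik})\,dx$. The remaining terms carry an extra factor $t$ and should vanish: $t^{2-2s}(\partial_t\tilde u)^2=t^{2s}\,(t^{1-2s}\partial_t\tilde u)^2\to0$ since $t^{1-2s}\partial_t\tilde u$ stays bounded, and $t^{2-2s}|\nabla_x\tilde u|^2\to0$ since $\nabla_x\tilde u$ stays bounded by the $C^{1,\alpha}$ regularity of $\hat{u}_{ik}$ and Lemma \ref{lem5-1}. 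Finally, the bulk term converges to $\frac{2s-3}{2}\int_{\mathbb{R}^4_+}t^{1-2s}|\nabla\tilde{\hat{u}}_{ik}|^2$, which equals $\frac{2s-3}{2}\int|(-\Delta)^{s/2}\hat{u}_{ik}|^2$ after normalizing constants, exactly as the paper drops the constant in the Dirichlet-to-Neumann map. Matching signs gives the identity.

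The main obstacle is justifying the limits on the flat boundary $t=1/R$. This requires uniform-in-$x$ control of $t^{1-2s}\partial_t\tilde u(x,t)\to-(-\Delta)^s\hat u_{ik}$ and of $\nabla_x\tilde u(x,t)$ near $t=0$, together with integrable decay in $x$ so that dominated convergence applies with the weight $|x|$. I would first obtain this decay by convolving the Poisson kernel with the bounds of Lemma \ref{lem5-1}, yielding $|\nabla\tilde u|\lesssim(1+|x|)^{-3-2s}$. I would then use $(-\Delta)^s\hat u_{ik}\in L^\infty$ with polynomial decay, which follows from \eqref{equ4-7}, \eqref{equ4-12} and Lemma \ref{lem5-1}, together with the regularity result of \cite{CCC} as in Lemma \ref{lem5-3}.
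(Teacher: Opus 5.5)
Your proposal is correct and is essentially the paper's argument. The paper uses the same Caffarelli--Silvestre extension and Rellich-type field, but it centres the half-ball at $x_{1k}$ directly instead of reducing to $x_{1k}=0$ via Lemma \ref{lem5-3}. It also adds $\frac{3-2s}{2}t^{1-2s}\tilde{\hat{u}}_{ik}\nabla\tilde{\hat{u}}_{ik}$ to make the field divergence-free, so the energy appears as the flat-boundary term $\frac{3-2s}{2}\int_{\mathbb{R}^3}\hat{u}_{ik}(-\Delta)^{s}\hat{u}_{ik}\,dx$ rather than as your bulk term. That is the same computation rearranged.

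One caveat, which you share with the paper, concerns the cap estimate. The pointwise bound $|\nabla\tilde{\hat{u}}_{ik}|\le C|(x,t)|^{-3-2s}$ fails in the $t$-direction. For example, $\tilde{\hat{u}}_{ik}(0,t)\sim t^{-3}\int_{\mathbb{R}^3}\hat{u}_{ik}$, so the gradient only decays like $|(x,t)|^{-4}$ there. This weaker decay still makes the cap contribution $O(R^{2s-5})$, so your conclusion stands.
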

	\begin{proof}
		Inspired by Lemma \ref{lem5-3}, for $R > 1$, we define 
		\begin{align*}
			D^+_{R, \frac{1}{R}} &:= \left\lbrace (x,t) \in \mathbb{R}^3 \times \left[\frac{1}{R} , \infty\right) : |x - x_{1k}|^2 + t^2 \le R^2\right\rbrace  \\
			\partial D^1_{R, 1/R} &:= \left\lbrace (x,t) \in \mathbb{R}^3 \times \left\lbrace t =\frac{1}{R} \right\rbrace  : |x - x_{1k}|^2 \le R^2 - 1/R^2\right\rbrace , \\
			\partial D^2_{R, 1/R} &:= \left\lbrace (x,t) \in \mathbb{R}^3 \times \left[\frac{1}{R}, \infty\right)  : |x - x_{1k}|^2 + t^2 = R^2\right\rbrace .
		\end{align*}
		In the same way, we have
		\begin{equation*}
			\partial D_{R, \frac{1}{R}}^{+}=\partial D_{R, \frac{1}{R}}^{1}\cup\partial D_{R, \frac{1}{R}}^{2}\quad \hbox{and}\quad 
			\nu=
			\left\{
			\begin{array}{ll}
				(0, 0, 0, -1) &\quad \hbox{on $\partial D_{R, \frac{1}{R}}^{1}$},\\
				\frac{(x-x_{1k}, t)}{R}&\quad \hbox{on $\partial D_{R, \frac{1}{R}}^{2}$},
			\end{array}
			\right.
		\end{equation*}
		Let $X = (x - x_{1k}, t)$ and define
		\begin{equation*}
			\mathbf{P} := t^{1-2s} \left[ (X \cdot \nabla \tilde{\hat{u}}_{ik}) \nabla \tilde{\hat{u}}_{ik} - \frac{1}{2} |\nabla \tilde{\hat{u}}_{ik}|^2 X \right] + \frac{3-2s}{2} t^{1-2s} \tilde{\hat{u}}_{ik} \nabla \tilde{\hat{u}}_{ik}.
		\end{equation*}
		Direct calculation shows that
		\begin{equation*}
			\text{div}  \mathbf{P} = \left( s - \frac{3}{2}  + \frac{3-2s}{2} \right) t^{1-2s} |\nabla \tilde{\hat{u}}_{ik}|^2 = 0,
		\end{equation*}
		where we have used the facts ${\rm div}(t^{1-2s}
		\nabla \tilde{\hat{u}}_{ik})
		=0$, and
		\begin{align*}
			\text{div} \left( t^{1-2s} \left[ (X \cdot \nabla \tilde{\hat{u}}_{ik}) \nabla \tilde{\hat{u}}_{ik}- \frac{1}{2} |\nabla \tilde{\hat{u}}_{ik}|^2 X \right] \right) &= \left(s - \frac{3}{2}\right) t^{1-2s} |\nabla \tilde{\hat{u}}_{ik}|^2, \\
			\text{div} \left( \frac{3-2s}{2} t^{1-2s} \tilde{\hat{u}}_{ik} \nabla \tilde{\hat{u}}_{ik} \right) &= \frac{3-2s}{2} t^{1-2s} |\nabla \tilde{\hat{u}}_{ik}|^2.
		\end{align*}
		By divergence Theorem, we have
		\begin{equation*}
			\int_{\partial D^+_{R, \frac{1}{R}}} \mathbf{P} \cdot \nu \, dS =  \int_{D^+_{R, \frac{1}{R}}} \text{div}\mathbf{P} \, dxdt=0.
		\end{equation*}	
		The boundary integral can be divided into two parts as follows
		\begin{equation*}
			\int_{\partial D^+_{R, \frac{1}{R}}} \mathbf{P} \cdot \nu \, dS = \int_{\partial D^1_{R, \frac{1}{R}}} \mathbf{P} \cdot \nu \, dS + \int_{\partial D^2_{R, \frac{1}{R}}} \mathbf{P} \cdot \nu \, dS.
		\end{equation*}
		Since $\nu = (0, 0, 0, -1)$ on $\partial D_{R, \frac{1}{R}}^1$, we then deduce from the definition of $\mathbf{P}$ that
		\begin{equation*}
			\mathbf{P} \cdot \nu = -t^{1-2s} \left[ (X \cdot \nabla  \tilde{\hat{u}}_{ik}) \partial_t  \tilde{\hat{u}}_{ik} - \frac{1}{2} |\nabla  \tilde{\hat{u}}_{ik}|^2 t \right] -\frac{3-2s}{2} t^{1-2s}  \tilde{\hat{u}}_{ik} \partial_t  \tilde{\hat{u}}_{ik}.
		\end{equation*}
		As $R\to\infty$, $t\to 0^+$, and  $X = (x-x_0, t) \to (x-x_0, 0)$, it is easy to check that
		\begin{equation*}
			\begin{split}
				\lim_{t \to 0^{+}} - (X \cdot \nabla \tilde{\hat{u}}_{ik}) (t^{1-2s} \partial_t \tilde{\hat{u}}_{ik}) 
				&= - ((x-x_0) \cdot \nabla_x \hat{u}_{ik}) \cdot \left( -(-\Delta)^s \hat{u}_{ik} \right) \notag \\
				&=((x-x_0) \cdot \nabla \hat{u}_{ik}) (-\Delta)^s \hat{u}_{ik}.
			\end{split}
		\end{equation*}
		Since $s < 1$, one can see that $\lim_{t \to 0^{+}} \frac{1}{2} t^{2-2s} |\nabla \tilde{\hat{u}}_{ik}|^2 = 0$. Moreover, we have 
		\begin{align*}
			\lim_{t \to 0^{+}} - \frac{3-2s}{2}  \tilde{\hat{u}}_{ik} (t^{1-2s} \partial_t \tilde{\hat{u}}_{ik}) 
			= - \frac{3-2s}{2} \hat{u}_{ik} \cdot \left( - (-\Delta)^s \hat{u}_{ik} \right)  
			=  \frac{3-2s}{2} \hat{u}_{ik} (-\Delta)^s\hat{u}_{ik}.
		\end{align*}  
		Using the above facts, we obtain
		\begin{align*}
			\lim_{R \to \infty} \int_{\partial D_{R, \frac{1}{R}}^1} \mathbf{P} \cdot \nu dS= \int_{\mathbb{R}^3} \left[\left((x-x_0) \cdot \nabla \hat{u}_{ik}\right) (-\Delta)^s \hat{u}_{ik} + \frac{3-2s}{2} \hat{u}_{ik}  (-\Delta)^s \hat{u}_{ik} \right] dx.
		\end{align*}  
		Note that
		\begin{equation*}
			|X \cdot \nabla \tilde{\hat{u}}_{ik}| \le R|\nabla \tilde{\hat{u}}_{ik}|\quad   \hbox{and}\quad t\geq\frac{1}{R}\quad \hbox{on $\partial D_{R, \frac{1}{R}}^{2}$},
		\end{equation*}	
		then using the decay property of $| \tilde{\hat{u}}_{ik}|$
		and $|\nabla  \tilde{\hat{u}}_{ik}|$, we proceed with the integral estimation
		\begin{align*}
			&	\left| \int_{\partial D_{R, \frac{1}{R}}^2} \mathbf{P} \cdot \nu \, dS \right| \\
			= & \left| \int_{\partial D_{R, \frac{1}{R}}^2} \left( t^{1-2s} \left[ \frac{(X \cdot \nabla \tilde{\hat{u}}_{ik})^2}{R} - \frac{R}{2} |\nabla \tilde{\hat{u}}_{ik}|^2 \right] + \frac{3-2s}{2R} t^{1-2s} \tilde{\hat{u}}_{ik} (X \cdot \nabla \tilde{\hat{u}}_{ik}) \right) dS \right| \\
			\le& \int_{\partial D_{R, \frac{1}{R}}^2} t^{1-2s} \left( \frac{(R^{2} |\nabla \tilde{\hat{u}}_{ik}|)^2}{R} + \frac{R}{2} |\nabla \tilde{\hat{u}}_{ik}|^2 + C |\tilde{\hat{u}}_{ik}| |\nabla \tilde{\hat{u}}_{ik}| \right) dS \\
			=& \int_{\partial D_{R, \frac{1}{R}}^2} t^{1-2s} \left( \frac{3}{2} R |\nabla\tilde{\hat{u}}_{ik}|^2 + C |\tilde{\hat{u}}_{ik}| |\nabla \tilde{\hat{u}}_{ik}| \right) dS \\
			\le&  C \left( R^{-3-2s} + R^{-4-2s} \right),
		\end{align*}
		which implies that
		\begin{equation*}
			\lim_{R \to \infty} \left| \int_{\partial D_{R, \frac{1}{R}}^2} \mathbf{P} \cdot \nu \, dS \right| = 0.
		\end{equation*} 
		Till now, one can conclude that 
		\begin{equation*}
			\int_{\mathbb{R}^{3}}(-\Delta)^{s}\hat{u}_{ik}[(x-x_{1k})\cdot\nabla\hat{u}_{ik}]dx=\frac{2s-3}{2} \int_{\mathbb{R}^3} \left| (-\Delta)^{\frac{s}{2}}\hat{u}_{ik}\right| ^2dx,
		\end{equation*}
		this therefore completes the proof of Lemma \ref{lem5-4}.	
	\end{proof}
	
	\text{\bf Acknowledgements}
	L.T. Liu is supported by the Postdoctoral Fellowship Program of CPSF under Grant Number GZB20250713, the National Natural Science Foundation of China (No. 12501230), the Natural Science Foundation of Shanxi Province (No. 202303021211056) and the Scientific and Technical Innovation Team for Young Scholars in Universities of Shandong Province (No. 2025KJG015).

	\text{\bf Data availability}
	All data, models, and code generated or used during the study appear in the submitted article.

\end{document}